\newcommand{\R}{\mathbb{R}}
\newcommand{\Z}{\mathbb{Z}}
\newcommand{\C}{\mathbb{C}}
\newcommand{\IP}{\mathbb{P}}
\newcommand{\N}{\mathbb{N}}
\newcommand{\D}{\mathbb{D}}
\def\sfA{{\sf A}}
\def\sfB{{\sf B}}
\def\sfC{{\sf C}}
\def\sfa{{\sf a}}
\def\sfb{{\sf b}}
\def\sfc{{\sf c}}
\newcommand{\rs}{\mbox{$\widehat{\C}$}}
\def\TTT{{\mathcal T}}
\def\GGG{{\mathcal G}}
\def\HHH{{\mathcal H}}
\newcommand{\bdry}{\partial}                     
\newcommand{\id}{\mbox{\rm id}}                  
\newcommand{\Imag}{\mbox{\rm Im}}                    
\newcommand{\Real}{\mbox{\rm Re}}                    
\newcommand{\Aut}{\mbox{\rm Aut}}                        
\newcommand{\Mod}{\mbox{\rm Mod}}    
\newcommand{\intersect}{\cap}                    
\newcommand{\union}{\cup}                        
\newcommand{\mtwo}[4]                            
{\mbox{$\left(\begin{array}{cc}                  
#1 & #2 \\
#3 & #4 
\end{array}
\right)$}}
\newcommand{\dettwo}[4]                          
{\mbox{$\left|\begin{array}{cc}                  
#1 & #2 \\
#3 & #4 
\end{array}
\right|$}}
\newcommand{\pf}{\noindent {\bf Proof: }}
\newcommand{\be}{\begin{enumerate}}
\newcommand{\eb}{\end{enumerate}}
\newcommand{\bi}{\begin{itemize}}
\newcommand{\ib}{\end{itemize}}
\newcommand{\bl}{\begin{list}}
\newcommand{\lb}{\end{list}}
\newcommand{\gap}{\vspace{5pt}}                 
\newcommand{\Fix}{\mbox{\rm Fix}}
\newcommand{\Homeo}{\mbox{Homeo}}
\newcommand{\genby}[1]{\mbox{$\langle #1 \rangle$}}
\newcommand{\Crit}{\mbox{\rm Crit}}
\newtheorem{proposition}{\bf Proposition}
\newtheorem{theorem}{\bf Theorem}
\newtheorem{lemma}{\bf Lemma}
\newtheorem{corollary}{\bf Corollary}
\newcommand{\portrait}{\mathbf{CP}}
\newcommand{\TM}{\mathbf{TM}}
\newcommand{\ABC}{\mathbf{ABC}}
\newcommand{\ABD}{\mathbf{ABD}}
\newcommand{\BC}{\mathbf{BC}}  
\newcommand{\BD}{\mathbf{BD}}
\newcommand{\PMod}{\mbox{\rm PMod}} 
\renewcommand{\P}{\mathbb{P}}
\title{On the classification of critically fixed rational maps}
\subjclass[2010]{MSC Primary 37F20, Secondary 05C10, 57M12, 57M15, 20E08 }
\keywords{Thurston map; branch data; wreath recursion}
\email{kcordwell@gmail.com}
\address{ %
  K.~Cordwell,360 W 43rd St, Apt S8E, 
New York, NY 10036}
\email{sjg74@nau.edu}
\address{Selina Gilbertson, 400 E McConnell Dr \#72, Flagstaff AZ  86001}
\email{nknuecht@umich.edu}
\address{711 Catherine St., 
Ann Arbor, MI 48104}
\email{pilgrim@indiana.edu}
\address{Dept. Math., Indiana University, Bloomington, IN 47405}
\email{S.Pinella@sms.ed.ac.uk}
\address{1713 Tanglewood Dr., 
Loveland, OH 45140}
\begin{document}

\maketitle

\begin{center}
\today
\end{center}

\begin{abstract}
We discuss the dynamical, topological, and algebraic classification of rational maps $f: \rs \to \rs$ each of whose critical points $c$ is also a fixed-point of $f$, i.e. $f(c)=c$.\\
\end{abstract} 

\tableofcontents 

\section{Introduction}

Fix an integer $d \geq 2$, and let $f(z)=\frac{p(z)}{q(z)} \in \C(z)$ be a rational function of degree $d$. Regarded as a self-map of the complex projective line $f: \IP^1 \to \IP^1$, its iterates yield a holomorphic dynamical system. Let 
\[ \C(f):=\{c_1, \ldots, c_n\} \]
denote the set of critical points of $f$, i.e points at which $f$ is not locally injective.  The orbits of these few critical points under iteration of $f$ significantly influence the global dynamical behavior of $f$; see e.g. \cite{milnor:dynamics}. Counted with multiplicity, there are $2d-2$ critical points.  The set $\Fix(f)$ of fixed-points, counted similarly with multiplicity, has $d+1$ elements.

In this article, we consider the classification of those rational maps $f$ for which $\C(f) \subset \Fix(f)$, so that each critical point of $f$ is also a fixed-point of $f$.  Our investigations generalize those of Tischler \cite{tischler:smale}, who considered the special case of polynomials.  

Each  critically fixed rational map $f$ is also a member of the larger set of so-called {\em Thurston maps} \cite{DH1}.  Thurston maps, by definition, are orientation-preserving branched coverings $f: S^2 \to S^2$ of degree at least two for which the set 
$Z(f):=\union_{n \geq 0} f^n\C(f)$ is finite.  A crude, set-theoretic invariant of a Thurston map is its 
critical orbit portrait 
\[ (Z(f) \stackrel{f}{\longrightarrow} Z(f), Z(f) \stackrel{\deg(f, \cdot)}{\longrightarrow} \N).\]  This invariant records the dynamics and local degree of $f$ on the set $Z(f)$. One may also consider abstract critical orbit portraits 
$(Z \to Z, Z(f) \stackrel{\deg()}{\longrightarrow} \N)$, where points $z \in Z$ with $\deg(z)>1$ are regarded as ``critical'', and where suitable admissibility conditions are satisfied; see \S \ref{subsecn:cop} for the precise formulation. 

Recall that the multiplicity $m$ of a critical point $c$ is given by $m=\deg(f,c)-1$. Our first main result is 

\begin{theorem}
\label{thm:portrait_characterization}
Suppose $(Z \to Z, Z \to \N)$ is a critical orbit portrait of degree $d \geq 2$ consisting of $n$ fixed critical points $c_1, \ldots, c_n$, each of multiplicity $m_i \leq d-1$, and satisfying the Riemann-Hurwitz relation $\sum_{i=1}^n m_i=2d-2$.

Then there is a rational function realizing this critical orbit portrait if and only if $n \leq d$. 
\end{theorem}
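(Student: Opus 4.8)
The plan is to prove the two implications by quite different means: that realizability forces $n\le d$ follows quickly from the holomorphic fixed-point formula, while the converse goes through Thurston's characterization theorem and is where essentially all of the work lies.

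\emph{Realizability $\Rightarrow n\le d$.} Let $f$ be a rational map of degree $d\ge2$ realizing the portrait. Each $c_i$ is a critical fixed point, so $f'(c_i)=0$; writing $f(z)=c_i+a_i(z-c_i)^{m_i+1}+\cdots$ in a local coordinate, $f(z)-z$ vanishes to order exactly one at $c_i$, so $c_i$ is a simple fixed point with holomorphic fixed-point index $\iota(f,c_i)=1/(1-f'(c_i))=1$. Since $f\neq\id$, the rational fixed-point formula gives $\sum_{f(z)=z}\iota(f,z)=1$. The $n$ distinct points $c_1,\dots,c_n$ are simple fixed points, hence account for $n$ of the $d+1$ fixed points counted with multiplicity, so $n\le d+1$; and if $n=d+1$ they are all of them, forcing $1=\sum\iota(f,z)=n=d+1$, impossible for $d\ge2$. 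Therefore $n\le d$.

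\emph{$n\le d\Rightarrow$ realizability; Step 1, a topological model.} I would construct an orientation-preserving branched cover $F\colon S^2\to S^2$ of degree $d$ with $\Crit(F)=\{c_1,\dots,c_n\}$, $\deg(F,c_i)=m_i+1$, and $F(c_i)=c_i$ for all $i$. That a branched cover with this local data exists at all is automatic from Hurwitz theory: choose permutations $\sigma_1,\dots,\sigma_n\in S_d$ of cycle types $(m_i+1,1^{\,d-m_i-1})$ with $\sigma_1\cdots\sigma_n=\id$ generating a transitive subgroup --- the wreath recursion of the prospective map --- and note that $\chi=2d-(2d-2)=2$ forces the total space to be a sphere; post-composing with a homeomorphism that matches the unique ramified preimage of each $c_i$ to $c_i$ makes each $c_i$ a fixed point. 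To prepare Step 2, however, I would instead build $F$ by a ``blow-up'' surgery --- successively blowing up arcs joining fixed points, beginning from a power map --- engineered so that $F$ carries a connected, $F$-invariant finite graph $G\subset S^2$ whose vertex set is $\{c_1,\dots,c_n\}$, with the valence at $c_i$ governed by $m_i$. The inequality $n\le d$ is exactly the condition under which such a connected graph, with the edge/valence budget forced by $d$ and the $m_i$, can be realized in the plane. Since every $c_i$ is fixed, $P_F=\Crit(F)=\{c_1,\dots,c_n\}$ is finite, so $F$ is a Thurston map.

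\emph{Step 2, no obstruction.} If $n=2$ the portrait forces $m_1=m_2=d-1$ and is realized outright by $z\mapsto z^d$, so assume $n\ge3$. Each post-critical point is fixed of local degree $\ge2$, which forces its orbifold weight to be $\infty$; hence the orbifold of $F$ is a sphere with $n\ge3$ punctures and is hyperbolic, and Thurston's characterization theorem applies: $F$ is equivalent to a rational map if and only if it admits no Thurston obstruction, i.e.\ no $F$-invariant multicurve $\Gamma$ in $S^2\setminus P_F$ with leading eigenvalue $\lambda(\Gamma)\ge1$. I would rule this out using $G$: the complementary components of the connected graph $G$ contain no post-critical points, so any curve isotopic into one of them is inessential, and a direct analysis of how $F$ transports the pieces that an invariant multicurve cuts out relative to $G$ --- together with the valence/face count tied to $n\le d$ --- shows that $\lambda(\Gamma)<1$ unless $\Gamma=\emptyset$. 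Thus $F$, and with it the given portrait, is realized by a rational map.

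\emph{Principal obstacle.} The forward implication is short. The substance is the converse, and within it the genuinely hard part is Step 2: verifying that no Thurston obstruction survives. This is the only place where $n\le d$ is used on this side, and it must be used, since for $d<n\le2d-2$ a topological critically-fixed branched cover still exists (by the Hurwitz argument of Step 1) yet is not rational by the forward implication. I expect the delicate point to be the combinatorial bookkeeping linking the invariant graph $G$ --- its $n$ vertices of prescribed valences, its edges, and its complementary faces --- to the bound $n\le d$, and deducing from it the absence of invariant multicurves with $\lambda(\Gamma)\ge1$.
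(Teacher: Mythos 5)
Your necessity argument is exactly the paper's: the Holomorphic Fixed-Point Formula, the observation that a critically fixed postcritically finite map has all fixed points simple with index $1/(1-0)=1$ at each $c_i$, and the count against the $d+1$ fixed points. That half is complete and correct. Your sufficiency strategy is also the paper's (blow up a connected planar multigraph with vertex set $\{c_1,\dots,c_n\}$ and valences $m_i$, then invoke Thurston's theorem), but as written it has a genuine gap: the sentence ``the inequality $n\le d$ is exactly the condition under which such a connected graph \dots can be realized in the plane'' is precisely Theorem~\ref{thm:graphtheory} of the paper, and it is the main new technical content of the whole argument. The easy half is that connectedness forces $\sum_i m_i \ge 2(n-1)$, i.e.\ $n\le d$; the hard half — that every admissible partition $2d-2=m_1+\cdots+m_n$ with $n\le d$ is the valence sequence of a \emph{connected planar} multigraph — requires the double induction of Theorem~\ref{theorem:ptomg} (base case $n=d$, then an induction on $d$ using the ``buffer vertex'' lemma to shuttle valence between vertices without destroying planarity). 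You cannot assert this; it is where the theorem is actually proved.

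You have also inverted where both the hypothesis and the difficulty live. You claim $n\le d$ is used only in Step~2 (ruling out obstructions) and that Step~2 is the hard part. In fact $n\le d$ enters only in Step~1, to guarantee the connected graph exists; once $\GGG$ is connected, the absence of obstructions is independent of $n$ versus $d$ and follows directly from the blow-up machinery of \cite{kmp-tan:blow}: every edge of $\GGG$ is blown up at least once, so up to homotopy no Thurston obstruction can intersect an edge, and since $\GGG$ is connected and $P(F)=V(\GGG)$, any curve disjoint from $\GGG$ is inessential or peripheral. Your proposed Step~2 analysis (``valence/face count tied to $n\le d$'') would not be needed and, as a mechanism for making $n\le d$ bite, would fail — the counterexamples with $d<n\le 2d-2$ are excluded not because their blown-up graphs are obstructed but because no connected graph with those valences exists in the first place (only disconnected ones, as in Theorem~\ref{thm:genrealizable}, whose blow-ups are obstructed by the curve separating components). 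One further small correction: the blow-up construction starts from the identity map with marked set $V(\GGG)$, not from a power map.
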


The necessity of Theorem \ref{thm:portrait_characterization} follows easily from the Holomorphic Fixed-Point Formula. 
We prove sufficiency by applying Thurston's topological characterization of rational functions among Thurston maps \cite{DH1} to the set of Thurston maps obtained by certain surgery procedure on a planar multigraph.   
For our purposes, a {\em planar multigraph $\GGG$} is a simple graph $S\GGG$ embedded in the plane, without loops or multiple edges, and with edges given a positive integer weight indicating the number of edges joining the two incident vertices. The ``blowing up'' surgery of \cite{kmp-tan:blow} assigns, to each such multigraph $\GGG$, a Thurston map $F_\GGG$.   Moreover, $F_\GGG$ is combinatorially equivalent to a rational map $f_\GGG$ if and only if $\GGG$ is connected (\cite[\S 5.2]{kmp-tan:blow}).  By construction, the set of vertices $V(\GGG)$ is in bijective correspondence with the set of critical points $\Crit(f_\GGG)$; the valence of a vertex is the multiplicity of the corresponding critical point.

Sufficiency is then deduced from the following purely-graph theoretic result:

\begin{theorem}
\label{thm:graphtheory}
Fix $d \geq 3$.  Any partition
\[ 2d-2 = m_1 + \ldots + m_n\]
with $n \leq d$ and $1 \leq m_i \leq d$ for each $i=1, \ldots, n$ arises as the set of valences of a connected planar multigraph.
\end{theorem}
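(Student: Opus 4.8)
\noindent\emph{Proof proposal.} First observe that no $m_i$ can equal $d$: a vertex of valence $d$ would carry all $d$ of its edge-ends on edges to the other vertices, whose valences total only $(2d-2)-d=d-2<d$. So we may assume $m_1\ge\cdots\ge m_n$ with $m_i\le d-1$; then $n\ge 2$ (else $m_1=2d-2\le d-1$ forces $d\le 1$) and $m_1\ge 2$ (all $m_i=1$ gives $n=2d-2>d$). The plan is to induct on $d$, peeling off one vertex per step, and to keep every graph planar by attaching only \emph{pendant} vertices: a new vertex joined to a chosen vertex $v$ by an edge of any weight — drawn as parallel arcs in a face at $v$ — can always be adjoined to a planar multigraph without spoiling planarity or connectedness.

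I would first treat the configurations that do not recurse, which also cover the small cases. \textbf{Star:} if $m_1=d-1$ (in particular whenever $n=2$, forcing $m_1=m_2=d-1$), join $v_1$ to each $v_i$, $i\ge 2$, by an edge of weight $m_i$; then $v_1$ has valence $\sum_{i\ge 2}m_i=(2d-2)-(d-1)=d-1=m_1$, and the graph is a tree. \textbf{Triangle:} if $n=3$ and $m_1\le d-2$, weight the edges $v_1v_2,v_2v_3,v_3v_1$ of a triangle by $d-1-m_3,\ d-1-m_1,\ d-1-m_2$; these are positive integers (positivity being $m_i\le d-2$, and the inequality $m_1<m_2+m_3$ required for $d-1-m_1\ge 1$ being automatic since $m_2+m_3=(2d-2)-m_1\ge d$) that sum to $d-1$, and the valences come out $m_1,m_2,m_3$. \textbf{Cycle:} if all $m_i$ equal a common $m$ and $n\ge 3$ (forcing $m\in[2,d-2]$), use the $n$-cycle through $v_1,\dots,v_n$ with edge-weights alternating $1,m-1$ if $n$ is even, or all equal to $m/2$ if $n$ is odd (then $m$ is even); each vertex has valence $m$ and the weights total $nm/2=d-1$.

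The one recursive case is $n\ge 4$, $m_1\le d-2$, the $m_i$ not all equal, so $m_1>m_n$; I would pass to the reduced instance on $v_1,\dots,v_{n-1}$ got by lowering $m_1$ to $m_1-m_n\ge 1$ and keeping $m_2,\dots,m_{n-1}$, at degree $d':=d-m_n$. It is admissible: the parts sum to $2d'-2$ and lie in $[1,d'-1]$ (using $m_1-m_n\le d'-1\Leftrightarrow m_1\le d-1$, and for $2\le i\le n-1$ that $m_i\le m_2\le d-1-m_n=d'-1$, which follows from $2m_2+(n-2)m_n\le\sum m_i=2d-2$ and $n\ge 4$); moreover $d'\ge 2$ since $m_n\le m_1\le d-2$, and $n-1\le d'$, which reads $n\le d$ if $m_n=1$ and, if $m_n\ge 2$, follows from $nm_n\le 2d-2$ and the identity $m_n(d-m_n+1)-(2d-2)=(m_n-2)(d-m_n-1)\ge 0$. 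Applying the inductive hypothesis and then adjoining $v_n$ as a pendant at $v_1$ with edge-weight $m_n$ raises $v_1$ to valence $(m_1-m_n)+m_n=m_1$ and gives $v_n$ valence $m_n$, keeping the graph connected and planar.

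I expect the real difficulty to be planarity — it is why one cannot just quote that every admissible degree sequence is realized by some multigraph. A more economical reduction (dropping two valences by one and reinserting an edge, or subdividing an edge with a new vertex) would demand that two prescribed vertices of the smaller graph lie on a common face, which the induction does not control. Sticking to pendant attachments avoids this but forces each step to remove a whole edge at $v_n$, and that rigidity is exactly what ejects the ``all-equal,'' small-$n$, and $m_1=d-1$ configurations from the induction, requiring the cycle, triangle, and star built by hand above. Everything else is the routine bundle of inequalities confirming the reduced instance is still admissible.
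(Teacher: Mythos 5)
Your argument is correct, but it is organized quite differently from the paper's. The paper restates the claim in terms of Young diagrams (Theorem \ref{theorem:ptomg}) and builds \emph{upward}: it first settles the extremal case $n=d$ by induction on $n$ (a new row of size two is realized by subdividing an existing edge at a new vertex; a new row of size one plus an extra box elsewhere, by hanging a leaf), and then for fixed $n$ passes from $d$ to $d+1$ by adding two boxes to the diagram. The technical heart there is a ``buffer vertex'' lemma that moves one unit of valence from a vertex to an adjacent one across a common face, iterated along a shortest path so that the valences of two \emph{arbitrarily prescribed} vertices can each be raised by one without losing planarity. You instead work \emph{downward}, deleting the minimum part $m_n$ as a pendant vertex joined to $v_1$ by $m_n$ parallel edges, which lowers the degree to $d-m_n$ and the number of parts to $n-1$; since pendant attachment never threatens planarity, you avoid the buffer-vertex machinery entirely, at the cost of having to realize by hand the configurations the recursion cannot reach (star when $m_1=d-1$, weighted triangle when $n=3$, weighted cycle when all parts are equal). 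I have checked the admissibility inequalities at your reduction step, including the identity $m_n(d-m_n+1)-(2d-2)=(m_n-2)(d-m_n-1)$ giving $n-1\le d-m_n$, and the fact that $d-m_n\ge 3$ holds automatically because the reduced partition has at least three parts; they are all correct, so the induction closes. One caveat about your opening line: the observation that a loopless multigraph has no vertex of valence $d$ is right, but it does not entitle you to ``assume'' $m_i\le d-1$ --- it shows that the bound $m_i\le d$ in the statement must be read as $m_i\le d-1$ (the bound appearing in Theorem \ref{thm:portrait_characterization} and in the paper's definition of an admissible partition, which is what its proof actually uses); for instance $4=3+1$ with $d=3$ satisfies the stated hypotheses but is not realizable. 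With that reading of the hypothesis, your proof is complete.
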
 

The method of proof yields a new proof of the topological realizability of branch data satisfying the hypotheses in the first paragraph of Theorem 1; see Theorem \ref{thm:genrealizable}.  

We now consider the assignment $\GGG \mapsto F_\GGG$, regarded as a map from planar multigraphs to Thurston maps. 

\begin{theorem}
\label{thm:injective}
Multigraphs $\GGG_1, \GGG_2$ are planar isomorphic if and only if the Thurston maps $F_1, F_2$ are combinatorially equivalent.  
\end{theorem}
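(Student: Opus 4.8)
The plan is to prove the two implications separately, with ``planar isomorphic'' understood in the orientation‑preserving sense (consistent with combinatorial equivalence of Thurston maps being orientation‑preserving); the forward implication is a naturality remark, and essentially all of the work is in the converse.

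\emph{Planar isomorphism $\Rightarrow$ combinatorial equivalence.} First I would record that the blowing‑up construction $\GGG \mapsto F_\GGG$ of \cite{kmp-tan:blow} is natural: $F_\GGG$ is built, inside a regular neighbourhood of $S\GGG$, from local models attached to each weighted edge, and these depend only on the isotopy class, rel $V(S\GGG)$, of the embedded weighted graph. Hence an orientation‑preserving homeomorphism $\phi\colon S^2 \to S^2$ carrying $S\GGG_1$ with its weights onto $S\GGG_2$ conjugates $F_1$ to a Thurston map that agrees with $F_2$ away from a neighbourhood of $S\GGG_2$ and is isotopic to $F_2$ rel $Z(F_2) = V(\GGG_2)$; composing $\phi$ with the correcting isotopy exhibits a combinatorial equivalence. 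This is the easy half.

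\emph{Combinatorial equivalence $\Rightarrow$ planar isomorphism.} The aim is to recover $\GGG$, up to orientation‑preserving planar isomorphism, from the combinatorial class of $F_\GGG$. The vertices are free: the construction gives a canonical bijection $V(\GGG) \leftrightarrow \Crit(F_\GGG) = Z(F_\GGG)$ under which the weighted valence of a vertex is the multiplicity of the critical point. For the edges and the embedding I would show that $S\GGG \subset S^2$ is, up to isotopy rel $Z(F_\GGG)$, a \emph{canonical $F_\GGG$-invariant graph}: by construction each blown‑up edge maps homeomorphically onto itself, so $S\GGG$ may be realised with $F_\GGG(S\GGG)=S\GGG$ and $V(S\GGG)=Z(F_\GGG)$, and the claim is that $S\GGG$ is singled out among such invariant graphs by a minimality/spine condition — concretely, it is the attractor of the ``pull an embedded graph back through $F_\GGG$'' operation acting on isotopy classes rel $Z(F_\GGG)$. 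Granting this, the number of parallel strands of $S\GGG$ joining two prescribed critical points, i.e.\ the edge weight, is likewise determined, so the weighted embedded graph $\GGG$ becomes a combinatorial invariant of $F_\GGG$. To transport it, let $h_0,h_1\colon S^2\to S^2$ be orientation‑preserving homeomorphisms with $h_0\circ F_1 = F_2\circ h_1$ and $h_0\simeq h_1$ rel $Z(F_1)$; then $h_1(S\GGG_1)$ has vertex set $Z(F_2)$, and from $F_2(h_1 S\GGG_1)=h_0 F_1(S\GGG_1)=h_0(S\GGG_1)\simeq h_1(S\GGG_1)$ rel $Z(F_2)$ (using $h_0\simeq h_1$ rel $Z(F_1)\supseteq V(S\GGG_1)$) it follows that $h_1(S\GGG_1)$ is, up to isotopy rel $Z(F_2)$, an $F_2$-invariant graph of the required type, hence isotopic rel $Z(F_2)$ to $S\GGG_2$ by the uniqueness claim; since $h_1$ is orientation‑preserving this is an orientation‑preserving planar isomorphism $\GGG_1\cong\GGG_2$, with weights matched because they are read off from strand multiplicities.

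\emph{The main obstacle} is the uniqueness/identification claim: that $S\GGG$ is, up to isotopy rel $Z(F_\GGG)$, the unique canonical invariant graph, equivalently the attractor of the graph‑pullback operation. I expect to prove it by the same contraction mechanism that underlies the fact (\cite{kmp-tan:blow}) that $F_\GGG$ is combinatorially equivalent to $f_\GGG$ when $\GGG$ is connected — iterating the pullback on isotopy classes of graphs and showing that an arbitrary candidate converges to $S\GGG$ — and this argument should not use connectedness, so it also covers disconnected $\GGG$. As a cross‑check in the connected case one may instead pass to the rational realisation: $F_\GGG\simeq f_\GGG$, which is unique up to M\"obius conjugacy by Thurston rigidity (the few non‑hyperbolic‑orbifold exceptions, forced only when $n\le 2$ and essentially $z\mapsto z^d$, handled directly, where in any case only one graph exists), and then recover $S\GGG$ from $f_\GGG$ as its Tischler/invariant graph through the superattracting fixed critical points. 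Once the canonical graph is available as an isotopy class, the bookkeeping above finishes the proof, the orientation data surviving precisely because combinatorial equivalences are orientation‑preserving.
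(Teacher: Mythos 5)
Your forward implication matches the paper (which treats it as immediate from the naturality of the blowing‑up surgery in \cite{kmp-tan:blow}), but the converse as written has a genuine gap: the entire content of the theorem is concentrated in your unproven claim that $S\GGG$ is, up to isotopy rel $Z(F_\GGG)$, the \emph{unique} canonical invariant graph, ``the attractor of the graph‑pullback operation.'' That operation is not even well defined as stated: the full preimage $F_\GGG^{-1}(G)$ of an embedded graph under a degree‑$d$ branched cover has many more edges than $G$, so iterating pullback on isotopy classes of embedded graphs does not contract --- one must first specify how to select a distinguished subgraph of the preimage, and proving that an \emph{arbitrary} candidate converges to $S\GGG$ is exactly the uniqueness statement you are trying to establish. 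The contraction argument you allude to (uniform contraction of $f^{-1}$ away from the critical set) is the one the paper uses later, in Theorem \ref{thm:mg_to_tg}, to identify the Tischler graph; it requires passing to the rational realization, hence connectedness of $\GGG$, and in any case presupposes rather than proves the identification of the invariant graph. Your transport step (showing $h_1(S\GGG_1)$ is an invariant graph for $F_2$) is fine modulo phrasing, but it only helps once uniqueness is in hand.

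The paper's proof closes this gap with a much more direct tool, namely \cite[Cor.~4]{kmp-tan:blow}: up to homotopy, an arc that is invariant (or has $k\geq 2$ preimages homotopic to itself) cannot intersect a blown‑up edge. Given an equivalence $(h_0,h_1)$, each edge $e_1$ of $\GGG_1$ has $k\geq 2$ preimages homotopic to itself under $F_1$, hence so does $h_0(e_1)$ under $F_2$; by the corollary $h_0(e_1)$ cannot meet any edge of $\GGG_2$, so it is either an edge of $\GGG_2$ or lies in a face $D$ of $S\GGG_2$ with at least four boundary vertices. The latter is excluded by producing an $F_2$‑invariant arc $\delta$ in $D$ joining two nonincident vertices and crossing $h_0(e_1)$, again contradicting the corollary. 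This pins down edges and multiplicities simultaneously and works for disconnected $\GGG$. If you want to salvage your approach, you should replace the ``attractor'' heuristic with this arc‑intersection argument (or prove an equivalent rigidity statement for invariant arcs); as it stands, the key step is asserted, not proved.
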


Theorem \ref{thm:injective} implies that the planar isomorphism type of $\GGG$ is an invariant of the Thurston class of $F_\GGG$.
Unfortunately we do not know if the function $\GGG \mapsto f_\GGG$ from the set of connected planar multigraphs to the set of M\"obius conjugacy classes of critically fixed rational maps is surjective.   We establish some partial results.  
First, if $p$ is a critically fixed complex polynomial, then $p=P_\GGG$ for some graph $\GGG$ (see \S \ref{subsecn:star}); indeed, in this case $\GGG$ is a star and is in some sense dual to the bipartite tree considered by Tischler.  For maps up to degree five, we find all critically fixed rational maps by an exhaustive algebraic search; comparing the resulting set of solutions with the set of maps obtained by blowing up graphs, we deduce that these two classes are the same; see \S \ref{secn:examples}.  
Making use of a recent topological result of Liu and Osserman \cite{liu:osserman:irreducible}, we will establish the following weaker result. 

\begin{theorem}
\label{theorem:aretwists}
Given two critically fixed Thurston maps $F_1, F_2$, if their number and multiplicities of critical points coincide, then up to combinatorial equivalence, we have $F_2=h\circ F_1$ where $h: (S^2, P(F_1)) \to (S^2, P(F_1))$ is an orientation-preserving homeomorphism fixing $P(F_1)$ pointwise. 
\end{theorem}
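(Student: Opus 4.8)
The plan is to reduce the statement to a question about the topological realizability of branch data together with a uniqueness-up-to-twist principle, and to feed the branch-data part into the Liu--Osserman irreducibility result. First I would observe that a critically fixed Thurston map $F$ of degree $d$ is completely described, as far as its ``coarse'' combinatorics go, by its collection of critical points and multiplicities, since every critical point is a fixed point and no other points enter the postcritical set: we have $P(F) = \Crit(F) = Z(F)$, and the critical orbit portrait is just the identity map on this finite set decorated with the local degrees $m_i+1$. So if $F_1, F_2$ have the same number and multiplicities of critical points, then after an isotopy we may assume $P(F_1) = P(F_2) =: P$ as marked subsets of $S^2$, and both maps induce the same critical orbit portrait on $P$. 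The goal is then to show that any two branched covers $F_1, F_2 : (S^2, P) \to (S^2, P)$ with the same branch data over the common postcritical set differ by postcomposition with a homeomorphism isotopic to the identity on $S^2$ but only required to fix $P$ pointwise (not isotopic to the identity rel $P$ — that would be the much stronger assertion that the Thurston classes coincide).

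The key steps, in order, are as follows. (1) Set up the space of branched covers: fix the target copy $(S^2, P)$ and consider all degree-$d$ branched covers $f : S^2 \to S^2$ whose branch locus is contained in $P$ with prescribed local degrees, and which restrict to the identity (as a set map with the correct local degrees) on $P$ viewed as a subset of the source as well. (2) Translate the problem into Hurwitz theory: such covers, up to the relevant equivalence, are classified by the Hurwitz space / by tuples of permutations in $S_d$ with product the identity and prescribed cycle types, modulo simultaneous conjugation and modulo the pure mapping class group action of $(S^2, P)$ on the source. (3) Invoke Liu--Osserman \cite{liu:osserman:irreducible}: their result gives connectedness (irreducibility) of the relevant Hurwitz space for the branch data in question — this is exactly the input that says there is essentially one such cover up to the allowed moves. (4) Unwind what ``one cover up to the allowed moves'' means: if $F_1$ and $F_2$ lie in the same connected component of this Hurwitz space, then there is a homeomorphism $\phi$ of the source and a homeomorphism $\psi$ of the target, both fixing $P$ pointwise (the target one can moreover be taken isotopic to the identity on $S^2$), with $F_2 = \psi \circ F_1 \circ \phi^{-1}$. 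Absorbing $\phi$ into the definition of combinatorial equivalence and renaming $\psi = h$ yields $F_2 = h \circ F_1$ up to combinatorial equivalence with $h$ fixing $P(F_1)$ pointwise, which is the claim. (5) Finally, check that the branch data arising here — $n \leq d$ fixed critical points of multiplicities $m_i \leq d-1$ with $\sum m_i = 2d-2$ — satisfies the hypotheses needed to apply Liu--Osserman; this is where Theorem \ref{thm:portrait_characterization} and the Riemann--Hurwitz bookkeeping get reused.

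The main obstacle I expect is step (4): carefully matching the equivalence relation in the statement of the Liu--Osserman theorem (which is phrased in the algebraic-geometry language of Hurwitz spaces of covers of $\IP^1$, with marked branch points) to the topological equivalence relation we want, namely ``$F_2 = h \circ F_1$ up to combinatorial equivalence, $h$ fixing $P(F_1)$ pointwise.'' One has to be careful about which homeomorphisms are allowed to move in which space: combinatorial equivalence already quotients by isotopy rel $P$ on both source and target, so the content of the theorem is precisely that the leftover ambiguity is a single post-composed homeomorphism that need only fix $P$ setwise-pointwise but may be nontrivial in $\Mod(S^2, P)$. Getting the quantifiers and the rel-$P$ versus not-rel-$P$ distinctions exactly right — and confirming that irreducibility of the Hurwitz space really does collapse everything to a single such twist rather than leaving a finite-to-one ambiguity — is the delicate part. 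A secondary, more routine obstacle is verifying the precise hypotheses of \cite{liu:osserman:irreducible} hold for our branch data (e.g. any genericity or ``no small orbits'' conditions), but given the explicit constraints $n \leq d$, $m_i \leq d-1$, $\sum m_i = 2d-2$ this should be a finite case-check or a direct consequence of their main theorem.
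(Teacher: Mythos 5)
Your proposal follows essentially the same route as the paper: normalize both maps within their combinatorial classes so they share the same critical/postcritical set $P$, pass to Hurwitz factorizations, invoke Liu--Osserman to place the two factorizations in the same \emph{pure} braid group orbit, and unwind this to a relation $h_0\circ F_1 = F_2\circ h_1$ with both homeomorphisms fixing $P$ pointwise, whence $F_2$ is a twist of $F_1$ up to combinatorial equivalence. The only point worth sharpening is your step (5): the hypothesis of Liu--Osserman that matters is not Theorem \ref{thm:portrait_characterization} or Riemann--Hurwitz bookkeeping but the pure-cycle condition that the partition over each branch value has the form $[k,1,\dots,1]$, which holds automatically here because a critical point $c_j$ lying over the critical value $c_i$ would satisfy $c_j=f(c_j)=c_i$, so each such fiber contains exactly one critical point --- a fact the paper also reuses to show that the \emph{source} homeomorphism fixes $P$ pointwise, a verification your step (4) asserts but does not carry out.
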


Put another way, the {\em Hurwitz class} of a critically fixed Thurston map depends in the sense of S. Koch \cite{koch:criticallyfinite} only on its {\em branch data}. In particular, up to equivalence, every critically fixed rational map $R$ arises, by postcomposing with a homeomorphism, or {\em twisting}, a rational map $R_\GGG$ obtained via the graph construction.  
However, there are explicit examples of distinct graphs with the same branch data. 
We conclude that the graph $\GGG$ is not invariant under twisting; indeed, this is not surprising: the rabbit and airplane polynomials have different Hubbard trees, but are related by twisting; cf. \cite{bartholdi:nekrashevych:twisted}.  

Suppose now $f$ is an arbitrary critically fixed rational map.  Associated to $R$ is another graph-theoretic invariant, which we call its {\em Tischler graph} $\TTT$, defined as the union of the closures of its finite many fixed internal rays; see \S \ref{secn:tischler}.      If $f=f_\GGG$ is obtained by blowing up $\GGG$ then we show that one can compute $\TTT$ from $\GGG$ and vice-versa.   Our lack of understanding regarding whether $f=f_\GGG$ in general stems in part from a lack of knowledge regarding the structure of $\TTT$; in particular, we do not know whether $\TTT$ is always connected.  

Our investigations concern a very special family of dynamical systems. Within this family, a number of natural invariants coincide.   To highlight the central issues, we take here the opportunity to formally elucidate the various relationships that exist between these invariants in as general a context as possible. One reason is that a number of these invariants have close analogs in non-dynamical settings. The distinctions, however, are subtle and important for applications in dynamics. The finest such invariant is a group-theoretic one called a {\em wreath recursion} on the fundamental group $\pi_1(S^2-P(f))$.   Laurent Bartholdi \cite{bartholdi:program:img} has implemented the  iterative algorithm on Teichm\"uller space used in the proof of Thurston's characterization of rational functions.  Taking as input a wreath recursion, it returns as output a numerical approximation of the corresponding rational map, if it exists, or an algebraic description of a topological obstruction. 

Our investigations conclude with systematic enumeration of examples for degrees up to 5, for which naive algebraic methods are sufficient, if a bit unwieldy in some cases.  For maps of degree 6, all such planar graphs were enumerated, and their wreath recursions computed; we present some representative data and discuss these computations in detail.  Conceivably, there are critically fixed maps $g$ which do not arise as maps of the form $f_\GGG$ for some planar multigraph.  In light of Theorem \ref{theorem:aretwists}, $g$ is a twist of some $f_\GGG$.  Future work will address the classification of twists of critically fixed rational maps.  

\subsection{Acknowledgement} This research was supported by NSF DMS REU grants 0851852 and 1156515.  K. Pilgrim was also supported by Simons Foundation grant no.245269 and by the Department of Mathematics of Indiana University.   Fractal images created with {\em FractalStream}, at \url{https://code.google.com/p/fractalstream/downloads/list}. The authors thank L. Bartholdi for sharing and installing his software \cite{bartholdi:program:img} and for related technical assistance.  

\newpage

\section{Thurston maps and their invariants} 
\subsection{Summary of relationships} 
\label{subsecn:summary} 
A {\em Thurston map} is an orientation-preserving self-branched covering $f: S^2 \to S^2$ for which the postcritical set $P(f): = \union_{n>0}f^n(C(f))$ is finite, where $C(f)$ denotes the set of points $x$ at which the local degree $\deg(f,x)$ is larger than $1$. 
Two Thurston maps $f$, $g$ are called {\em Thurston}, or {\em combinatorially} equivalent if there exist orientation-preserving homeomorphisms $h_0, h_1: (S^2,P(f)) \to (S^2, P(g))$ such that $h_0 \circ f = g \circ h_1$ and $h_0$ is isotopic to $h_1$ through homeomorphisms $h_t, 0 \leq t \leq 1$, for which $h_t|_{P(f)} = h_0|_{P(f)}$ for all $t \in [0,1]$. 

It is useful to think of a Thurston map as a noninvertible analog of a homeomorphism of a surface to itself, and of a combinatorial equivalence  class as the analog of a conjugacy class of the corresponding mapping class. 

Let $f: S^2 \to S^2$ be a Thurston map. Associated to $f$ will be five other objects, organized as follows.

\[
\xymatrix{
\mbox{Thurston map } \ar[r] \ar[d] & \mbox{Critical orbit portrait}\ar[d]  \\ 
\mbox{Augmented branched cover} \ar[r] \ar[d] & \mbox{Augmented branch data} \ar[d] \\
\mbox{Branched cover} \ar[r] & \mbox{Branch data}
}
\]

\gap

The top row consists of dynamical objects. The bottom row consists of classical, non-dynamical objects. The middle row is a mild enhancement of the non-dynamical objects. The right-hand column consists of set-theoretic objects, while the left-hand column consists of objects belonging to the realm of the topology of surfaces.  The horizontal arrows arise by restricting to finite sets. The upper vertical arrows forget the identification of domain and range. The  lower vertical arrows arise by forgetting the distinguished role played by a finite set. 

For each type of object, we will have a corresponding notion of isomorphism and of automorphism, and this correspondence will be natural with respect to the implied maps above. We will thereby obtain six categories (whose morphisms are all isomorphisms) and corresponding functors 

\[
\xymatrix{
\TM  \ar[r] \ar[d] & \portrait \ar[d]  \\ 
\ABC  \ar[r] \ar[d] & \ABD  \ar[d] \\
\BC  \ar[r] & \BD \\
}
\]
\gap
Now suppose $f$ is a Thurston map. 

 \subsection{Distinguished sets} 
Naturally associated to a Thurston map $f$ are the following distinguished subsets:
\be
\item the set  $C(f)= \{x : \deg(f, x) > 1\}$ of critical (also called branch) points; here $\deg(f,x)$ is the local degree of $f$ at $x$;

\item the set $V(f):=f(C(f))$, the set of critical (also called branch) values; 

\item the postcritical set $P(f): = \union_{n>0}f^n(C(f))$; by definition, this set is finite for a Thurston map.  We also denote this by $B(f):=P(f)$ when forgetting dynamics (see below). 

\item $Z(f):=P(f) \union C(f)$;

\item $A(f):=P(f) \intersect f^{-1}(B(f))$.
\eb

\subsection{Critical orbit portrait}  
\label{subsecn:cop}

The {\em critical orbit portrait}\footnote{called a {\em mapping scheme} in \cite{kmp:census}}  associated to $f$ is the pair \[\big(\ Z(f) \stackrel{f}{\longrightarrow }Z(f), Z(f) \stackrel{\deg(f, \cdot)}{\longrightarrow}\N \big)\] consisting of the restrictions of $f$ and of the local degree function to the set $Z(f)$; it is therefore a dynamical, set-theoretic invariant. 
We say $\portrait(f)$ and $\portrait(g)$ are {\em isomorphic} if there is a conjugacy $Z(f) \to Z(g)$ respecting the local degree functions.  We obtain a corresponding category $\portrait$ of whose objects are abstract critical orbit portraits $(Z \to Z, \ Z \to \N)$ and whose isomorphisms are defined similarly.  

Note that if $f, g$ are Thurston equivalent via $(h_0, h_1)$ then the restriction $Z(f) \stackrel{h_1}{\longrightarrow} Z(g)$ induces an isomorphism between the corresponding critical orbit portraits. So we obtain a functor $\TM \to \portrait$. 

\subsection{Augmented branched cover} 
\label{subsecn:augbc}

The {\em augmented branched cover} associated to $f$ is the pair $f: (S^2, A(f)) \to (S^2, B(f))$, where now domain and range are no longer identified.  

The augmented branched coverings associated to Thurston maps $f$ and $g$ are isomorphic (or augmented-Hurwitz-equivalent)\footnote{In \cite{koch:criticallyfinite}, this notion is formulated only for maps $f$ and $g$ with $A(f)=A(g), B(f)=B(g)$, and is there called $(A,B)$-Hurwitz-equivalent}  if there are orientation-preserving homeomorphisms $h_0: (S^2, B(f)) \to (S^2, B(g))$ and $h_1: (S^2, A(f)) \to (S^2, A(g))$ such that $h_0 \circ f = g \circ h_1$.  We obtain a corresponding category $\ABC$ of abstract augmented branched coverings $(S^2, A) \to (S^2, B)$. 

Clearly, a Thurston equivalence yields an augmented Hurwitz-equivalence. We obtain in this way a forgetful functor $\TM \to \ABC$. 

Note that if $f, g$ are $(A,B)$-Hurwitz equivalent via a pair $(h_0, h_1)$, then necessarily $A(f) \union C(f) \stackrel{h_1}{\longrightarrow} A(g) \union C(g)$. 
\gap

\subsection{Augmented branch data} 

The {\em augmented branch data} associated to $f$ is the pair \[\left( \ A(f)\union C(f) \stackrel{f}{\longrightarrow} B(f), \ A(f)\union C(f)\stackrel{\deg(f,\cdot)}{\longrightarrow}\N\  \right)\] obtained by restricting $f$ and the local degree function to the set $A(f)\union C(f)$. Here, domain and range of $f$ are no longer identified, so that $A(f)\union C(f)$ and  $B(f)$ are disjoint.  

The augmented branch data of $f$ and of $g$ are called isomorphic if there are bijections $A(f) \union C(f) \to A(g) \union C(g)$ and $B(f) \to B(g)$ such that $A(f) \union C(f) \to A(g) \union C(g)$ respects the local degree functions and the diagram 
\[
\xymatrix{
A(f)\union C(f)  \ar[r] \ar[d]_f & A(g)\union C(g) \ar[d]^g  \\ 
B(f)  \ar[r] & B(g) \\
}
\]
commutes. 
We obtain similarly a corresponding category $\ABD$ of abstract augmented branch data $(A \union C \to B, A \union C \to \N)$. 
Sending an abstract augmented branched cover to the corresponding abstract augmented branch data is a functor $\ABC \to \ABD$. 
\gap

\subsection{Branched cover} 

The {\em branched cover} associated to $f$ is the map $f: S^2 \to S^2$, where now domain and range are no longer identified. 

The branched covers $f, g$ are {\em Hurwitz equivalent} if there are orientation-preserving homeomorphisms $h_0, h_1$ satisfying $h_0 \circ f = g \circ h_1$. Note that $h_1: C(f) \to C(g)$ and $h_0: V(f) \to V(g)$.  Thus (abstract, as opposed to arising from Thurston maps---but see below) branched covers and Hurwitz equivalences form a category $\BC$. 

An isomorphism of augmented branched covers  gives a Hurwitz equivalence, so we again obtain a forgetful functor $\ABC \to \BC$. 
\gap

\subsection{Branch data, or passport} 
\label{subsecn:branchdata}
The {\em branch data} associated to $f$ is the pair \[ \big( \ C(f) \stackrel{f}{\longrightarrow}V(f), \ C(f) \stackrel{\deg(f, \cdot)}{\longrightarrow} \N \ \big).\]
More conventionally, branch data can be regarded as a collection of partitions of the degree $d$ of $f$, one for each element $v$ of $V(f)$, with each partition given by the unordered set of local degrees at points in $f^{-1}(v)$.  
The branch data of $f$ and $g$ are isomorphic if there are bijections $C(f) \to C(g), V(f) \to V(g)$ respecting the local degree functions and satisfying 
\[
\xymatrix{
C(f)  \ar[r] \ar[d]_f & C(g) \ar[d]^g  \\ 
V(f)  \ar[r] & V(g) \\
}
\]
Since in this work we are restricting our attention to self-maps of spheres, it is convenient to now introduce three additional admissibility requirements that must be satisfied by abstract branch data $(C \to V, C \stackrel{\deg }{\longrightarrow} \N)$ which are necessary in order to arise from a branched covering $S^2 \to S^2$.  
\bi 
\item $\deg(c) \geq 2$ for each $c \in \C$, and $C \to V$ is surjective (else we are dealing with augmented branch data)
\item the {\em Riemann-Hurwitz formula} 
\begin{equation}
\label{equation:RH}
\sum_{c \in C}(\deg(c)-1)=2(d-1)
\end{equation}
is satisfied for some integer $d \geq 2$, called the {\em degree}. 
\item $\deg(c) \leq d$ for all $c \in C$. 
\ib
Thus we have a category $\BD$ whose objects are abstract admissible pairs $(C \to V, C \to \N)$. 
We obtain similarly a functor $\BC \to \BD$. By forgetting values off the critical locus (that is, by restricting) we obtain a functor $\ABD \to \BD$. 

\subsection{Hurwitz factorizations} 
\label{subsecn:hurwitzfactor}
There are necessary and sufficient algebraic conditions, algorithmically implementable, for determining Hurwitz equivalence and for determining the realizability of a given abstract admissible branch data by a branched covering; see also \S \ref{subsecn:bc_to_bd} below. Though classical, there are some subtleties regarding identifications, and this seems not to be widely known among researchers in dynamics. 
So we briefly review the theory, due to Hurwitz; see e.g. \cite{berstein:edmonds:construction} . 

Fix $d \geq 2$ and fix an abstract admissible branch data $(C \stackrel{f}{\longrightarrow}V, C \stackrel{\deg()}{\longrightarrow}\N)$ of degree $d$; set $n:=\#V$. Consider the (possibly empty) set of all branched covers $F: S^2 \to S^2$ realizing this branch data. We will assign to $F$ a group-theoretic invariant, called a {\em Hurwitz factorization}.  We will characterize the set of invariants that arise, and we will give necessary and sufficient conditions in terms of Hurwitz factorizations for two branched covers $F, G$ realizing this branch data to be Hurwitz equivalent.  Loosely speaking, this is a coordinate-free classification of isomorphism classes of covering spaces; the base is allowed to vary. 

The invariants we will consider, called {\em Hurwitz factorizations}, take the following form. Fix an enumeration $V=\{v_1, \ldots, v_n\}$. For each $i=1, \ldots, n$ let 
\[ d_{i,1} + \ldots + d_{i, m_i}=d\]
be the partition of $d$ determined from the given branch data by the local degrees $d_{ij}:=\deg(c_{ij})$ above $v_i$. A {\em Hurwitz factorization of degree $d$ (and genus $0$)} is an ordered $n$-tuple of permutations 
\[ (\sigma_1, \ldots, \sigma_n)\]
satisfying the following conditions:
\be
\item (transitivity) the subgroup of $S_d$ generated by $\sigma_1, \ldots, \sigma_n$ is transitive; 
\item (local degrees) for each $i=1, \ldots, n$, 
\[ \sigma_i = \sigma_{i1}\cdot \ldots \cdot \sigma_{im_i}\]
where the $\sigma_{ij}$ are pairwise disjoint (and thus commute, so the order in the above product is irrelevant), and $\sigma_{ij}$ is a cycle of length $d_{ij}$;
\item (factorization) $\sigma_1\cdot \ldots \cdot \sigma_n = \id$; here $\sigma_1$ is performed first, since monodromy is usually a right action. 
\eb

The {\em Hurwitz Existence Theorem} asserts that the given branch data is realizable precisely when there exists a Hurwitz factorization satisfying the above properties; cf. \cite{berstein:edmonds:construction}.   

We turn now to the extent to which the set of such Hurwitz factorizations may be regarded as invariants of branched coverings realizing the given branch data. 

For convenience, fix an identification $S^2=\rs$. Fix an identification $V=\{v_i:=\exp(2\pi i k/n), k=1, \ldots, n\} \subset S^2$.  Let $D:=\{|z|\leq 2\}\subset \rs$.  Let $\Homeo^+(S^2_n)$ and $\Homeo^+(D_n, \bdry D)$ denote respectively the group of orientation-preserving homeomorphisms of the sphere and of the disk $D$ which preserve (not necessarily pointwise) the set $V$ and which in the latter case are the identity on $\bdry D$; each is equipped with the compact-open topology. Let $\Mod(S^2_n), \Mod(D_n)$ denote the corresponding mapping class groups, that is, the group of path-components.  By ``capping'' via an extension fixing the point at infinity, there is a natural inclusion $\Homeo^+(D_n, \bdry D) \hookrightarrow \Homeo^+(S^2_n)$.  By the Alexander Trick, this is surjective on the level of path-components, so the induced homomorphism $\Mod(D_n) \to \Mod(S^2_n)$ is surjective.   The kernel is infinite cyclic if $n \geq 3$;  a generator is represented by a Dehn twist about the curve $|z|=3/2$.   Note that $\Mod(D_n)$ is isomorphic to the Artin braid group $B_n$ on $n$ strands. 

For $i=1, \ldots, n$ let $r_i$ be the Euclidean segment joining the origin to $v_i$, and let $\gamma_i$ be the loop in $\rs \setminus V$ based at the origin which runs along $r_i$, loops around $v_i$ in the counterclockwise direction, and returns along $r_i$. Let $g_i$ be the corresponding element of $\pi_1(\rs\setminus V, 0)$. 

Now suppose $F$ is a branched covering realizing the above branch data. By postcomposing with a (non-unique) homeomorphism $\phi$, we may assume (i) $V(F)=V$, and (ii) above each $v_i \in V$, the partition of the integer $d$ given by the ramification in the given abstract branch data  coincides with that given by the ramification of $F$.
Fix an identification $\Lambda: F^{-1}(0)=\{1, \ldots, d\}$.  Then there is a monodromy representation 
\[ \rho_F: \pi_1(\rs \setminus V, 0) \to S_d.\]
The {\em Hurwitz factorization} $\sigma(F)$ associated to $F$ is the ordered $n$-tuple of permutations 
\[ \sigma(F):=(\sigma_1(F), \ldots, \sigma_n(F)) \in S_d \times \ldots \times S_d=:(S_d)^n \]
where $\sigma_i(F)=\rho_F(g_i)$. 

The Hurwitz factorization associated to $F$ constructed in the previous paragraph depends on (i) the choice of homeomorphism $\phi$, and (ii) the choice of identification $\Lambda$. Clearly, a different choice of $\Lambda$ changes $\rho_F$ by post-composition with an inner automorphism of $S_d$.  Considering  (ii), then, we see that we must consider two Hurwitz factorizations $(\alpha_1, \ldots, \alpha_n), (\beta_1, \ldots, \beta_n)$, to be equivalent if they are simultaneously conjugate, i.e. if there exists $x \in S_d$ such that 
\[(\beta_1, \ldots, \beta_n)=(\alpha_1^x, \ldots, \alpha_n^x)\]
where we denote $g^x:=x^{-1}gx$ and where $x^{-1}$ is performed first (since monodromy is usually a right action). 

If $\phi_1, \phi_2$ are two different homeomorphisms, the resulting coverings branched over $V$ differ by postcomposition by an element $h$ of $\Homeo^+(S^2_n)$. If $F, G$ are two coverings with $V(F)=V(G)=V$ and $G=h\circ F$ where $h \in \Homeo^+(S^2_n)$ then $G^{-1}=F^{-1}\circ h^{-1}$.  It follows that the monodromy representations of $F$ and $G$ are related: 
\[ G = h \circ F \implies \rho_G \dot{=} \rho_F \circ h_* \iff \rho_G \circ h^{-1}_* \dot{=} \rho_F\]
where $\dot{=}$ denotes equality up to postcomposition with inner automorphisms. Note that if the natural map 
\[ \Aut(C \stackrel{f}{\longrightarrow}V, C \stackrel{\deg}{\longrightarrow}\N) \longrightarrow \Aut(V)\]
(obtained by restricting an automorphism of branch data to the set of critical values) has trivial image, then the corresponding homeomorphism $h$ fixes $V$ pointwise, and so the corresponding mapping class (or any corresponding braid) element is pure. 
The effect of this ambiguity on the Hurwitz factorizations can be calculated as follows. Since we consider factorizations differing by simultaneous conjugation to be equivalent, we may assume $h(0)=0$, so that $h_*$ is well-defined. Then for each $i=1, \ldots, n$, 
\[ \sigma_i(G) = \rho_F(h_*^{-1}(g_i)).\]
More concretely: to find $\sigma_i(G)$, express $h_*^{-1}(g_i)$ as a word in the generators $g_1, \ldots, g_n$, and replace the letters in this word by the corresponding permutations $\sigma_1(F), \ldots, \sigma_n(F)$. 

In this manner the groups $B_n \simeq \Mod(D_n), \Mod(S^2_n)$ act on the set of Hurwitz factorizations, with the action of the former factoring through the latter since the kernel acts trivially.  Moreover, given $F$, the discussion in the previous paragraph shows that the orbit of $\sigma(F)$, under the action of both the braid group $B_n$ and the diagonal action of simultaneous conjugation by elements of $S_d$, is a well-defined invariant of the branched cover $F$. 

The formalism done, the proof of the following result is then very short: 

\begin{theorem}
\label{thm:hurwitz}Fix a given realizable admissible abstract branch data. 
Two branched coverings $F, G$ realizing this data are Hurwitz equivalent if and only $\sigma(F), \sigma(G)$ lie in the same $B_n$-orbit, up to simultaneous conjugation. 
\end{theorem}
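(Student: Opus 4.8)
The plan is simply to assemble the pieces prepared in the preceding discussion; the normalization, the monodromy calculation, and the capping surjection $B_n\simeq\Mod(D_n)\twoheadrightarrow\Mod(S^2_n)$ have already done the work, and what remains is short bookkeeping. The central point is that, after postcomposing $F$ and $G$ by homeomorphisms $\phi_F,\phi_G$ of $\rs$ so that $V(F)=V(G)=V$ and the ramification above each $v_i$ is the prescribed partition $d_{i,1}+\dots+d_{i,m_i}=d$, a branched cover of $\rs$ with these branch data is determined up to isomorphism over the identity by its monodromy representation $\rho\colon\pi_1(\rs\setminus V,0)\to S_d$ up to conjugacy in $S_d$ — equivalently, by the class of its Hurwitz factorization $\sigma(\cdot)$ up to simultaneous conjugation. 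This rests on two standard facts: (i) isomorphism classes of degree-$d$ covers of $\rs\setminus V$ correspond to conjugacy classes of monodromy representations; and (ii) an isomorphism of the unbranched restrictions extends uniquely across the punctures to an isomorphism of the branched covers of $\rs$ (one may fill in punctures of a finite-degree cover of a surface). Replacing $F$ by $\phi_F\circ F$ (and $G$ by $\phi_G\circ G$) changes nothing up to Hurwitz equivalence, since $\phi_F\circ F=(\phi_F\circ F)\circ\id$, and by transitivity of Hurwitz equivalence we may argue entirely with the normalized maps.

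\textbf{Sufficiency.} Suppose $\sigma(G)$ lies in the $B_n$-orbit of $\sigma(F)$ up to simultaneous conjugation. Since the $B_n$-action on Hurwitz factorizations factors through $\Mod(S^2_n)$ (the kernel of the capping map, generated by the Dehn twist about $|z|=3/2$, acts trivially), choose $h\in\Homeo^+(S^2_n)$ realizing the relevant mapping class. By the formula $\sigma_i(h\circ F)=\rho_F\bigl(h_*^{-1}(g_i)\bigr)$ derived above, $\sigma(h\circ F)$ and $\sigma(G)$ agree up to simultaneous conjugation, hence $\rho_{h\circ F}\doteq\rho_G$. By (i)--(ii) there is $h_1\in\Homeo^+(S^2)$ with $h\circ F=G\circ h_1$; thus $F$ and $G$ are Hurwitz equivalent.

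\textbf{Necessity.} Conversely, suppose $F$ and $G$ are Hurwitz equivalent via $(h_0,h_1)$, so $h_0\circ F=G\circ h_1$. After the normalization $V(F)=V(G)=V$, the identity $h_0\bigl(V(F)\bigr)=V(G)$ forces $h_0(V)=V$, i.e.\ $h_0\in\Homeo^+(S^2_n)$. Then $h_0\circ F=G\circ h_1$ exhibits an isomorphism over the identity between $G$ and $h_0\circ F$, so $\rho_G\doteq\rho_{h_0\circ F}$; by the computation above this is the image of $\rho_F$, up to conjugacy, under the mapping class of $h_0$ acting through $\Mod(S^2_n)$. Translating back to Hurwitz factorizations, $\sigma(G)$ lies in the $B_n$-orbit of $\sigma(F)$ up to simultaneous conjugation.

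\textbf{Main obstacle.} The only genuinely non-formal ingredient is (ii), which is elementary for surfaces. The real care required — most of which has already been carried out in the pages preceding the statement — is keeping track of the three sources of ambiguity: the normalizing homeomorphism $\phi$, the fiber labeling $\Lambda$, and the discrepancy between postcomposition by homeomorphisms of $\rs$ and the braid action on factorizations. One must confirm that each is absorbed into either simultaneous conjugation or the $B_n$-action, and in particular that ``$B_n$-orbit'' and ``$\Mod(S^2_n)$-orbit'' define the same equivalence relation on Hurwitz factorizations — equivalently, that every postcomposition by an element of $\Homeo^+(S^2_n)$ is realized by some braid and that the kernel of the capping map acts trivially. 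Once this compatibility is in hand, the two implications above are immediate.
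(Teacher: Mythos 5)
Your proof is correct and follows essentially the same route as the paper: normalize both maps so that $V(F)=V(G)=V$ with matching ramification, pass to the associated unramified coverings and their monodromy representations, use the surjection $B_n\simeq\Mod(D_n)\twoheadrightarrow\Mod(S^2_n)$ to translate between the braid action and postcomposition by homeomorphisms, and extend a covering isomorphism across the punctures to obtain the lift $h_1$. Your writeup is somewhat more explicit than the paper's about the bookkeeping of the various ambiguities, but no new idea or different decomposition is involved.
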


\pf By varying $F$ and $G$ within their Hurwitz classes as above, we may assume $V(F)=V(G)=V$ and that the ramification of $F$ and of $G$ above each point $v_i \in V$ is given by the given branch data.   Consider the corresponding unramified coverings 
$F: S^2 \setminus F^{-1}(V) \to S^2\setminus V$ and $G: S^2\setminus G^{-1}(V) \to S^2\setminus V$. If $F, G$ are Hurwitz equivalent via a pair $h, h_1$ with $h: (S^2, V) \to (S^2, V)$, then the unramified covering spaces given by $h \circ F$ and $G$ are isomorphic, and from the above discussion, the Hurwitz factorizations of $F$ and $G$ lie in the same orbit.   Conversely, if the Hurwitz factorizations of $F$ and $G$ differ by the action of a homeomorphism $h$, then the corresponding monodromy representations of the unramified coverings $h\circ F$ and $G$ are isomorphic, and so $h$ lifts to a homeomorphism $h_1: S^2\setminus F^{-1}(V) \to S^2 \setminus G^{-1}(V)$ which extends over the puntures to a homeomorphism $h_1: S^2 \to S^2$ so that the pair $(h, h_1)$ gives a Hurwitz equivalence from $F$ to $G$.  
\qed
\gap

Practically speaking, then, to determine whether two given branched coverings $F, G$ realizing the same branch data are Hurwitz equivalent, we do the following. Choose basepoints $b_F, b_G$ in $S^2\setminus V(F), S^2\setminus V(G)$, respectively. Choose an indexing of $V(F), V(G)$ so that the ramification of $F$ and $G$ above the $ith$ point are isomorphic. Choose a set of rays in the codomains of $F$ and of $G$ running from the basepoint to the critical values. This data determines, up to homeomorphism respecting the labelling of critical values, an identification of each of the codomains of $F$ and $G$ with  the standard codomain and associated set of rays and critical values constructed above.  One then computes the associated Hurwitz factorizations $\sigma(F), \sigma(G)$, which may be read off from a combinatorial presentation of $F$ and of $G$.  The problem of Hurwitz equivalence is then reduced to an (albeit complicated) problem in algebra.

\gap

\subsubsection{Extension to augmented branched coverings.}  Assume that a given abstract augmented branch data $(A \union C \to B, A \union C \to \N)$ is realizable by an augmented branched cover, $f$.  Consider the problem of deciding the augmented Hurwitz equivalence of two such realizations $f, g$. One may attempt to adapt the classical framework to this new setting. However, if $A$ does not consist of all inverse images (counted with multiplicity) of elements of $B$, then a homeomorphism $h$ for which $h\circ f$ and $g$ are covering equivalent need not have a lift $h_1$ which sends $A(f)$ to $A(g)$ as is required in the definition of augmented Hurwitz equivalence.   One may attempt to demonstrate that some subgroup of the braid group will determine an equivalence relation wherein such lifts $h_1$ must send $A(f) \to A(g)$. The difficulty is that such lifts need not be unique, and formulating an appropriate group action is tricky.  In the ``pure'' category this has been accomplished in \cite[\S 3.1]{kmp:kps}. 

\subsection{Functional compositions and decompositions}

A Thurston map $f: (S^2, P) \to (S^2, P)$ may factor as a composition of augmented branched coverings. 
\gap

\noindent{\bf Example \cite[\S 5.1]{bekp}.} The quartic polynomial 
\[ f(z)=2i\left(z^2-\frac{1+i}{2}\right)^2\]
has postcritical set $P:=\{0, 1, -1, \infty\}$. It factors as $f=g\circ s$ with $s(z)=z^2$ and $g(z)=2i(z-\frac{1+i}{2}))^2$. Put $A:=\{0, 1, \infty\}$. Then $f$ factors as a composition of augmented branched covers 
\[ (\IP^1, P) \stackrel{s}{\longrightarrow} (\IP^1, A) \stackrel{g}{\longrightarrow}(\IP^1, P)\]
as is easily verified. The dynamical properties of $f$ are signficiantly influenced by the non-dynamical properties of its compositional factors.  For example, here $\#A=3$, so the pullback map on Teichm\"uller space induced by $f$ is constant. 

If a Thurston map $f$ with fixed critical points factors nontrivially as above, it is equivalent to $z \mapsto z^n$ for some $n$ with $|n|\geq 2$. For if $f=g\circ s$ then each critical point of $g$ must be a critical value of $f$ above which $f$ is totally ramified (else there are critical points collide). Thus $s$ and $g$ each have two critical points and two critical values, with $C(g)=V(s)$, so after conjugation the claim follows.   Thus critically fixed Thurston maps are, with the above exception, all indecomposable from a function-theoretic point of view. 

The preceding example demonstrates the importance of considering non-dynamical objects--- augmented branched coverings and augmented branch data---when considering the classification theory for Thurston maps.  The next example, which arose in conversations with S. Koch and A. Saenz, shows that functional factors of Thurston maps can be augmented branched covers which do not themselves arise from Thurston maps via forgetting identification of domain and range.   

\noindent{\bf Example.}  We identify $\rs$ with $S^2$.  Let $g: (\rs,P(g)) \to (\rs,P(g))$ be an integral Latt\`es example induced by $z + \Lambda \mapsto 3z+\Lambda$ on a complex torus $\C/\Lambda$.  Thus $\deg(g)=9$, and each of the 16 critical points of $g$ has local degree two.  These critical points map surjectively to the postcritical set $P(g)$ of $g$, and each element of $P(g)$ is a repelling fixed-point of $g$.  Pick $p \in P(g)$, and let $h: S^2 \to S^2$ be an orientation-preserving homeomorphism which sends $P$ bijectively onto $g^{-1}(P(g))\setminus\{p\}$.  Finally, put $f=h\circ g$.  The map $f$ is a {\em nearly Euclidean Thurston map} in the sense of \cite{cfpp:netmaps}.  

Here is a description of the critical orbit portrait of $f$. We may identify 
\[ C(f)=\{a_0, \ldots, a_{11}, b_0, b_1, b_2, c\}\]
so that 
\[ a_i \mapsto b_{i\negmedspace\negmedspace \mod3} \mapsto c \mapsto c\]
and thus $P(f)=\{b_0, b_1, b_2, c\}$.  

We now find a factorization of the augmented branched cover $f: (S^2, P(f)) \to (S^2, P(f))$.  Separating domain and range, set $\sfA:=P(f)=\{\sfa_1, \ldots, \sfa_4\}, \sfC:=P(f)=\{\sfc_1, \ldots, \sfc_4\}$.  It is easy to see that $f$ factors as a composition of degree three maps 
\[ (S^2, \sfA) \stackrel{g}{\longrightarrow} (S^2, \sfB)  \stackrel{s}{\longrightarrow} (S^2, \sfC) \] 
where 
\[ \sfa_1, \sfa_2, \sfa_3 \stackrel{g, 1:1}{\longrightarrow} \sfb_4, \;\; \sfa_4 \stackrel{g, 2:1}{\longrightarrow} \sfb_5\]
and 
\[ \sfb_i \stackrel{s, 1:1}{\longrightarrow} \sfc_i, i=1, \ldots, 3, \;\; \sfb_4 \stackrel{s, 2:1}{\longrightarrow} \sfc_4, \;\; \sfb_5 \stackrel{s, 1:1}{\longrightarrow}\sfc_4.\]
Since $\#\sfB=5>4=\#\sfA$, we conclude that the augmented branched covering $g: (S^2, \sfA) \to (S^2, \sfB)$, which is a compositional factor of the Thurston map $f$, cannot itself arise from a Thurston map by forgetting identification of domain and range.  

\subsection{Dynamical compositions and decompositions}    

Suppose $P \subset S^2$ is a finite set. A {\em multicurve} is a collection $\Gamma=\{\gamma_1, \ldots, \gamma_k\}$ of disjoint, pairwise non-homotopic, simple, closed, essential, unoriented, nonperipheral curves up to homotopy in $S^2\setminus P$. If $f: (S^2, P) \to (S^2, P)$ is a Thurston map, a multicurve $\Gamma$ is {\em invariant} if each preimage of an element of $\Gamma$ is either inessential, peripheral, or homotopic in $S^2\setminus P$ to an element of $\Gamma$. 

If $f^{-1}(\Gamma)=\Gamma$  (in the sense that for each $\gamma_i \in \Gamma$, there exists $\gamma_j \in \Gamma$ such that $\gamma_i$ is homotopic in $S^2-P$ to a component of $f^{-1}(\gamma_j)$), i.e. $\Gamma$ is {\em fully invariant}, then by cutting along $\Gamma$ one can decompose, in a dynamically natural way, a Thurston map into simpler pieces, see e.g. \cite{kmp:cds}. In the case $f=F_\GGG$ for some graph $\GGG$, the boundary of any sufficiently small regular neighborhood of an edge will be a fully invariant multicurve, see Lemma \ref{lemma:getmc} below. 

Of course, one could formulate a procedure inverse to that of blowing up an arc, and regard such surgery as a decomposition or reduction of the dynamics. 

\subsection{Wreath recursions}
\label{secn:wreath_recursion}

Suppose $f: (S^2,P) \to (S^2,P)$ is a Thurston map of degree $d$.  Fix a basepoint $b \in S^2-P$ and, for convenience, set now $G:=\pi_1(S^2-P, b)$ and $X:=\{1, \ldots, d\}$.  Fix a bijection $\Lambda: X \to f^{-1}(b)$ and use this to identify $X$ with $f^{-1}(b)$.  For each $x \in X$, choose a path $\lambda_x: [0,1] \to S^2-P$ joining $b$ to $x$.  

This data determines a {\em wreath recursion} of $G$: a homomorphism 
\[ \Phi_f: G \to G^d \rtimes S_d\]
\[ g \mapsto (g_1, \ldots, g_d)\sigma(g).\]
See \cite{nekrashevych:book:selfsimilar}.  Here, $\sigma(g)$ is the monodromy permutation of $X$ induced by $g$, with right action written $i \mapsto i^\sigma$ and 
\[ g_i = \lambda_i*f^{-1}(g)[i]*\overline{\lambda}_{i^\sigma}.\]
(The asterisk stands for concatenation of paths; $\lambda_i$ is traversed first; $f^{-1}(g)[i]$ is the lift of $g$ under $f$ based at $i$; the notation $\overline{\lambda}$ means the reversal of the path $\lambda$.)

Multiplication in $G^d\rtimes S_d$ is computed like this.  First, some conventions: permutations are {\em right} actions.  So $\sigma = (1234)$ means 
\[ (1234) = (1 \mapsto 2 \mapsto 3 \mapsto 4 \mapsto 1).\]
We often write $2=1^\sigma$; equivalently, $2=\sigma(1)$.  Notice that $(123)(35)=(1253) \neq (5123)$ according to our convention; permutations are composed left to right.  With this convention, here is how we multiply in $G^d\rtimes S_d$:  
\[ \left( \genby{g_1, \ldots, g_d}\sigma\right) * \left(\genby{h_1, \ldots, h_d}\tau\right)  = \genby{g_1h_{1^\sigma}, \ldots, g_dh_{d^\sigma}}\sigma\tau.\]
For example:
\[ \genby{1, ab, a, b^{-1}}(142)*\genby{ab, a^{-1}, 1, b}(134) = \genby{1.b, ab.ab, a.1, b^{-1}a^{-1}}(234).\]
This is nicely represented by regarding the elements as decorated permutations, and then concatenating them like braids: 
\[
\xymatrix{ 
1 \ar[rrrd]^1 & 2\ar[dl]_{ab} & 3\ar[d]^<{a}& 4 \ar[dll]^<<<<{b^{-1}} \\
1 \ar[drr]^<<<<<<{ab} & 2 \ar[d]^<<{a^{-1}}& 3 \ar[dr]^{1} & 4 \ar[dlll]^<<<<<<<<<<<<{b}\\
1 & 2 & 3 & 4 
}
\]

The wreath recursion may be conveniently encoded as follows.  For convenience we identify $S^2$ with $\rs$.  If $\#P=n$ we may identify $P=\{p_1, \ldots, p_n\}$ with $\{\exp(2\pi i k/n), k=1, \ldots, n\}$, where $p_k = \exp(2\pi i k/n)$.  For each $k=1, \ldots, n$ we choose as in \S \ref{subsecn:hurwitzfactor} a generator $g_k$ which runs radially from the origin to $p_k$, loops around $p_k$ counterclockwise, and returns to the origin radially.  Then the wreath recursion of $f$ is encoded by the list of $n$ expressions 
\[ \Phi(g_k) = (w_{1k}, \ldots, w_{dk})\sigma(g_k), \;\; k=1, \ldots, n\]
where $w_{ik}$ are words in the generators $g_k$.  If a topological model for $f$ is given, the monodromy induced by $g_k$ and the words $w_{ik}$ may be calculated; see \S \ref{secn:wreath_recursion}.  

Up to a suitable notion of equivalence, the wreath recursion is a complete invariant of the Thurston combinatorial class of $f$.   The following is a restatement of \cite[Theorem 6.5.2]{nekrashevych:book:selfsimilar} into equivalent language.  

\begin{theorem}
\label{thm:wr_determines_f}
Suppose $f: (S^2, P(f)) \to (S^2, P(f))$ and $g: (S^2, P(g)) \to (S^2, P(g))$ are Thurston maps.  Choose basepoints $b_f \in S^2-P(f), b_g \in S^2-P(g)$, and let $G_f, G_g$ be the corresponding fundamental groups of $S^2-P(f), S^2-P(g)$, respectively.  Then $f$ and $g$ are combinatorially equivalent via a homeomorphism $h: (S^2, P(f)) \to (S^2, P(f))$ if and only if the following diagram 
\[
\xymatrix{ 
G_f 
\ar[r]^{\Phi_f} \ar[d]_{h_*} & G_f^d \rtimes S_d 
\ar[d]^{h^d_*\times \id}  \\ 
G_g 
\ar[r]_{\Phi_g} 
& G_g^d \rtimes S_d 
}
\]
commutes up to inner automorphisms of $G_g^d\rtimes S_d$ induced by conjugation by elements of $S_d$. 
\end{theorem}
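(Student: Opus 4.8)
The plan is to deduce the statement from \cite[Theorem 6.5.2]{nekrashevych:book:selfsimilar} by translating both the topological side --- combinatorial equivalence of $f$ and $g$ --- and the algebraic side --- the commuting square --- into the common language of finite coverings equipped with a basepoint, a fiber identification, and connecting paths (a ``spider''). First I would note that since $f(P(f)) \subseteq P(f)$ one has $P(f) \subseteq f^{-1}(P(f))$, so restriction yields a genuine finite covering $f : S^2 - f^{-1}(P(f)) \to S^2 - P(f)$, and the lift of a loop at $b_f$, closed up along the $\lambda_x$, is again a loop in $S^2 - P(f)$; thus $\Phi_f$ is precisely the combinatorial record of this covering together with the choices $\Lambda$ and $\{\lambda_x\}$. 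The crucial point is that $\Phi_f$ carries strictly more information than the monodromy $\rho_f : G_f \to S_d$ of Theorem \ref{thm:hurwitz}: because the return words $g_i$ are valued in $G_f$ itself, they record how the extra punctures $f^{-1}(P(f)) \setminus P(f)$ sit relative to $P(f)$, hence the postcritical dynamics --- exactly the information distinguishing combinatorial equivalence from Hurwitz equivalence.

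For the forward implication, given a combinatorial equivalence $(h_0, h_1)$ with $h_0 \circ f = g \circ h_1$ and an isotopy $h_t$ from $h_0$ to $h_1$ fixing $P(f)$ pointwise, I would first normalize: choosing $b_f$ generically and isotoping, arrange $h_0(b_f) = b_g$. Then $g \circ h_1 = h_0 \circ f$ forces $h_1$ to send $f^{-1}(b_f)$ to $g^{-1}(b_g)$ and $f^{-1}(P(f))$ to $g^{-1}(P(g))$, so $h_1$ restricts to an isomorphism of coverings over $h_0$. Reading off how $h_1$ matches the chosen fiber identifications yields a permutation $\pi \in S_d$; chasing a standard generator $g_k$ through its $f$-lifts and applying $h_1$ then shows that $\Phi_g \circ h_*$ and $(h_*^d \times \id) \circ \Phi_f$ agree after the re-indexing by $\pi$, provided the connecting paths for $g$ are taken to be the $h_1$-images of those for $f$ (any other choice alters $\Phi_g$ only by an inner automorphism coming from $G_g^d$). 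This is the commutation of the square up to conjugation by $\pi \in S_d$.

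For the converse, suppose $h := h_0$ is a homeomorphism for which the square commutes up to conjugation by $\pi \in S_d$. Pull the covering $g : S^2 - g^{-1}(P(g)) \to S^2 - P(g)$ back along $h$ to a covering of $S^2 - P(f)$; the $S_d$-parts of the commuting square say this pulled-back covering has the same monodromy as $f$ (after the relabeling $\pi$), hence is covering-isomorphic to $f$, while the return-word parts say the isomorphism can be chosen to carry the spider of $f$ to that of $g$. Composing with $h$ yields a homeomorphism $h_1$ of punctured spheres with $h \circ f = g \circ h_1$, which extends over the punctures; the return-word data also forces $h_1$ to carry $P(f)$ to $P(g)$ in agreement with $h_0$ there, and to induce $h_*$ on $\pi_1(S^2 - P(f))$. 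By the Alexander method (equivalently, Dehn--Nielsen--Baer for the punctured sphere), any homeomorphism agreeing with $h_0$ on $P(f)$ and inducing the same map on $\pi_1$ is isotopic to $h_0$ rel $P(f)$; hence $(h_0, h_1)$ is a combinatorial equivalence.

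The step I expect to be the main obstacle is the last one in the converse: upgrading ``covering-isomorphic with matching monodromy'' to ``combinatorially equivalent Thurston maps'', i.e. showing the extended $h_1$ is isotopic to $h_0$ relative to $P(f)$ rather than merely equal to it on $P(f)$. This is precisely the content that the bare monodromy $\rho_f$ cannot see and that requires the full wreath recursion with its $G_f$-valued return words; it is also where the bookkeeping of basepoints, fiber identifications, and connecting paths must be carried out with enough care that exactly the ambiguity of conjugation by $S_d$, and nothing more, remains. All of this is packaged in \cite[Theorem 6.5.2]{nekrashevych:book:selfsimilar}; the work of our proof is to verify that the square in the statement is a faithful restatement of Nekrashevych's conjugacy relation on wreath recursions.
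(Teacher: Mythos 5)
Your proposal takes essentially the same route as the paper: the paper gives no independent proof of Theorem~\ref{thm:wr_determines_f}, asserting only that it is a restatement of \cite[Theorem 6.5.2]{nekrashevych:book:selfsimilar}, and your argument is precisely the translation between Nekrashevych's conjugacy relation on wreath recursions and the commuting square, carried out in more detail than the paper supplies. Your bookkeeping is sound, including the observation that changing connecting paths conjugates the recursion by elements of $G_g^d$ rather than of $S_d$ (a subtlety implicit in the theorem's phrase ``up to conjugation by elements of $S_d$'' that the paper leaves unaddressed).
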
 

L. Bartholdi \cite{bartholdi:program:img} has implemented a robust general version of the iterative algorithm on Teichm\"uller space used in the proof of Thurston's characterization theorem.  Given a wreath recursion for a Thurston map, it is designed to return either a numerical approximation for the corresponding rational map, or an algebraic description of an obstruction.

\subsection{Topological polynomials}

A Thurston map $f$ is a {\em topological polynomial} if has a branch point $c$ with $f(c)=c$ and $\deg(f,c)=\deg(f)$. Analogously, an abstract critical portrait is of polynomial type if it has a fixed-point of weight equal to its degree; a branch datum is of polynomial type if it has a point in its domain of weight equal to its degree.  The combinatorial theory of polynomials is much simpler than that for rational functions, due mainly to the existence of good normal forms (Hubbard trees; critical (external ray) portraits; invariant spiders; kneading automata; etc.) for various invariants; see \cite{nekrashevych:combinatorics}.

\section{Topological realizability conditions}

It is natural to consider branched coverings $S^2 \to S^2$ without regard to dynamics, as well as to consider abstract directed graphs representing potential branch data, augmented branch data, and critical portraits. In this section, we consider the problem of characterizing the images of the natural functors between the categories given in \S \ref{subsecn:summary}.  

\subsection{$\mathbf{TM}\to\mathbf{BC}$} A branched covering $F: S^2 \to S^2$ determines a map of pairs $F:(S^2, C(F)) \to (S^2, V(F))$ where $\#V(F) \leq \#C(F)$, so there exists a (many, really) homeomorphism $H$ yielding a map of pairs $H: (S^2, V(F))\to (S^2, C(F))$. The composition $f:=H\circ F$ will be a Thurston map and so every branched covering arises from some Thurston map.  So $\TM \to \BC$ is (on the level of objects) surjective.  A Hurwitz equivalence between the branched covers corresponding to two Thurston maps can not always be lifted to a Thurston equivalence, however. 

\subsection{$\mathbf{BC}\to\mathbf{BD}$ }
\label{subsecn:bc_to_bd}
 Fix an integer $d \geq 2$.  
While necessary, admissibility is not sufficient for realizability of branch data by a branched covering. 
\gap

\noindent{\bf Example:} The branch data of degree 4 given by the partitions 
\[ \{[3,1], [2,2], [2,2] \}\]
is not realized by any cubic branched covering. To see this, assume the contrary, and consider the associated Hurwitz factorization.  In this case, this is an ordered triple of permutations $(\alpha, \beta, \gamma)$ in $S_4$ generating a transitive subgroup and with $\alpha \beta \gamma = 1$. We may assume $\alpha$ is the three-cycle $(123)$ and $\beta$ is a product of two disjoint transpositions. By simultaneously conjugating by a power of $\alpha$ we may assume $\beta= (14)(23)$. But then $\gamma = \beta^{-1}\cdot \alpha^{-1}=(143)$ is not a product of transpositions. 
\gap

Indeed, a complete characterization is still unknown; for recent work see \cite{pervova:petronio:existenceII} and the references therein. 
To give a flavor of the known results, we mention the following. 
\bi
\item Any admissible branch datum in degrees 2, 3, 5, 7 is realizable \cite{eks}
\item In each non-prime degree there exist non-realizable admissible branch data \cite{eks}. 
\item Any polynomial branch datum is realizable \cite{khovanskii:zdravkovska:realizable} 

\ib
In what follows, we shall always assume tacitly that branch data is required to be admissible in the sense of \S \ref{subsecn:branchdata}.

\subsection{$\portrait \to \BD, \TM \to \portrait$} The preceding two paragraphs show that if some admissible abstract branch data $(C \to V, C \stackrel{\deg}{\longrightarrow} \N)$ is realizable, then it arises via restriction and forgetting identification of domain and range from some critical portrait of a Thurston map; this is \cite[Prop. 2.9]{kmp:census}. They also show that if the abstract admissible branch data associated to some portrait is realizable, then this portrait arises from a Thurston map.  

\subsection{$\TM \to \ABC, \portrait \to \ABD$} While natural in some nondynamical contexts, we do not pursue here the formulation of abstract augmented branched covers, abstract augmented branch data, and the characterization of such objects that arise from Thurston maps.  

\section{Holomorphic realizability} 

Fix an identification $S^2 = \IP^1=\rs$. 

Letting the prefix ``$\mathbf{H}$'' stand for holomorphic, we get three corresponding subcategories $\mathbf{H}\TM, \mathbf{H}\portrait$, $\mathbf{H}{\BC}$ where both the maps defining objects and those defining isomorphisms are in addition holomorphic. The objects of $\mathbf{H}\TM$ are thus postcritically finite rational maps and the morphisms elements of $\Aut(\IP^1)$ giving conjugacies.  The objects of $\mathbf{H}\BC$ are rational maps; there is a morphism $f \to g$ if $f$ and $g$ differ by pre- and post-composition by (typically) distinct elements of $\Aut(\IP^1)$. 

The lifting of complex structures under coverings, the removable singularities theorem, and the uniformization theorem  imply that any branched covering is, up to pre-composition with homeomorphisms, a holomorphic map; this is sometimes attributed to R. Thom. It follows that a topologically realizable branch datum is isomorphic to the branch datum of an element of $\mathbf{H}\BC$. 

Thurston's fundamental characterization and rigidity theorem \cite{DH1} characterizes elements of $\mathbf{H}\TM$ among elements of $\TM$, and asserts that apart from the ubiquitous family of flexible Latt\`es counterexamples, if $f, g \in \mathbf{H}\TM$ are isomorphic as elements of $\TM$, then they are isomorphic as elements of $\mathbf{H}\TM$. 

An unpublished theorem of Berstein-Levy  \cite{hubbard:teichvol2}  gives a sufficient condition for a topological polynomial to be combinatorially equivalent to a postcritically finite complex polynomial.  

\begin{theorem}[Berstein-Levy] 
\label{thm:berstein-levy}
Suppose $f$ is a topological polynomial. If each cycle of its critical  orbit portrait contains a critical point, then $f$ is combinatorially equivalent to a complex polynomial. 
\end{theorem}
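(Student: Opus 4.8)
The plan is to apply Thurston's characterization theorem by exhibiting, for a topological polynomial $f$ each of whose critical-orbit-portrait cycles contains a critical point, an explicit combinatorial model and then ruling out the existence of a Thurston obstruction. Since $f$ is a topological polynomial, it has a totally ramified fixed point; normalize so this is $\infty$, which we may delete. The finite postcritical set $P = P(f) \setminus \{\infty\}$ then carries the structure of a forest under the dynamics: each point of $P$ eventually lands in a cycle, and by hypothesis every such cycle contains a critical point. First I would build a \emph{spider} (in the sense of Hubbard–Schleicher) or, equivalently, an invariant arc system / Hubbard-tree-like object: choose disjoint embedded arcs (legs) from $\infty$ to each point of $P$, pull them back, and use the combinatorial data of $f$ on $P$ to isotope the preimage legs back to the original legs. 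The hypothesis that each cycle contains a critical point is exactly what is needed to run the standard argument that such an invariant spider exists and that the associated "spider map" on the relevant Teichmüller-space-like object has a fixed point, i.e. that $f$ is realized.

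Alternatively — and this is probably the cleaner route to write — I would argue directly that $f$ admits no Thurston obstruction. A Thurston obstruction is an $f$-invariant multicurve $\Gamma$ in $S^2 \setminus P(f)$ with leading eigenvalue $\lambda(\Gamma) \geq 1$ of the associated transition matrix $f_\Gamma$. Because $f$ is a topological polynomial with totally ramified point $\infty$, one first observes that any essential curve $\gamma$ in $S^2 \setminus P(f)$ separates $P$ into two nonempty pieces, and no curve can separate $\infty$ from a nonempty subset of $P$ in a way that survives pullback while gaining weight: the key point is that $f^{-1}(\infty) = \{\infty\}$, so preimages of curves "do not wrap extra times" around $\infty$, and one gets a bound forcing the entries of $f_\Gamma$ to be controlled. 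Then I would use the cycle-contains-a-critical-point hypothesis: given an invariant multicurve, follow a curve around its $\Gamma$-cycle; at some stage a preimage component must bound a disk containing a critical point whose forward orbit re-enters the configuration, and a counting/degree argument shows the corresponding diagonal contribution to $f_\Gamma$ is strictly less than $1$, so $\lambda(\Gamma) < 1$. Hence no obstruction exists and Thurston's theorem applies.

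The main obstacle, I expect, is making the "no curve effectively wraps around the totally ramified fixed point, and every $\Gamma$-cycle eventually meets a disk carrying a critical point" argument fully rigorous for \emph{multicurves} rather than single curves — that is, controlling the combinatorics of the transition matrix $f_\Gamma$ when several curves in $\Gamma$ interact and preimage components get distributed among them. One has to track both the number of essential preimage components homotopic to each $\gamma_j$ and the degrees with which $f$ restricts to the annuli between them, and conclude the Perron–Frobenius eigenvalue is $<1$; this is the technical heart and is where the polynomial hypothesis (via $\infty$ being totally ramified and fixed) together with the cycle-hits-a-critical-point hypothesis must be combined. A clean way to package it is to pass to the quotient where $\infty$ is filled in and invoke the classical fact that a "polynomial-like" branched cover with all critical cycles containing critical points has a nonnegative matrix $f_\Gamma$ each of whose columns sums to at most $1$, with strict inequality somewhere along each cycle; this yields $\lambda(\Gamma)<1$ by a standard Perron–Frobenius estimate, completing the proof.
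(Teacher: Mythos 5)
First, a caveat about the comparison you asked for: the paper offers no proof of this statement at all. It is quoted as an unpublished theorem of Berstein and Levy, with a citation to Hubbard's book \cite{hubbard:teichvol2}, so there is no argument in the paper to measure yours against; I can only assess your outline against the standard proof of this result.

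Your overall strategy --- invoke Thurston's characterization and rule out obstructions using the totally ramified fixed point at $\infty$ together with the hypothesis that every cycle of the portrait contains a critical point --- is the right one, but the technical heart is exactly the step you defer, and the ``clean packaging'' you propose for it is false. It is not true that the transition matrix $f_\Gamma$ of an invariant multicurve for a topological polynomial has column sums at most $1$: the $j$th column sum is $\sum_\delta 1/\deg(f\colon\delta\to\gamma_j)$ over the essential nonperipheral preimage components $\delta$ of $\gamma_j$ that are homotopic into $\Gamma$, and while the degrees $\deg(f\colon\delta\to\gamma_j)$ sum to at most $d$, their reciprocals need not sum to at most $1$ --- a single degree-one preimage component already contributes $1$, and there may be several components. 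So the Perron--Frobenius estimate you invoke does not get off the ground. The missing idea is the Levy-cycle extraction lemma: for a topological polynomial, any Thurston obstruction contains a \emph{Levy cycle}, i.e.\ a cycle of curves $\gamma_1,\dots,\gamma_k$ in $\Gamma$ such that each $\gamma_i$ has a preimage component homotopic to $\gamma_{i-1}$ rel $P(f)$ on which $f$ restricts with degree one. Its proof is where the polynomial hypothesis is really used (the curves bound disks on the finite side, preimage components of those disks are again disks up to critical points, and a spectral radius $\geq 1$ forces a cycle of degree-one entries). Once a Levy cycle is in hand, the dynamical hypothesis enters cleanly: the disks bounded by the $\gamma_i$ map homeomorphically around the cycle, hence contain no critical points, and the postcritical points inside them are permuted in cycles of the critical orbit portrait containing no critical point --- contradicting the assumption. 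Without this lemma (or an equivalent substitute) your second route has a genuine gap; and your first (spider) route needs the contraction of the pullback map, which is established by essentially the same analysis, so it does not avoid the issue.
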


\begin{corollary}
\label{cor:twists_realizable}
Suppose $f$ is a topological polynomial. If each cycle of its critical  orbit portrait contains a critical point, then for each homeomorphism $h: (S^2, P(f)) \to (S^2, P(f))$ fixing the point at infinity, the map $h \circ f$ is combinatorially equivalent to a complex polynomial. 
\end{corollary}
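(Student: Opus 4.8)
The plan is to deduce this from the Berstein--Levy theorem (Theorem~\ref{thm:berstein-levy}), applied to the map $g:=h\circ f$.

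I would first record that $g$ is again a topological polynomial. Since a homeomorphism has local degree $1$ everywhere, $C(g)=C(f)$ with the same local degree function; normalizing so that the totally ramified fixed branch point of $f$ is $\infty$, one gets $g(\infty)=h(f(\infty))=h(\infty)=\infty$ and $\deg(g,\infty)=\deg(f,\infty)=\deg(g)$, so $\infty$ is a totally ramified fixed point of $g$.

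The heart of the matter is to check that every cycle of the critical orbit portrait of $g$ contains a critical point, after which Theorem~\ref{thm:berstein-levy} applies to $g$ and finishes the proof. The mechanism is this: $h$ preserves $P(f)$ and $P(f)$ is forward invariant under $f$, so $P(f)$ is forward invariant under $g=h\circ f$, whence $P(g)=\bigcup_{n\ge1}g^{n}(C(f))\subseteq P(f)$ and $Z(g)\subseteq Z(f)$; one then needs to understand the cycles of $g|_{Z(g)}$. In the case where $h$ fixes $P(f)$ pointwise this is immediate: $g$ and $f$ then agree on $P(f)$, hence on all of $Z(f)$, so $g$ and $f$ have literally the same critical orbit portrait, and the hypothesis of Theorem~\ref{thm:berstein-levy} for $g$ is exactly the hypothesis for $f$. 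Likewise if $h$ merely fixes $\infty$ but $f$ is critically fixed, then $P(f)=C(f)$, so $Z(g)=C(g)$ and the hypothesis is automatic.

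The step I expect to be the genuine obstacle is the fully general case, where $h$ only fixes $\infty$ and $P(f)\supsetneq C(f)$: post-composition by $h$ can carry a periodic orbit of $f$ that met $C(f)$ to a periodic orbit of $g$ that avoids $C(g)$, so the hypothesis of Theorem~\ref{thm:berstein-levy} is not literally inherited by $g$. To cover this I would drop the attempt to apply Theorem~\ref{thm:berstein-levy} to $g$ directly and instead show $g$ is unobstructed: use Theorem~\ref{thm:berstein-levy} on $f$ alone to write $f\simeq P$ for a postcritically finite complex polynomial $P$, transport the twist to get $g\simeq h'\circ P$, and observe that the Thurston pullback map of $h'\circ P$ on Teichm\"uller space is $\sigma_{P}$ post-composed with the isometry of Teichm\"uller space induced by the mapping class $[h']$; one then shows this composition has a fixed point --- equivalently, that $h'\circ P$ carries no Levy cycle --- exploiting the rigidity of the complex polynomial $P$ (its invariant spider, or its Hubbard tree) together with the hypothesis that every cycle of the critical orbit portrait of $P$ is anchored by a critical point. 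Making this argument go through when $\sigma_{P}$ is not a global strict contraction --- which can occur when $P$ has periodic critical points --- is where the real content would lie; failing that, the fallback is to rerun Levy's no-Levy-cycle argument for $g$ from scratch, tracking how post-composition with $h$ interacts with the inductive construction of (or the obstruction to) a Levy cycle.
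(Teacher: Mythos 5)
The paper prints no proof of this corollary; the evidently intended argument is the one you begin with, namely to check that $g:=h\circ f$ is again a topological polynomial whose critical orbit portrait has a critical point in every cycle, and then to quote Theorem \ref{thm:berstein-levy}. Your first two paragraphs do this correctly where it can be done: $C(g)=C(f)$ with the same local degrees, $\infty$ remains a totally ramified fixed point, and the hypothesis is inherited when $h$ fixes $P(f)$ pointwise, when $f$ is critically fixed, and more generally whenever every critical point of $f$ is $f$-periodic (for then $f|_{P(f)}$ is a bijection of $P(f)$, hence so is $g|_{P(f)}$, and each portrait cycle of $g$ is the $g$-cycle of a periodic critical point). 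You are also right that the inheritance genuinely fails once $f$ has strictly preperiodic critical points. Concretely: let $f$ be a cubic topological polynomial with simple critical points $c_1,c_2$ and portrait $c_1\mapsto p_1\mapsto q\mapsto c_1$, $c_2\mapsto p_2\mapsto q$, so that every cycle meets $C(f)$ and $P(f)=\{\infty,c_1,p_1,p_2,q\}$. Take $h$ fixing $\infty$ and $q$ and cycling $c_1\mapsto p_1\mapsto p_2\mapsto c_1$. Then $g=h\circ f$ sends $c_2\mapsto c_1\mapsto p_2\mapsto q\mapsto p_1\mapsto q$, and its critical orbit portrait contains the critical-point-free cycle $\{p_1,q\}$. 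Theorem \ref{thm:berstein-levy} therefore does not apply to $g$, and by the result of Kelsey quoted immediately after the corollary, portraits with such a cycle are precisely the kind that can support obstructed topological polynomials; so the danger is not merely formal.

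The genuine gap is thus in your third paragraph, where the general case is deferred rather than proved. The proposed repair --- realize $f\simeq P$ by Theorem \ref{thm:berstein-levy} and find a fixed point of $\sigma_P$ composed with the Teichm\"uller isometry induced by the twist --- does not follow from soft fixed-point considerations: $\sigma_P$ is in general only a weak contraction, and the composite of a weak contraction with an isometry on an unbounded space need have no fixed point, a difficulty you yourself flag. Closing the gap requires either rerunning Levy's analysis to exclude Levy cycles for $g$ directly (which cannot succeed unconditionally, by the example above together with Kelsey), or adding the hypothesis that every critical point of $f$ is periodic --- under which your argument is complete and which covers the critically fixed maps that are the actual subject of the paper. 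As stated, the corollary does not reduce to Theorem \ref{thm:berstein-levy} by the one-line argument it appears to invite, and your proposal, while correctly diagnosing this, does not supply the missing step.
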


Kelsey \cite{kelsey:schemes} provides a partial converse.  If the critical orbit portrait of a topological polynomial has a cycle not containing a critical point, then under very general further conditions on the portrait, there exists an obstructed topological polynomial with this portrait; there may (and often do) exist complex polynomials realizing this portrait as well.

Not every critical portrait arises from a complex polynomial. 
\gap

\noindent{\bf Example.} Suppose the cubic critical portrait consists of four fixed-points each of local degree two. Then this portrait does not arise from any cubic rational map: the Holomorphic Fixed-Point  Formula \cite[Theorem 12.4]{milnor:dynamics} asserts that if all fixed-points are simple then 
\[ \sum_{z \in \Fix(f)} \frac{1}{1-f'(z)} = 1.\]
A postcritically finite rational map has all fixed-points simple (multiple fixed-points are parabolic fixed-points; their immediate basins necessarily contain critical points with infinite forward orbit; see {\em op. cit.}, so a cubic rational map cannot have four fixed critical points.  
\gap

A general characterization of holomorphically realizable critical orbit portraits seems at present far out of reach. Even the enumeration of critical orbit portraits is challenging; cf. \cite{kmp:census} where it is shown by brute force exhaustion that there are 134 admissible distinct cubic critical orbit portraits which have all cycles attractors, are non-polynomial type, and have four or fewer postcritical points. 

\section{Uniqueness of realizations} 

Here, we consider the fibers of the natural functors in \S \ref{subsecn:summary}. That is, we discuss when two topological invariants have the same set-theoretic invariants.

\subsection{$\TM \to \portrait$} In \cite[Prop. 2.12 ]{kmp:census} there is an explicit example given of a critical orbit portrait which arises from infinitely many distinct combinatorial classes. The idea is simple: one finds first a Thurston map $f_1$ and a subset $\Sigma$  with  $\Sigma \hookrightarrow S^2 \setminus P(f)$ injective on fundamental group and satisfying $f_1|_\Sigma = \id_\Sigma$.  Postcomposing $f_1$ with homeomorphisms $h$  supported on $\Sigma$ and representing distinct conjugacy classes of mapping classes in $\PMod(S^2, P(f_1))$ yields the desired collection of Thurston maps $f$. 

Another explicit class of such examples is given in \cite[\S Prop. 6.10]{bartholdi:nekrashevych:twisted}. 

And, unsurprisingly, there are other families of examples coming from the exceptional class of Thurston maps with Euclidean orbifold. 

In constrast, Thurston's fundamental rigidity theorem implies that apart from the flexible Latt\`es examples, there are up to holomorphic conjugacy only finitely many postcritically finite rational maps realizing a given critical orbit portrait; see \cite[Cor. 3.7]{kmp:census}. 

\subsection{$\BC \to \BD$} For ``generic'' branch data (all critical points simple, i.e. of local degree two, and mapping to distinct critical values) a classical result of Clebsch and L\"uroth asserts that there is a unique Hurwitz class realizing the data; see \cite[Theorem 3.4]{berstein:edmonds:construction} for a detailed proof in modern language.  Indeed, a concrete model for a representative can be given by starting with the identity map and ``blowing up'' $d-1$ disjoint arcs; see \S \ref{subsecn:blow}. 

In principle, Hurwitz equivalence is algorithmically checkable. Indeed, given Hurwitz data for branched covers $f$ and $g$, starting with the data for $f$ one may randomly apply the elementary moves corresponding to generators of the mapping class group and check to see if the result is equivalent to the data for $g$. This can be done very quickly, and if $f$ and $g$ are equivalent it seems likely that one may find such an equivalence in a reasonable amount of time if the degree is not too large.   

It is relatively easy to find examples of Hurwitz inequivalent covers with the same branch data. 
\gap

\noindent{\bf Example 1:} Consider the set of so-called {\em clean Belyi polynomials} $f: \C \to \C$ defined by the conditions that $V(f) = \{0,1\}$ and $\deg(f,c)=2$ for each $c \in f^{-1}(1)$. Such a polynomial has a {\em dessin}, the bipartite planar tree whose underlying set is $f^{-1}[0,1]$ and whose vertices are $f^{-1}(\{0,1\})$, with preimages of $0$ being black and preimages of $1$ being white. It is easy to prove, using triviality of the pure mapping class group of the thrice-marked sphere,  that if $f, g$ are two such polynomials each having at least one finite critical point of local degree at least $3$, then $f, g$ are Hurwitz equivalent if and only if the covering spaces of $\C\setminus\{0,1\}$ they induce are isomorphic. This, in turn, holds if and only if their dessins are isomorphic as planar embedded bipartite trees. So to find examples it suffices to draw two such dessins which are isomorphic as abstract, but not as planar, trees.  An explicit example may be found in \cite[\S 4]{kmp:dessins}.  The point is that the branch data records only the cycle structure of the monodromy of peripheral generators, while the conjugacy class of the subgroup of the symmetric group determined by monodromy is an invariant of the Hurwitz class. 
\gap 

It is  more difficult to give, say, primitive such pairs of examples with more branch values. 
\gap

\noindent{\bf Example 2:} The following is due to Jones and Zvonkin \cite{jones:zvonkin:cacti}.  The polynomial branch data of degree 7 corresponding to the partitions
\[ \{ [7], [2,2,1,1,1], [2,2,1,1,1] \}\]
is topologically realizable by precisely 4 distinct Hurwitz classes. 
\gap

In addition to the case of generic branch data, there are several partial results giving conditions on the branch data for there to exist a unique Hurwitz class.   Khovanskii and Zdravkovska  \cite[\S 4] {khovanskii:zdravkovska:realizable} use analytical methods to show this is the case for topological polynomials if the sum of the local degrees at critical points is not too large compared to the degree.  Relevant for the case of Thurston maps with fixed critical points is the following recent result of Liu-Osserman; they give a proof but assert that this result is well-known. 

\begin{theorem}
\label{thm:purecycle}
Suppose a given admissible branch datum has the property that each associated partition has the form $[k, 1, 1, \ldots, 1]$. Then any two branched coverings realizing this datum are Hurwitz equivalent. 
\end{theorem}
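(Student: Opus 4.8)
The plan is to reduce the claim to a transitivity statement about the action of the braid group $B_n$ on Hurwitz factorizations $(\sigma_1, \ldots, \sigma_n)$ in which each $\sigma_i$ is a single $k_i$-cycle (since a partition of the form $[k,1,\ldots,1]$ means the corresponding permutation is, up to relabeling, a single cycle of length $k_i$, possibly trivial if $k_i = 1$). By Theorem \ref{thm:hurwitz}, it suffices to show that any two such factorizations realizing the given datum lie in the same $B_n$-orbit up to simultaneous conjugation by $S_d$. First I would dispose of the trivial entries: if some $k_i = 1$, then $\sigma_i = \id$ and one may simply delete that value and reindex, so without loss of generality every $k_i \geq 2$ and we are looking at factorizations $\sigma_1 \cdots \sigma_n = \id$ of the identity as a product of $n$ cycles of lengths $k_1, \ldots, k_n$, generating a transitive subgroup of $S_d$, with $\sum (k_i - 1) = 2(d-1)$.

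The key combinatorial input is an induction on $d$ (equivalently on $n$, using Riemann-Hurwitz). For the base case one can take $n = 1$: then $k_1 = 2$ forces nothing, but more usefully $n = 2$ with $k_1 = k_2 = d$ gives a unique orbit since $\sigma_2 = \sigma_1^{-1}$ and all $d$-cycles are conjugate. For the inductive step, given a factorization into cycles, the standard technique is a "cut move": use a braid generator $\beta_i$ (which replaces the adjacent pair $(\sigma_i, \sigma_{i+1})$ by $(\sigma_i \sigma_{i+1} \sigma_i^{-1}, \sigma_i)$) to maneuver two of the cycles into position so that their product can be analyzed. The crucial case is when two cycles share exactly one point in their support — then their product is a single cycle whose length is the sum minus one, and one checks that up to a braid move and conjugation one may assume a normalized overlap; if two cycles are disjoint, braid moves let one slide them apart and then an induction on the connected components of the "overlap graph" (using transitivity to guarantee enough overlaps exist) reduces the total number of factors. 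Either way one peels off one cycle, applies the inductive hypothesis to the smaller factorization obtained on the complementary part of $\{1, \ldots, d\}$, and reassembles.

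The main obstacle, and the place where care is genuinely needed, is the bookkeeping in the reduction step: when one "merges" or "splits" cycles via braid moves, one must verify that transitivity is preserved at each stage and that the resulting smaller datum is still admissible of the required pure-cycle form, so that the inductive hypothesis actually applies. Concretely, after using braid moves to bring two overlapping cycles $\sigma_i, \sigma_j$ adjacent and replacing them by their product (a cycle) together with a free generator, one has reduced $n$ by one and $\sum(k_i - 1)$ by the shared length, i.e. effectively lowered the degree; but one must check the free generator can be conjugated into a standard position compatible with the remaining factorization — this is where simultaneous conjugation by $S_d$ is used and where one must be careful not to lose track of the already-normalized part. An alternative to running this induction by hand is to invoke the classical connectedness of Hurwitz spaces for pure-cycle (a.k.a. "fully ramified at all but one point" type) data — this is indeed a known result, and the cleanest write-up would cite such a source while sketching the cut-move argument as above; since the paper's Theorem \ref{thm:hurwitz} already translates Hurwitz equivalence into exactly the $B_n$-orbit language, the only work left after citing connectedness is the (routine) deletion of trivial partition parts described in the first paragraph.
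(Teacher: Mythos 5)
The paper does not actually prove this statement: it is imported verbatim from Liu and Osserman \cite{liu:osserman:irreducible}, with the remark that ``they give a proof but assert that this result is well-known.'' So the fallback in your final paragraph --- translate Hurwitz equivalence into $B_n$-orbits of factorizations via Theorem \ref{thm:hurwitz}, dispose of the trivial parts $k_i=1$ (which in fact cannot occur, since admissibility requires every branch value to have a genuine critical point above it), and cite the connectedness of genus-zero pure-cycle Hurwitz spaces --- coincides exactly with what the paper does, and that part of your proposal is fine.

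The direct cut-move induction you sketch, however, is not a proof as written; it is an outline of the content of the cited theorem, and the gap sits precisely at the step you yourself flag as ``crucial.'' To peel off a factor you need to know that, after braid moves and simultaneous conjugation, some adjacent pair of cycles can be brought to overlap in \emph{exactly one} point of their supports, so that their product is again a single cycle and the reduced datum is again an admissible pure-cycle datum of lower degree. You assert this normalization but do not establish it: you do not rule out the configuration in which every pair of non-disjoint cycles shares two or more points of support (in which case their product is not a single cycle and the induction does not close), and the ``induction on connected components of the overlap graph'' for the disjoint case is never actually set up --- transitivity only gives connectedness of that graph, not the existence of a one-point overlap. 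Controlling exactly this, using the Riemann--Hurwitz count $\sum_i(k_i-1)=2(d-1)$ in an essential way, is the substance of Liu--Osserman's argument. As it stands your sketch must either prove that lemma or collapse into the citation; the paper simply does the latter.
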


That such branch data is realizable seems to be well-known; our Theorem \ref{theorem:ptomg} gives a combinatorial proof.   

\subsection{$\ABC \to \ABD$} It is easy to find examples of augmented branched covers which are Hurwitz equivalent, but not augmented Hurwitz equivalent; the following is taken verbatim from \cite[\S 2.9]{koch:criticallyfinite}. We warn the reader that the term ``Hurwitz equivalent'' used in \cite{koch:criticallyfinite} is not the same as that used here; Koch's notion is here (see below) called ``purely augmented Hurwitz equivalent''.  
\gap

\noindent{\bf Example.} Let $F_\alpha(t)=(1-(1-\alpha)t)^d$ where $\alpha \neq 1$ is a $d$th root of unity. Each $F_\alpha$ is a unicritical polynomial of degree $d$, with critical point $1/(1-\alpha)$, such that 
\[ F_\alpha(\frac{1}{1-\alpha})=0, \ F_\alpha(0)=1, \ \mbox{ and } F_\alpha(1)=1.\]
Define the sets $A:=\{0,1,\infty\}$ and $B:=\{0,1,\infty\}$. Now suppose $\alpha, \beta$ are distinct $d$th roots of unity. Suppressing mention of local degrees when they are equal to $1$, the corresponding augmented branch data for $F_\alpha$ and $F_\beta$ are given respectively by 
\[ \{0 \mapsto 1, \; 1 \mapsto 1, \; \frac{1}{1-\alpha} \stackrel{d}{\mapsto} 0, \; \infty \stackrel{d}{\mapsto}\infty\}\]
and
\[ \{ 0 \mapsto 1, \; 1 \mapsto 1, \; \frac{1}{1-\beta} \stackrel{d}{\mapsto} 0, \; \infty \stackrel{d}{\mapsto}\infty.\}\]
Clearly, the isomorphism type of this augmented branch data depends only on $d$, and the automorphism group of this augmented branch data (defined in the obvious way) is generated by two commuting involutions: one which swaps $0\leftrightarrow 1$ in $A$ leaving $B$ fixed pointwise, while the other interchanges $\frac{1}{1-\alpha} \to 0$ with $\infty \to \infty$.

For varying choices of $\alpha$, the maps 
\[ F_\alpha: (\P^1, A) \to (\P^1, B)\]
are all ordinary Hurwitz equivalent (in the sense of this paper) to each other, since the underlying branched coverings have the same degree and are ramified only over two points.  However, if $\alpha \neq \beta$ then $F_\alpha$ and $F_\beta$ will not in general be augmented Hurwitz equivalent  in the sense of \S \ref{subsecn:augbc}.  

To see this, suppose  the pair $(h_0, h_1)$ gives such an equivalence.  We must have $h_0(1)=1$. The (non-pure) mapping class group $\Mod(\P^1, B)$ is represented by the six elements of $\Aut(\P^1)$ preserving the set $\{0,1,\infty\}$, which includes $z \mapsto 1/z$. This element lifts under any map $F_\beta$, so without loss of generality we may assume further that $h_0(0)=0$ and $h_0(\infty) = \infty$, so that $h_0(z)=\id$. Since $h_0, F_\alpha, F_\beta$ are holomorphic, so is $h_1$.  From the definition of augmented Hurwitz equivalent, we must have $h_1(\infty)=\infty$, so $h_1$ is affine, and we must also have $h_1(\{0,1\})=\{0,1\}$, so either $h_1(z)=z$ or $h_1(z)=1-z$. But we must also have $h_1(\frac{1}{1-\alpha})=\frac{1}{1-\beta}$, so $\alpha \neq \beta$ implies $\frac{1}{1-\alpha}+\frac{1}{1-\beta}=1$. We conclude that $F_\alpha, F_\beta$ are augmented Hurwitz equivalent only if $\beta=\overline{\alpha}$.   A straightforward calculation shows that in this latter case the maps $F_\alpha, F_\beta$ are in fact augmented Hurwitz equivalent.

We remark here that the arguments of \cite[\S 2.9]{koch:criticallyfinite} show that $F_\alpha, F_\beta$ are never purely augmented Hurwitz equivalent if $\alpha \neq \beta$, and so these examples show that there is indeed a  distinction between the two notions. 
\gap

\noindent{\bf Remark:} In many applications, including dynamical ones, in order to avoid singularities caused by configurations with symmetries, a stronger notion of Hurwitz equivalence is used which requires in essence that equivalences preserve a labelling of points. 
This is the point of view taken in \cite{koch:criticallyfinite}. 

\section{Proof of Theorem \ref{theorem:aretwists} }

Suppose $F, G$ are two Thurston maps whose critical points are all fixed, and suppose the number $n$ and multiplicities of their critical points coincide, so that both have isomorphic branch data.  Let $V=\{\exp(2\pi i k/n), k=1, \ldots, n\}$ be as in \S \ref{subsecn:hurwitzfactor}. 

By varying $F, G$ within their respective combinatorial classes, we may assume $C(F)=C(G)=V(F)=V(G)=P(F)=P(G)=P:=V$, and that the local degrees of $F$ and $G$ each each $c \in C$ coincide. 
By \cite[Proposition 1.1(a)]{liu:osserman:irreducible}, the Hurwitz factorizations associated to $F$ and $G$ lie in the same orbit of the pure braid group. It follows that there exist orientation-preserving homeomorphisms $h_0, h_1: (S^2, P) \to (S^2, P)$ such that $ h_0\circ F = G \circ h_1$; since the braid is pure, we have $h_0|P=\id_P$.  The functional equation and the fact that over each $v \in V$ the fiber of $G$ contains a unique critical point then implies that $h_1|P=\id_P$ as well and so both $h_0, h_1$ lie in $\PMod(S^2, P)$. 

Set $h:=h_1^{-1}\circ h_0$. Then 
\[ h\circ F = h_1^{-1}\circ G \circ h_1\]
which shows that $G$ is combinatorially equivalent to a twist of $F$. 
\qed

\section{From planar multigraphs to Thurston maps} 

\subsection{Planar multigraphs} Here, a {\em multigraph} $\GGG$ is a finite graph $(V,E)$ with a nonempty set of vertices, without loops (edges from a vertex to itself), without isolated vertices (i.e. each vertex is incident to some edge), and with multiple (``parallel'') edges joining distinct vertices permitted.  The underlying {\em simple graph} $S\GGG$ is the graph obtained by removing any redundant edges.  We encode $\GGG$ by weighting each edge of $S\GGG$ according to its multiplicity. $\GGG$ is {\em connected} if $S\GGG$  is connected. 

A multigraph is $\GGG$ {\em planar} if its underlying simple graph $S\GGG$ can be drawn in the plane without crossing of edges. We will always consider planar multigraphs to be so embedded. Two planar multigraphs are here termed {\em isomorphic} if there is an orientation-preserving homeomorphism sending one to the other and respecting the multiplicity of edges. 

The case of $\#V=2$ will be elementary and largely uninteresting; we assume throughout that $\#V \geq 3$ to avoid repeated mention of cumbersome special cases.  In this case, each face of $S\GGG$ has at least three vertices on its boundary.  In contrast, $\GGG$ may have bigons: faces bounded by a pair of parallel edges.

\subsection{Arcs} Suppose $P \subset S^2$ is a finite set. An {\em arc} in $(S^2, P)$ is an embedded arc in $S^2$ with endpoints in $P$ and which otherwise avoids $P$; two such arcs are {\em homotopic} if they can be continuously deformed one to the other through such arcs. If $F: (S^2, A) \to (S^2, B)$ is an augmented branched covering and $\beta$ is an arc in $(S^2, B)$ then (abusing terminlogy) by a {\em preimage} $\tilde{\beta}$ of $F^{-1}(\beta)$ we will mean the closure of a component of $F^{-1}(\beta \setminus \mbox{endpoints}(\beta))$; we say $\tilde{\beta}$ is {\em essential} if it is an arc in $(S^2, A)$.  If $A=B=P$ then an arc $\alpha$ is {\em invariant} under pullback by $F$ if it has a preimage homotopic to itself in $(S^2, P)$.

\subsection{Blowing up an arc and proof of Theorem 3} 
\label{subsecn:blow}

For details, see \cite{kmp-tan:blow}. The process of {\em blowing up an arc} is a surgery procedure for modifying a given Thurston map to produce another Thurston map of higher degree. It is based on the following idea.

Suppose $F: (S^2, P) \to (S^2, P)$ is an augmented branched covering; we allow the case $\deg(F)=1$. Suppose $\alpha$ is an arc in $(S^2, F^{-1}(P))$, and suppose $F|\alpha$ is a homeomorphism onto its image.  To modify $F$ by blowing up along $\alpha$ to obtain a new map $G$, we cut the domain along $\alpha$, open up the slit, map the complement via $F$, and map the disk homeomorphically to the complement of $F(\alpha)$ in the obvious way; see \cite[Figure 3]{kmp-tan:blow} for details.  The combinatorial class of $G$ depends only on the pair $(F,\alpha)$ in the obvious way; an equivalence $F \to F'$ sending $\alpha \to \alpha'$ yields an equivalence $G \to G'$ between the corresponding modified maps, so the process is natural.

One can blow up the same arc multiple times as well, by mapping the complement in the obvious many-to-one fashion.  Up to homotopy, the result is equivalent to blowing up each member in a family of parallel homotopic arcs joining the same endpoints; this point of view is sometimes convenient. Thus, if $\GGG$ is a planar multigraph, there is an associated Thurston map $F_\GGG$ of degree equal to one plus the number (counted with multiplicity) of  edges of $\GGG$.  Clearly, a planar isomorphism $\GGG_1 \to \GGG_2$ induces a combinatorial equivalence $F_{\GGG_1} \to F_{\GGG_2}$.  So we have a procedure that starts with a planar multigraph $\GGG$ and builds a Thurston map $F=F_\GGG$ for which $\Crit(F) \subset \Fix(F)$. 
\gap 

Suppose $F=F_\GGG$ is obtained by blowing up edges in a graph $\GGG$; let $P=P(F)$.  Suppose vertices $v, w$ are joined by an edge $e$ of $S\GGG$ which is blown-up $m(e)$ times; we regard $e$ as an arc in $(S^2, P)$.  By construction, the arc $e$ has $1+m(e)$ components of its preimage that are arcs homotopic to $e$ in $(S^2, P)$. 
Since $m(e) \geq 1$, from \cite[Cor. 3]{kmp-tan:blow} we have that up to homotopy no Thurston obstructions can intersect $e$; from \cite[Cor. 4]{kmp-tan:blow} we have that likewise no invariant arc meets $e$. 
The former observation implies the following, which is a special case of \cite[\S 5.2]{kmp-tan:blow}. 
\begin{theorem}
\label{thm:blowupgraph} 
$F_G$ is combinatorially equivalent to a rational map if and only if $G$ is connected. 
\end{theorem}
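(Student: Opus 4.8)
Both implications go through Thurston's topological characterization of rational maps \cite{DH1}. The first thing I would check is that $F_\GGG$ has hyperbolic orbifold, so that no Latt\`es exception can interfere: since every critical point of $F_\GGG$ is a fixed point of local degree $\geq 2$ and $V(\GGG)$ is in bijection with $\Crit(F_\GGG)$, the postcritical set is $P:=P(F_\GGG)=\Crit(F_\GGG)=V(\GGG)$, each postcritical point has orbifold weight $\infty$, and $\#P=\#V(\GGG)\geq 3$; hence the orbifold Euler characteristic is $2-\#P<0$. By Thurston's theorem it then suffices to show that $F_\GGG$ admits a Thurston obstruction if and only if $\GGG$ is disconnected.

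\textbf{Sufficiency ($\GGG$ connected implies $F_\GGG$ rational).} Suppose, for contradiction, that $\Gamma$ is a Thurston obstruction for $F_\GGG$, and pick $\gamma\in\Gamma$. For each edge $e$ of $S\GGG$ the map $F_\GGG$ blows $e$ up $m(e)\geq 1$ times, so by \cite[Cor.~3]{kmp-tan:blow} the geometric intersection number of $\gamma$ with $e$ is zero. Realizing $\gamma$ by a taut (say geodesic, for a complete hyperbolic structure on $S^2\setminus P$) representative, $\gamma$ is then disjoint from every edge, hence from $\GGG$. Because $\GGG$ is connected and embedded in $S^2$, each component of $S^2\setminus\GGG$ is an open disk, and since the points of $P=V(\GGG)$ lie on $\GGG$, these disks contain no point of $P$. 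Thus $\gamma$ lies in a punctureless open disk and is therefore inessential, contradicting $\gamma\in\Gamma$. So $F_\GGG$ has no Thurston obstruction and, having hyperbolic orbifold, is combinatorially equivalent to a rational map.

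\textbf{Necessity ($\GGG$ disconnected implies $F_\GGG$ not rational).} Write $\GGG=\GGG_1\sqcup\GGG_2$ with each $\GGG_i$ nonempty. Since disjoint connected subsets of $S^2=\R^2\cup\{\infty\}$ can be separated by a circle, choose a simple closed curve $\gamma$ disjoint from $\GGG$ with $\GGG_1$ in one complementary disk and $\GGG_2$ in the other. Each $\GGG_i$ contains an edge, hence two distinct vertices, so each complementary disk of $\gamma$ contains at least two points of $P$; thus $\gamma$ is essential and non-peripheral. The blow-up surgery producing $F_\GGG$ is supported in arbitrarily small neighborhoods of the edges, so near $\gamma$ the map $F_\GGG$ agrees with the homeomorphism (the identity) it was built from; consequently $\gamma$ has a preimage component homotopic to $\gamma$ along which $F_\GGG$ has local degree $1$, i.e.\ $\gamma$ is a Levy fixed curve. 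A Thurston map with hyperbolic orbifold carrying a Levy cycle is not combinatorially equivalent to a rational map (this is part of the content of the reduction in \cite[\S 5.2]{kmp-tan:blow}, of which the present statement is a special case; alternatively, an $F_\GGG$-stable completion of $\{\gamma\}$ has a transition matrix with a diagonal entry $\geq 1$, hence spectral radius $\geq 1$). Hence $F_\GGG$ is not combinatorially equivalent to a rational map.

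\textbf{Main obstacle.} The substantive analytic/topological input is imported, namely \cite[Cor.~3]{kmp-tan:blow}, that no Thurston obstruction can cross a blown-up arc. Granting this, the points that still need care are (i) upgrading ``$\gamma$ meets each edge $e$ with intersection number $0$'' to ``$\gamma$ is disjoint from $\GGG$'', which I would do via simultaneous taut (geodesic) representatives, and (ii) making the Levy-curve argument in the necessity direction precise, for which I would simply invoke \cite[\S 5.2]{kmp-tan:blow}.
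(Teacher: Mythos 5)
Your proposal is correct and follows essentially the same route as the paper, which simply records that no Thurston obstruction can cross a blown-up edge (by \cite[Cor.~3]{kmp-tan:blow}) and then defers to \cite[\S 5.2]{kmp-tan:blow}, of which the statement is a special case. You have merely filled in the details that the citation carries: the hyperbolicity of the orbifold, the observation that the complement of a connected embedded graph containing $P(F_\GGG)$ in its vertex set consists of punctureless disks (so any obstruction would be inessential), and the Levy fixed curve separating the components when $\GGG$ is disconnected.
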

The latter of the above observtions implies Theorem \ref{thm:injective} from the introduction, asserting that distinct graphs produce maps which are distinct, up to equivalence.
\gap

\noindent{\bf Proof of Theorem \ref{thm:injective}.}  Suppose $h_0, h_1: (S^2, P_1) \to (S^2, P_2)$ give a combinatorial equivalence between $F_1=F_{\GGG_1}$ and $F_2=F_{\GGG_2}$.  We claim $h:=h_0$ induces a planar isomorphism $h: \GGG_1 \to \GGG_2$.  Consider an edge $e_1$ of $\GGG_1$ having $k\geq 2$ preimages homotopic to itself under $F_1$. Then its image $h(e_1)$ has $k\geq 2$ preimages homotopic to itself under $F_2$. It follows from \cite[Cor. 4]{kmp-tan:blow} that up to homotopy $h(e_1)$ cannot meet an edge of $\GGG_2$. So either $h(e_1)=e_2$ for some edge $e_2$ of $\GGG_2$ or $h(e_1)$ is an arc in a face $D$ of $\GGG_2$ containing at least four vertices on its boundary. The latter case cannot occur: let $\delta$ be an arc in $D$ which joins two nonincident vertices on $\bdry D$ and which intersects $h(e_1)$. Then $\delta$ is invariant under $F_2$ and meets $h(e_1)$ which has more than one preimage under $F_2$ homotopic to itself, which is impossible again by \cite[Cor. 4]{kmp-tan:blow}. Thus $h$ sends edges of $\GGG_1$ to edges of $\GGG_2$; clearly, if $e$ is blown up $m$ times for $F_1$ then $h(e)$ is blown up $m$ times for $F_2$, and the theorem is proved. 
\qed

Invariant multicurves are natural invariant objects associated to a Thurston map.  The next Lemma shows that for maps $f_\GGG$ as above, there are many invariant multicurves with special properties.  

\begin{lemma}
\label{lemma:getmc}Suppose $\GGG$ is a multigraph and $f=f_\GGG$ is obtained by blowing up each edge $e$ of $\GGG$ some number $m(e)$ times. 
Suppose $e$ is an edge of $\GGG$. Let $\gamma$ be the boundary of a small regular neighborhood of $e$, so that $\gamma$ separates the endpoints $v, w$ of $e$ from all other points of $P(f)$. Then $\Gamma:=\{\gamma\}$ is an invariant multicurve.  The curve $\gamma$  has a single essential nonperipheral preimage homotopic to itself, mapping by degree 
\[ \sum_{\stackrel{e' \neq e,}{\tiny \{v,w\} \intersect e' \neq \emptyset}} (1+m(e')),\]
and all other preimages are inessential.  In particular, $\Gamma:=\{\gamma\}$ is a completely invariant multi curve. 
\end{lemma}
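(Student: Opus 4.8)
The plan is to pull $\gamma$ back directly, combining the description of the preimages of the blown-up arc $e$ from \cite{kmp-tan:blow} with a regular-neighbourhood analysis of $f^{-1}(N)$ and a pigeonhole argument on complementary regions. First I would dispose of the trivial case: we may assume the disk $S^2\setminus N$ contains at least two vertices, since otherwise $\gamma$ is peripheral and there is nothing to prove. Then $\gamma$ is essential and nonperipheral, one complementary disk $N$ of $\gamma$ meeting $P(f)$ in exactly $\{v,w\}$ and the other in the remaining $\#V(\GGG)-2\ge 2$ vertices. Recall that for $f=f_\GGG$ one has $C(f)=P(f)=V(\GGG)$, every vertex being a fixed critical point, and that $e$ has exactly $1+m(e)$ preimage arcs homotopic to itself in $(S^2,P(f))$. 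Fix $N$ a small bigon neighbourhood of $e$, so that $N\cap P(f)=\{v,w\}$ and $f^{-1}(N)$ is a regular neighbourhood of $f^{-1}(e)$.

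Since $f$ fixes every vertex and $C(f)=P(f)$, the only critical points of $f$ lying in $f^{-1}(N)$ are $v$ and $w$. Let $\beta_0,\dots,\beta_{m(e)}$ denote the $1+m(e)$ preimage arcs of $e$ homotopic to $e$; each has its endpoints in $f^{-1}(\{v,w\})\cap P(f)=\{v,w\}$, hence joins $v$ to $w$, and they have pairwise disjoint interiors (being distinct components of $f^{-1}(e^\circ)$, a disjoint union of arcs since $e^\circ=e\setminus\{v,w\}$ contains no critical value). So $v$ and $w$ lie in a common component $W$ of $f^{-1}(N)$; every other component carries no critical point, hence maps by an unbranched cover onto the disk $N$, hence is a disk mapped homeomorphically, disjoint from $P(f)$, bounded by an inessential preimage of $\gamma$. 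Now $W$ is a regular neighbourhood of the component $H$ of $f^{-1}(e)$ through $v,w$; a local-degree count at $v$ and $w$ (as in \cite{kmp-tan:blow}) shows $H$ consists of the ``multi-edge'' $K:=\beta_0\cup\dots\cup\beta_{m(e)}$ joining $v$ to $w$, together with finitely many leaves --- arcs from $v$ or from $w$ to a valence-one regular preimage of $w$ or of $v$. Thus $H$, and hence $W$, deformation retracts onto $K$; in particular $\chi(W)=\chi(K)=1-m(e)$, so $W$ has $1+m(e)$ boundary circles and $1+m(e)$ complementary disks $X_0,\dots,X_{m(e)}$, one lying inside each of the $1+m(e)$ ``lunes'' into which $K$ cuts $S^2$.

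The heart of the matter is that all of $P(f)\setminus\{v,w\}$ lies in a single lune. Since each $\beta_i$ is homotopic to $e$ in $(S^2,P(f))$, the $\beta_i$ are pairwise homotopic rel $\{v,w\}$ in $S^2\setminus(P(f)\setminus\{v,w\})$; equivalently, for $i\ne j$ the simple closed curve $\beta_i\cup\beta_j$ bounds a disk disjoint from $P(f)\setminus\{v,w\}$. If two distinct lunes $L_a,L_b$ each contained a vertex, one could pick $\beta_i$ lying cyclically between $L_a$ and $L_b$ one way around and $\beta_j$ between them the other way; then both disks bounded by $\beta_i\cup\beta_j$ would meet $P(f)\setminus\{v,w\}$, a contradiction. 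Hence exactly one complementary disk $X^*$ of $W$ meets $P(f)\setminus\{v,w\}$, and it contains all $\ge 2$ of those vertices. Consequently $\widetilde\gamma:=\partial X^*$ is an essential nonperipheral preimage of $\gamma$ separating $\{v,w\}$ from $P(f)\setminus\{v,w\}$ exactly as $\gamma$ does, hence homotopic to $\gamma$, while every other preimage of $\gamma$ --- the remaining disks $X_j$, which are disjoint from $P(f)$, together with the disk components of $f^{-1}(N)$ --- is inessential. For the degree: $f|_W\colon W\to N$ is branched only over $v$ and $w$, so Riemann--Hurwitz gives $\deg(f|_W)=\chi(W)+\deg(f,v)+\deg(f,w)-2=\deg(f,v)+\deg(f,w)-1-m(e)$; the other $m(e)$ boundary circles of $W$ bound disks disjoint from $P(f)$ and so are mapped homeomorphically, whence $\widetilde\gamma$ is mapped by degree $\deg(f|_W)-m(e)=\deg(f,v)+\deg(f,w)-1-2m(e)$, which, via $\deg(f,v)=1+\sum_{e'\ni v}m(e')$ and the analogue at $w$, is the degree asserted in the statement. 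Finally, every preimage of $\gamma$ being inessential or homotopic to $\gamma$, $\Gamma=\{\gamma\}$ is an invariant multicurve; and since $\gamma\simeq\widetilde\gamma\subset f^{-1}(\gamma)$, it is completely (fully) invariant.

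I expect the pigeonhole step --- confining $P(f)\setminus\{v,w\}$ to one lune, equivalently showing $\widetilde\gamma$ is the \emph{only} essential preimage of $\gamma$ --- to be the main obstacle, together with its prerequisite of identifying $W$ precisely as a regular neighbourhood of $K$ with leaves attached, so that the complementary disks of $W$ correspond bijectively to the lunes cut out by $K$; both rest on the exact preimage count for blown-up arcs established in \cite{kmp-tan:blow}.
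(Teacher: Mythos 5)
The paper states Lemma~\ref{lemma:getmc} without proof (the text proceeds directly to the next section), so there is no argument of the authors' to compare yours against. Judged on its own, your argument is essentially sound and complete: the identification of the component $W$ of $f^{-1}(N)$ as a regular neighbourhood of the multi-edge $K$ with leaves attached, the Euler-characteristic count giving $1+m(e)$ complementary disks in bijection with the lunes of $K$, the pigeonhole argument confining $P(f)\setminus\{v,w\}$ to a single lune, and the Riemann--Hurwitz computation of the degree are all correct, and they rest only on facts explicitly supplied by \cite{kmp-tan:blow} (the $1+m(e)$ parallel lifts of $e$ and the local degrees at vertices).

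One step needs repair. From ``$\widetilde\gamma$ separates $\{v,w\}$ from $P(f)\setminus\{v,w\}$ exactly as $\gamma$ does'' you conclude ``hence homotopic to $\gamma$.'' Inducing the same partition of the marked points does not determine the homotopy class of a simple closed curve once $\#P(f)\ge 4$: two disks each meeting $P(f)$ exactly in $\{v,w\}$ can have non-homotopic boundaries (they may differ by twists about other curves). The fix is already in your setup: the closed disk $\Delta:=S^2\setminus \mathrm{int}(X^*)$ meets $P(f)$ exactly in $\{v,w\}$ and contains in its interior the arc $\beta_a$ (one of the two sides of the lune $L^*$), with $\{v,w\}\subset\beta_a$; the region between a small regular neighbourhood of $\beta_a$ and $\Delta$ is an annulus free of marked points, so $\widetilde\gamma=\partial\Delta$ is homotopic to the boundary of a regular neighbourhood of $\beta_a$, and since $\beta_a$ is homotopic (hence isotopic rel $P(f)$) to $e$, this is homotopic to $\partial N=\gamma$.

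More importantly, your final sentence asserting that your answer ``is the degree asserted in the statement'' is false, and the discrepancy points at an error in the lemma rather than in your computation. You obtain
\[
\deg\bigl(f|_{\widetilde\gamma}\bigr)=\deg(f,v)+\deg(f,w)-1-2m(e)=1+\sum_{\substack{e'\ne e\\ \{v,w\}\cap e'\ne\emptyset}} m(e'),
\]
whereas the statement places the constant inside the sum; the two differ by $k-1$, where $k$ is the number of edges $e'\ne e$ of $S\GGG$ meeting $\{v,w\}$. Your value is the correct one. For the degree-$4$ Newton map obtained from the path $v_1\!-\!v_2\!-\!v_3\!-\!v_4$ with all multiplicities $1$ and $e=v_2v_3$, the essential preimage of $\gamma$ bounds a disk $X^*$ branched over $v_1,v_4$ with local degrees $2,2$, so it maps by degree $\chi(X^*)+2=3$, matching your formula, while the printed formula gives $(1+1)+(1+1)=4$; for a star with center $v$ and three further leaves besides $w$, the printed formula even returns $6>\deg(f)=5$, which is impossible. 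So your proof establishes the lemma with the degree corrected to $1+\sum_{e'\ne e,\ \{v,w\}\cap e'\ne\emptyset} m(e')$; it would be worth flagging this explicitly rather than claiming agreement with the statement.
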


\newpage
\section{Proof of Theorem \ref{thm:graphtheory}}

We will refer to the underlying simple graph of $\GGG $ by $S\GGG $, where $V(S\GGG ) = V(\GGG )$ and $v, w \in V(S\GGG )$ are connected by exactly one edge if $v, w$ are connected by at least one edge in $\GGG $. We denote the degree of $v \in v(\GGG )$ in $\GGG $ by $\mathrm{deg}_\GGG (v)$, and the degree of $v$ in $S\GGG $ by $\mathrm{deg}_{S\GGG }(v)$. We consider a face of a simple planar graph $S\GGG $ to be a connected region of the plane that is bounded by edges of $S\GGG $. \\

Suppose $d \geq 3$ is an integer. An {\em admissible partition} of the integer $2d-2$ is a partition $k_1+ \ldots + k_n=2d-2$ where $n \leq d$ and $1 \leq k_i \leq d-1$ for each $i=1, \ldots, n$; the order of the terms is irrelevant, and by convention we arrange the terms so $k_1 \geq \ldots \geq k_n$. Any partition may be encoded by its {\em Young diagram}, an array of congruent squares where each row corresponds to a term in the partition, with the $i$th row containing $k_i$ squares.  This point of view will help to organize our arguments. In this language, Theorem \ref{thm:graphtheory} can be stated as follows. 

\begin{theorem}
\label{theorem:ptomg}
Given any Young diagram associated with an admissible partition of $(n, d)$, we may find a corresponding connected planar multigraph $\GGG $ such that each Young diagram row corresponds to a unique vertex of $\GGG $ and the number of boxes in a row is equal to the degree of the associated vertex. 
\end{theorem}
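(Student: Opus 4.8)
The plan is to recast the statement and then argue by induction on $d$. Since $\GGG$ is a multigraph, its planarity is that of its underlying simple graph $S\GGG$, and a connected loopless multigraph on $n$ vertices with valences $m_1 \ge \dots \ge m_n$ exists for purely arithmetic reasons: the valence sum $2d-2$ is even, $d-1 \ge n-1$ supplies enough edges for connectivity, and $m_1 \le d-1$ — equivalently $m_1 \le m_2+\dots+m_n$ — is exactly the inequality that lets one avoid a loop at the top vertex. So the entire content of the theorem is to make $S\GGG$ planar, and I will do this by building $\GGG$ so that at each stage $S\GGG$ is obtained from a tree only by operations that visibly preserve planarity: attaching a pendant edge, subdividing an edge, or adjoining an edge parallel to one already present.

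Reading the partition as a Young diagram with rows of lengths $m_1 \ge \dots \ge m_n$, $\sum m_i = 2d-2$, I argue by strong induction on $d$. When $n = 3$ (in particular in the base case $d = 3$) a triangle $v_1v_2,\,v_2v_3,\,v_1v_3$ with multiplicities $\frac12(m_1+m_2-m_3)$, $\frac12(m_2+m_3-m_1)$, $\frac12(m_1+m_3-m_2)$ works; these are nonnegative integers precisely because the $m_i$ are ordered and $m_1 \le d-1$ (equivalently $m_2+m_3 \ge m_1$), and the resulting multigraph is connected. When $n \ge 4$: if $m_1 = d-1$ then $m_2+\dots+m_n = d-1$, and a ``star with edge multiplicities'' works, namely $S\GGG = K_{1,n-1}$ with center $v_1$ and multiplicity $m_j$ on the edge $v_1v_j$ — this is the graph dual to Tischler's bipartite tree. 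Otherwise $m_1 \le d-2$, and I reduce according to the shortest row:
\begin{itemize}
\item $m_n = 1$: delete that row, decrement $m_1$ by $1$; the result is admissible for $(n-1, d-1)$, realized by some $\GGG'$ by induction, and I re-attach a pendant edge at the vertex of valence $m_1-1$;
\item $m_n = 2$ and no row has length $1$: delete that row; the result is admissible for $(n-1,d-1)$, realized by $\GGG'$, and I subdivide an edge of $\GGG'$ by a new vertex;
\item every $m_i \ge 3$ (which forces $d \ge n+1$): decrement $m_1$ and $m_2$ each by $1$; the result is admissible for $(n, d-1)$, realized by $\GGG'$, and I restore one edge between the two vertices whose valences were decremented.
\end{itemize}
In each case the admissibility of the reduced diagram (the inequalities $m_i\le d-1$, $n\le d$, and Riemann--Hurwitz) is a routine check, and each restore-operation keeps the multigraph connected and planar.

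The one place this needs care is the last restore: adjoining an edge between the two valence-decremented vertices preserves planarity only if that edge is \emph{parallel} to one of $\GGG'$, i.e.\ only if those two vertices are already adjacent in $\GGG'$. To secure this I would strengthen the inductive hypothesis to produce, for a prescribed shorter row $b$, a realization in which some vertex of maximal valence is joined by an edge to a vertex of valence $m_b$, and then carry out the pendant and subdivision steps so as to create precisely such an adjacency. The hard part — and where I expect nearly all of the bookkeeping to lie — is propagating this strengthened hypothesis through \emph{ties}: when several rows have the maximal length, decrementing two of them leaves the remaining ones maximal, so the two decremented vertices are no longer of maximal valence and the hypothesis does not apply to them directly. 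One resolves this by re-choosing which rows to decrement, exploiting the symmetry among equal rows, and by treating the ``fully regular'' diagrams $m_1 = \dots = m_n = k$ separately — these are realized directly by the cycle $C_n$ with parallel edges distributed as evenly as possible, which the parity relation $nk = 2d-2$ always permits. Verifying that these regular diagrams together with the re-choosing manoeuvres exhaust all tie configurations is, I expect, the crux of the argument.
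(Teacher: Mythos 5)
Your proposal is a plan with an acknowledged hole at precisely the point where the real work lies, so it does not yet constitute a proof. The arithmetic of your reductions is fine (the triangle for $n=3$, the star for $m_1=d-1$, the pendant-edge and subdivision steps, and the admissibility checks), but the third reduction --- decrement $m_1$ and $m_2$, realize the smaller diagram, then ``restore one edge between the two vertices whose valences were decremented'' --- is exactly the step that can destroy planarity: two vertices of a planar multigraph need not lie on a common face, and you cannot in general join them by a new edge. You see this yourself, and your proposed repair (a strengthened inductive hypothesis guaranteeing an adjacency between a maximal-valence vertex and a prescribed other vertex) is left unverified; you explicitly flag the propagation of this hypothesis through ties as ``the crux of the argument'' and do not carry it out. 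Until that invariant is actually established and pushed through all three reduction cases --- including the interaction with re-sorting after decrementing, and the degenerate configurations you defer to a separate ``fully regular'' analysis --- the induction does not close.

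For comparison, the paper avoids the adjacency problem entirely rather than maintaining it as an invariant. It inducts upward (from $(n,d)$ to $(n,d+1)$, with the diagonal $n=d$ as a separately handled base case) and proves a degree-transfer lemma: if $v_1,v_2$ are adjacent and $\deg_{S\GGG}(v_1)\geq 2$, one can move a unit of valence from $v_1$ to $v_2$ by deleting an edge $v_1w$ and adding $wv_2$ inside a common face (the ``buffer vertex'' move). To raise the valences of two \emph{arbitrary} vertices $s,t$ by one each, it adds an edge parallel to the first edge of a shortest $s$--$t$ path (raising $s$ and $a_1$), then repeatedly applies the lemma to walk the unwanted increment at $a_1$ along the path until it reaches $t$; the shortest-path hypothesis guarantees the buffer vertex exists at each stage and the path length strictly decreases. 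This local-migration idea is the missing ingredient in your argument: with it, no adjacency invariant needs to be carried through the induction, and the case analysis on ties that you identify as the crux simply never arises. If you want to salvage your top-down reduction, the cleanest fix is to replace your third restore-step with this same migration trick (add a parallel edge at the decremented vertex of larger valence toward its shortest-path neighbour, then shift the excess along the path to the other decremented vertex), rather than trying to prove the two target vertices can always be made adjacent.
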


Note that the restriction $n \leq d$ is necessary. For instance, if $d = 3$, $n = 4$, then we have $2\cdot(3 - 2) = 4$ boxes and 4 rows, so each row must have exactly one square. But clearly there is no way to obtain a connected planar multigraph with 4 vertices each of degree 1. \\

\begin{proof}
By induction on $d$. \\

Note that, given a Young diagram associated with an admissible partition of $(2, d)$, we may find a corresponding connected planar multigraph $\GGG $ that consists of vertices $v_1$ and $v_2$ connected by $d-1$ edges. For the remainder of the proof, we assume $n \geq 3$. \\ 

\noindent \underline{Base Case}:

Since $n, d \geq 3$ and $n \leq d$, we first must show that this holds for $n = d$. We do so by induction on $n$. 

If $n = d = 3$, there is only one possible Young diagram, as below. We see that the only possible corresponding multigraph is a path of length $2$, which we note is connected and planar. \\ \\

Assume that we may find a connected planar multigraph $\GGG $ for any Young diagram associated with an admissible partition of $(k-1, k-1)$. Since $|V(\GGG )| > 2$ and $\GGG $ is connected, we may find $v_1$ and $v_2 \in V(\GGG )$ two adjacent vertices connected by edge $e \in E(\GGG )$. 

We now must show that we may find a connected planar multigraph $\GGG '$ for any Young diagram associated with an admissible partition of $(k, k)$. We note that we may construct any of the Young diagrams associated with an admissible partition of $(k, k)$ (with $2k-2$ boxes) from one of the Young diagrams associated with an admissible partition of $(k-1, k-1)$ (with $2k - 4$ boxes) by adding one row and two new boxes. Let the vertex corresponding to the added row be $v' \in V(\GGG ')$. We consider two cases. \\ 

\noindent \underline{Case I}: 
We add one new row with two boxes. 

In this case, we may construct $\GGG '$ from $\GGG $ by deleting $e \in E(\GGG )$ and adding $e_1, e_2 \in E(\GGG ')$ such that $e_1$ connects $v_1$ and $v'$, and $e_2$ connects $v_2$ and $v'$. Note that the degree of $v_1$ and $v_2$ has not changed, and $v'$ has degree $2$ in $\GGG '$. Furthermore, since $\GGG $ was connected and planar by our inductive assumption, $\GGG '$ is clearly connected and planar. \\ 

\noindent \underline{Case II}: 
We add one new row consisting of one box and also add one other box to some existing row (say the row corresponding to $v_1$). 

In this case, we may construct $\GGG '$ from $\GGG $ by adding $v'$ as a leaf of $v_1$. Note that we have increased the degree of $v_1$ by $1$, and $v'$ has degree $1$ in $\GGG '$. Again, since $\GGG $ was connected, $\GGG '$ is clearly connected. Furthermore, since we may add a leaf to any vertex of a planar multigraph and the resulting multigraph will also be planar, $\GGG $ planar implies that $\GGG '$ is planar. \\

Thus, given $n = d$, we may find a connected planar multigraph for any Young diagram associated with an admissible partition of $(n,d)$. \hfill $\square$ \\

Now, consider $d > n$. For the purposes of induction on $d$, we assume that we may find a connected planar multigraph $\GGG $ for any Young diagram associated with an admissible partition of $(n, d)$. We now must show that we may find such a connected planar multigraph $\GGG '$ for any Young diagram associated with an admissible partition of $(n, d+1)$. We note that we may construct all of the Young diagrams associated with an admissible partition of $(n, d+1)$ from the Young diagrams associated with an admissible partition of $(n, d)$ by adding two new boxes. We proceed by two cases. \\ 

\noindent \underline{Case I}: We add one new box to each of two distinct rows of a Young diagram associated with an admissible partition of $(n, d)$. Recall that, by assumption, $n \geq 3$. We note that this corresponds to increasing the degree of two vertices of $\GGG $ by 1. 

\begin{lemma}
Let $\GGG $ be a connected planar multigraph with $|V(\GGG )| \geq 3$. Given $v_1, v_2 \in V(\GGG )$ adjacent and $\mathrm{deg}_{S\GGG }(v_1) \geq 2$, we may find a connected planar multigraph $\GGG '$ with $V(\GGG ) = V(\GGG ')$ such that 

1. $\mathrm{deg}_{\GGG '}(v) = \mathrm{deg}_\GGG (v)$ for all $v \in V(\GGG )$, $v \neq v_1, v_2$ 

2. $\mathrm{deg}_{\GGG '}(v_1) = -1 + \mathrm{deg}_\GGG (v_1)$

3. $\mathrm{deg}_{\GGG '}(v_2) = 1 + \mathrm{deg}_\GGG (v_2)$. 
\end{lemma}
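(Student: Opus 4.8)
The plan is to obtain $\GGG'$ from $\GGG$ by a single local edge rerouting at $v_1$: delete one carefully chosen edge incident to $v_1$ and replace it by one incident to $v_2$. Since $\mathrm{deg}_{S\GGG}(v_1)\geq 2$, the vertex $v_1$ has at least two distinct neighbours in $\GGG$, one being $v_2$. First I would exploit the rotation system of the planar embedding at $v_1$: listing the edges incident to $v_1$ in cyclic order, the edges running to $v_2$ form a nonempty proper subset of this cyclic list (nonempty since $v_1,v_2$ are adjacent, proper since $v_1$ has another neighbour), so some edge $e_{12}$ from $v_1$ to $v_2$ is immediately followed, in the cyclic order at $v_1$, by an edge $e_{1w}$ from $v_1$ to a vertex $w\neq v_2$; note $w\neq v_1$ since $\GGG$ has no loops. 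The corner of the embedding between $e_{12}$ and $e_{1w}$ at $v_1$ lies on a single face $F$, and since $F$ is then incident to both $e_{12}$ and $e_{1w}$, both $v_2$ and $w$ lie on the boundary walk of $F$.

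Next I would set $\GGG':=(\GGG\setminus\{e_{1w}\})\cup\{e'\}$, where $e'$ is a new edge joining $v_2$ to $w$. To see that $\GGG'$ is planar, first draw $e'$ as a simple arc from $v_2$ to $w$ through the open face $F$ of $\GGG$ --- this is possible because $\GGG$ is connected, so on the sphere $F$ is an open disk whose boundary walk meets both $v_2$ and $w$ --- and then delete $e_{1w}$; deleting an edge cannot create a crossing, so the resulting embedding of $\GGG'$ is a valid planar embedding. As $e'$ joins the distinct vertices $v_2$ and $w$, no loop is created, and $V(\GGG')=V(\GGG)$.

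Finally I would check the bookkeeping. For degrees: $v_1$ loses exactly $e_{1w}$, so $\mathrm{deg}_{\GGG'}(v_1)=\mathrm{deg}_\GGG(v_1)-1$; $v_2$ gains exactly $e'$, so $\mathrm{deg}_{\GGG'}(v_2)=\mathrm{deg}_\GGG(v_2)+1$; $w$ loses $e_{1w}$ and gains $e'$ for no net change, and every other vertex is untouched, so $\mathrm{deg}_{\GGG'}(v)=\mathrm{deg}_\GGG(v)$ for all $v\neq v_1,v_2$. In particular $\mathrm{deg}_{\GGG'}(v_1)\geq 2-1=1$, so $\GGG'$ has no isolated vertices. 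For connectivity: either $e_{1w}$ is not a bridge of $\GGG$, so $\GGG\setminus\{e_{1w}\}$, hence $\GGG'$, is already connected; or $e_{1w}$ is a bridge, so $\GGG\setminus\{e_{1w}\}$ has exactly two components, one containing $v_1$ --- and therefore also $v_2$, via the surviving edge $e_{12}$ --- and one containing $w$, and the new edge $e'$ joins them. Either way $\GGG'$ is connected. The only step that is not completely routine is embedding $e'$ without violating planarity, and that is precisely what the choice of a rotation-adjacent pair $e_{12},e_{1w}$ at $v_1$ is engineered to make automatic; I expect no serious obstacle beyond it.
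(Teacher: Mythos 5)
Your proposal is correct and is essentially the paper's own argument: you locate a ``buffer'' vertex $w$ adjacent to $v_1$ that shares a face with $v_2$, delete an edge $v_1w$, and add an edge $wv_2$ inside that face. Your use of the rotation system at $v_1$ to produce $w$, and your explicit bridge analysis for connectivity, merely make precise steps the paper asserts more briefly.
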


\begin{proof}
$\GGG $ is a planar multigraph with $|V(\GGG )| \geq 3$, so since $S\GGG $ contains no multiple edges or loops, the boundary of each face of $S\GGG $ contains at least three distinct vertices. Since $v_1$ and $v_2$ are adjacent, they lie on the boundary of some face $F$ of $S\GGG $. Furthermore, since $\mathrm{deg}_{S\GGG }(v_1) \geq 2$, there exists $w \in V(\GGG )$ adjacent to $v_1$, $w \neq v_2$, such that $w$ also lies on $\partial F$. We refer to $w$ as a {\em buffer vertex} of $v_1, v_2$.

Let $\GGG '$ be the graph obtained from $\GGG $ by deleting edge $v_1 w$ and adding edge $w v_2$. By construction, Conditions 1 - 3 hold. Since $w$ and $v_2$ are both on $\partial F$, we may add edge $w v_2$ without crossing any other edges, so $\GGG '$ is planar. Since $\GGG $ is connected, $\GGG '$ is connected by construction. 
\end{proof}

\noindent The proof of Case I then follows from the following proposition: 

\begin{proposition}
Let $\GGG $ be a connected planar multigraph with $|V(\GGG )| \geq 3$. Given arbitrary $s, t \in V(\GGG )$, we may find a connected planar multigraph $\GGG '$ with $V(\GGG ) = V(\GGG ')$ such that 

1. $\mathrm{deg}_{\GGG '}(v)  = \mathrm{deg}_\GGG (v)$ for all $v \in V(\GGG )$, $v \neq s, t$ 

2. $\mathrm{deg}_{\GGG '}(s) = 1 + \mathrm{deg}_\GGG (s)$

3. $\mathrm{deg}_{\GGG '}(t) = 1 + \mathrm{deg}_\GGG (t)$. 
\end{proposition}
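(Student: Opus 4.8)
The plan is to reduce the statement to the preceding Lemma, which transports one unit of degree across a single edge between two adjacent vertices at the cost of an auxiliary ``buffer'' vertex, and then to iterate it along a path joining $s$ to $t$. First observe that necessarily $s \neq t$: adding any edge to the loopless multigraph $\GGG$ raises the degrees of two \emph{distinct} vertices by one, so conditions 2 and 3 cannot both concern the same vertex. Since $\GGG$ is connected and $s\neq t$, fix a shortest path $s = u_0, u_1, \ldots, u_k = t$ in $S\GGG$, necessarily simple, with $k \geq 1$.

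I would seed the construction at the $s$-end. The vertices $u_0, u_1$ are joined by an edge of $S\GGG$, hence lie on the boundary of a common face of $S\GGG$; drawing a new edge parallel to an existing $u_0 u_1$-edge inside that face produces a connected planar multigraph $\GGG_0$ with $\mathrm{deg}_{\GGG_0}(u_0) = \mathrm{deg}_\GGG(u_0) + 1$, $\mathrm{deg}_{\GGG_0}(u_1) = \mathrm{deg}_\GGG(u_1) + 1$, and all other degrees unchanged. Then, for $i = 1, 2, \ldots, k-1$ in turn, I would apply the Lemma to the multigraph obtained so far with $(v_1, v_2) = (u_i, u_{i+1})$: each application restores $\mathrm{deg}(u_i)$ to its original $\GGG$-value, raises $\mathrm{deg}(u_{i+1})$ by one, fixes every other degree, and keeps the multigraph connected and planar. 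After the step $i = k-1$ (the loop being vacuous when $k=1$), the resulting $\GGG'$ has exactly the degrees of $s = u_0$ and $t = u_k$ increased by one, is connected and planar, and so is as required.

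The only real content is verifying that the hypotheses of the Lemma genuinely hold at each step: that $|V| \geq 3$ (clear, the vertex set is fixed throughout), that $u_i$ and $u_{i+1}$ are still adjacent, and that $\mathrm{deg}_{S(\cdot)}(u_i) \geq 2$ in the current simple graph. For this I would note that every edge deleted by an earlier application of the Lemma is of the form $u_j w$ with $j < i$ and $w \neq u_{j+1}$; since the path is simple, such an edge is never one of the path-edges $u_{i-1}u_i$ or $u_i u_{i+1}$, so at step $i$ the vertex $u_i$ retains its two distinct neighbours $u_{i-1}$ and $u_{i+1}$, giving both required adjacency facts. One small bookkeeping remark deserves a sentence: a buffer vertex chosen by an earlier step may happen to coincide with $s$ or with some $u_j$, so that this vertex gets an edge deleted and another added, but these changes always cancel (a $-1$ followed by a $+1$), so the running degree of every vertex other than the two currently ``active'' endpoints is exactly what is intended. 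The main obstacle, and the reason the construction is not simply ``add the edge $st$'', is planarity: the buffer-vertex device of the Lemma is precisely what makes the stepwise transport along the path compatible with planarity, and the bulk of the remaining work is checking it does not spoil what has already been arranged.
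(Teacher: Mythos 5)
Your proof is correct and follows essentially the same route as the paper's: add one edge at the $s$-end of a shortest $s$--$t$ path and then repeatedly invoke the buffer-vertex Lemma to transport the surplus degree along the path to $t$. The only difference is bookkeeping --- the paper recomputes a minimal path after each application of the Lemma and terminates by a strictly decreasing length argument, whereas you fix the path once and verify the Lemma's hypotheses persist at each step; both versions are sound.
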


\begin{proof}
If $s$ and $t$ are adjacent in $\GGG $, we may construct $\GGG '$ by simply adding another edge between them. Assume $s, t$ not adjacent in $\GGG $. 

Since $\GGG $ is connected, we may find $P_1 = s, a_1, a_2, \ldots, a_m, t$ a path of minimal length between $s$ and $t$. Add edge $s a_1$, increasing the degrees of $s$ and $a_1$ each by 1. Call the resulting graph $\HHH$, and note that $\HHH$ is a connected planar multigraph with $V(\GGG ) = V(\HHH)$. 

Since $a_1$ is adjacent to $s$ and $a_2$, $\mathrm{deg}_{S\HHH}(a_1) \geq 2$. Furthermore, since $P_1$ is a path of minimal length, $a_1$ is not adjacent to any other vertices in $P_1$. Therefore, there exists $w \in V(H)$ adjacent to $a_1$ such that $w \not\in V(P_1) - \{s\}$ and $w, a_1, a_2$ lie on the boundary of some face $F$ of $SH$. We apply Lemma 1 with $w$ as our buffer vertex of $a_1, a_2$ to obtain connected planar multigraph $\HHH'$ with $V(\HHH) = V(\HHH')$ such that 

1. $\mathrm{deg}_{\HHH'}(v)  = \mathrm{deg}_\HHH(v)$ for all $v \in V(\HHH)$, $v \neq a_1, a_2$ 

2. $\mathrm{deg}_{\HHH'}(a_1) = -1 + \mathrm{deg}_{\HHH}(a_1)$

3. $\mathrm{deg}_{\HHH'}(a_2) = 1 + \mathrm{deg}_{\HHH}(a_2)$. 

\noindent Note that we now have $\mathrm{deg}_{\HHH'}(a_1) = \mathrm{deg}_{\GGG }(a_1)$. So, if $a_2 = t$, we are done. Otherwise, since $\HHH'$ is connected, we may find $P_2 = a_2, b_1, \ldots, b_n, t$ a path of minimal length from $a_2$ to $t$. Note that since the path $a_1, a_2, a_3, \ldots, a_m, t \in \HHH'$, the length of $P_2$ is strictly less than the length of $P_1$. We redefine $a_1 := a_2$, $a_2 := b_1$ and repeat the argument. Since the length of our minimal path strictly decreases with each iteration, we will eventually obtain $a_2 = t$, thus proving our claim. 
\end{proof}

\noindent \underline{Case II}: We add two new boxes to one row of a Young diagram associated with an admissible partition of $(n, d)$. Following the given constraints, this case only pertains to rows of size $\leq d -2$. 

Suppose that our row corresponds to $v_1 \in V(\GGG )$. Since $n, d \geq 3$ and $\GGG $ is planar, we may find $v_2, v_3 \in V(\GGG )$ such that $v_2$ and $v_1$ are connected by edge $e_{12}$, $v_2$ and $v_3$ are connected by edge $e_{23}$, and $v_1, v_2, v_3$ lie along the boundary of a face $F$ of $S\GGG $. Now, to increase the degree of $v_1$ by 2 without changing the degrees of any of the other vertices of $\GGG $, we may simply delete $e_{23}$ and add an edge from $v_3$ to $v_1$ and another edge from $v_2$ to $v_1$ to form $\GGG '$. Note that since $\GGG $ is connected, $\GGG '$ is connected. Furthermore, as previously argued, we may connect $v_1$ and $v_3$ without violating planarity, since they are both on $\partial F$. \\

Thus, if we can find a connected planar multigraph $\GGG $ for any Young diagram associated with an admissible partition of $(n, d)$, we may find a connected planar multigraph for any Young Diagram associated with an admissible partition of $(n, d+1)$, so our induction is complete, and Theorem \ref{thm:graphtheory} is proved. 
\end{proof}

The same ideas in fact show that if the assumption $n \leq d$ is dropped, one may still find that the branch data arises from the blowing up construction of a now not necessarily connected multigraph.  

\begin{theorem}
\label{thm:genrealizable}
Suppose $d \geq 2$ and branch data given by the partition  $k_1+\ldots + k_n=2d-2$, where $1 \leq k_i \leq d$ for $i=1, \ldots, n$, is given.  Then there exists a planar multigraph $\GGG$ such that blowing up the edges of $\GGG$ yields a critically fixed Thurston map, and hence a branched covering, realizing  this branch data.
\end{theorem}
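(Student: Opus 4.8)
The plan is to reduce Theorem \ref{thm:genrealizable} to Theorem \ref{theorem:ptomg} by handling separately the ``excess'' part of a partition that cannot be fit into a connected multigraph on $\leq d$ vertices.  First I would dispose of trivial cases: if $n \leq d$ we are already done by Theorem \ref{theorem:ptomg} (and the blowing-up construction of \S\ref{subsecn:blow} then produces the desired critically fixed Thurston map, hence branched covering, with the prescribed branch data).  So assume $n > d$.  Since $\sum_{i=1}^n k_i = 2d-2$ and each $k_i \geq 1$, having many parts forces most of them to be small; in fact $n > d$ together with $k_i \leq d$ gives little room, and the guiding idea is that the partition decomposes as a disjoint union of ``sub-partitions'', each of which satisfies the hypotheses of Theorem \ref{theorem:ptomg} for a smaller degree, and which we realize by separate connected components.

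The key combinatorial step is the following claim: any partition $k_1 + \ldots + k_n = 2d-2$ with $1 \leq k_i \leq d$ can be split as a disjoint union of blocks $B_1, \ldots, B_r$ where, for block $B_j$ consisting of parts summing to $2d_j - 2$ with $d_j \geq 2$, one has $|B_j| \leq d_j$ and each part in $B_j$ is $\leq d_j - 1$; here $\sum_j (2d_j - 2) = 2d - 2$.  Given such a decomposition, Theorem \ref{theorem:ptomg} produces a connected planar multigraph $\GGG_j$ realizing block $B_j$ as its valence sequence, and the disjoint union $\GGG := \GGG_1 \sqcup \ldots \sqcup \GGG_r$, embedded in the plane in disjoint disks, is a planar multigraph with the prescribed valence multiset $\{k_1, \ldots, k_n\}$; blowing up its edges (allowing the disconnected case, exactly as in \S\ref{subsecn:blow}) yields a critically fixed Thurston map realizing the branch data $k_1 + \ldots + k_n = 2d-2$, and forgetting dynamics gives the desired branched covering.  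To prove the splitting claim I would argue greedily: repeatedly peel off a block.  If there is an index with $k_i = d$, that part alone must be its own block (take $d_j = d$ is impossible since we'd need $k_i \leq d_j - 1$; rather pair it with a single part $k = d-2$... ) — more carefully, a part equal to $d$ violates $k_i \leq d_j - 1$ unless $d_j \geq d+1$, which is impossible within a block summing to at most $2d-2$; so instead one notes any part of size $d$ must be accompanied by enough other parts, and it is cleanest to first reduce to the case $k_i \leq d-1$ for all $i$ by observing that a part of size exactly $d$ can only occur when $d = 2$ (a single edge, $n=1 \leq d$) or in a partition where $2d - 2 \geq d$, i.e. $d \geq 2$, in which case a part of size $d$ leaves $d-2$ among the remaining parts, forcing at most $d-2$ further parts, so $n \leq d-1 < d$ — contradicting $n > d$.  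Hence in the genuinely new regime $n > d$ we automatically have $k_i \leq d - 1$ for all $i$, and the greedy peeling only needs to produce blocks with $d_j \leq$ the total and the part-count bound, which follows from a straightforward counting induction on $d$.

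I expect the main obstacle to be the bookkeeping in the greedy decomposition: one must choose, at each step, how many small parts to bundle into a block of a given sub-degree $d_j$ so that simultaneously $|B_j| \leq d_j$, all parts are $\leq d_j - 1$, and the sub-degrees add up correctly to $d$ (equivalently the block sums add to $2d-2$).  The cleanest route is probably an explicit construction rather than abstract induction: sort the parts in decreasing order $k_1 \geq \ldots \geq k_n$, and form the first block by taking the largest part $k_1$ (of size $s \leq d-1$) together with parts accumulated until their sum reaches $2(s+1) - 2 = 2s$ — i.e.\ until the running sum is $2s$, which is achievable because all subsequent parts are $\leq s \leq$ the current deficit as long as the deficit is positive; this yields a block of sub-degree $d_1 = s+1$ with all parts $\leq s = d_1 - 1$ and part-count at most $d_1$ by a pigeonhole count (the $d_1$ parts of a block summing to $2d_1 - 2$ with each $\geq 1$ force at most $2d_1 - 2 - (|B_1| - 1) \geq 1$, giving $|B_1| \leq 2d_1 - 2$, and the sharper $|B_1|\le d_1$ comes from the largest part being $\ge \lceil (2d_1-2)/|B_1|\rceil$).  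Once the first block is removed, the remaining parts form a partition of $2(d - d_1) - 2 \cdot 0$... — here one must be slightly careful that $2d - 2 - (2d_1 - 2) = 2(d - d_1)$ is again of the form $2d' - 2$ only if we absorb a $-2$, which is handled by a small adjustment (the very last block may need the sum to be exactly $2d_r - 2$, which is automatic since all block sums were forced to be even of the form $2d_j - 2$ by construction).  After this lemma the dynamical conclusion is immediate from \S\ref{subsecn:blow}, so the whole weight of the theorem rests on this elementary but fiddly partition-splitting argument.
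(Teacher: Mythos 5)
Your overall strategy --- split the partition into blocks each satisfying the hypotheses of Theorem \ref{theorem:ptomg} for a smaller sub-degree $d_j$, realize each block by a connected planar multigraph, and blow up a disjoint union --- is a genuinely different route from the paper's, which simply reruns the induction on $d$ from Theorem \ref{theorem:ptomg} without the connectivity requirement (the one new case, two new singleton rows, is handled by adjoining a disjoint segment; the other two cases are as before). Your reduction can be made to work, but the splitting lemma, on which you rightly say the whole weight of the theorem rests, is not proved. The greedy rule ``sort decreasingly and accumulate after $k_1=s$ until the running sum reaches $2s$'' can overshoot, and your justification (``all subsequent parts are $\leq s \leq$ the current deficit as long as the deficit is positive'') is false: the deficit shrinks as parts are added and can drop below the next part in decreasing order. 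Concretely, for $d=7$ and $3+2+2+1+1+1+1+1=12$ (so $n=8>d$), the first block's target is $6$, and $3+2=5$ is followed by another $2$, overshooting to $7$. The lemma survives --- take $\{3,2,1\}$ --- but a correct argument must exploit the supply of $1$'s, which is large precisely when $n>d$ (since $\sum_i(k_i-1)=2d-2-n<d-2$, almost all parts equal $1$); e.g.\ put \emph{all} parts $\geq 2$ into one block together with a number $m$ of $1$'s chosen with the right parity, and pair the leftover $1$'s into segments. Also, your stated reason for $|B_1|\le d_1$ (via $\lceil(2d_1-2)/|B_1|\rceil$) does not yield that bound; the correct observation is that a block summing to $2d_1-2$ and containing a part equal to $d_1-1$ has at most $1+(d_1-1)=d_1$ parts.

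Two further points. Your base case ``$n\le d$ is done by Theorem \ref{theorem:ptomg}'' tacitly requires every $k_i\le d-1$, since that is part of the definition of an admissible partition; your discussion of parts equal to $d$ only excludes them when $n>d$, so the case $n\le d$ with some $k_i=d$ is left dangling. (A vertex of valence $d$ forces at least $d$ edges and hence a blown-up map of degree at least $d+1$, so that case is unrealizable and the hypothesis must be read as $k_i\le d-1$, consistent with Theorem \ref{thm:portrait_characterization}; you should say this explicitly rather than fall back on a theorem that does not apply.) Finally, the ``absorb a $-2$'' worry in your recursion is not a real one --- after removing a block of sum $2d_1-2$ the remainder sums to $2(d-d_1+1)-2$, which is already of the required form --- but you do need to verify that the largest remaining part is at most $(d-d_1+1)-1$ before recursing, which is another piece of bookkeeping the paper's two-boxes-at-a-time induction avoids entirely.
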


\pf If $d=2$ we blow up a single edge.  We now induct on the integer $d$ and adapt the setup of the proof of Theorem \ref{theorem:ptomg}. At the inductive step, we have a Young diagram whose corresponding partition is given by a planar multigraph $\GGG$.  As before, the new Young diagram falls into one of three cases. 

If two new rows, each with one box, are added, we add to $\GGG$ a disjoint copy of a segment (two vertices joined by a single edge). 

If a box is added to two distinct existing rows, we join the corresponding vertices of $\GGG$ by an edge. 

If one new row with two boxes is added, we do the following. The new row will correspond to a new vertex $v$.  Add $v$ to the interior of a face $F$ of $\GGG$. There are at least two vertices $a, b \in \bdry F$ which are joined by an edge $e$ of $\GGG$. We delete $e$ and add two new edges, one joining $v$ and $a$, and another joining $v$ and $b$.  The valence at $a, b$ is preserved, and the valence of $v$ is equal to two as desired. 
\qed

\section{Tischler graphs} 
\label{secn:tischler}

Suppose $f$ is a critically fixed rational map. Suppose $c \in \C(f)$ has multiplicity $m$ and let $\Omega_c$ be its immediate basin.  Up to precomposition by multiplication by $m$th roots of unity, there is a unique holomorphic isomorphism $\Phi: (\D, 0) \to (\Omega_c, c)$ conjugating $w \mapsto w^{m+1}$ on $\D$ with $f$ on $\Omega_c$ which extends continuously to a semiconjugacy on the closures.  The image $\Phi(r\exp(2\pi i t)), r \in [0,1]$ is the {\em internal ray of angle} $t \in \R/\Z$ in $\Omega_c$; when $c=\infty$ these are called {\em external rays}. Note that the ray of angle $t$ is fixed if and only if $t = j/m \bmod 1$ for some $j\in\{0, 1, \ldots, m-1\}$. 

The {\em Tischler graph} is a planar embedded graph; as a subset of $\P^1$ it is the union of the fixed internal rays as in the previous paragraph.

\begin{lemma}
\label{lemma:tischlergraph}
Let $\TTT=(V, E)$ be the Tischler graph of a critically fixed rational map $f$ of degree $d$. Then $f(\TTT)=\TTT$, $f|_\TTT \simeq \id_\TTT$ relative to $C\sqcup R$, and the following hold: 
\begin{enumerate}
\item $\TTT$ is bipartite, with $V=C \sqcup R$ and edges joining elements of $C$ to elements of $R$
\item $|E| = 2d-2$
\item $|C|+|R| \leq d+1$
\item $2 \leq |C|$
\item $|C| \leq d$ 
\item for any 2-cycle in $\TTT$, each of the complementary disks meets a point in $C$. 
\end{enumerate}
\end{lemma}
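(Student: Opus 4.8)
The plan is to build the whole statement around the dynamics of $f$ on its fixed internal rays and the B\"ottcher-type coordinates $\Phi_c$ at each critical point $c$. First I would establish the basic structure: for each critical point $c$ of multiplicity $m$, the coordinate $\Phi_c$ conjugates $w\mapsto w^{m+1}$ to $f$ on the immediate basin $\Omega_c$, so the rays of angle $j/m$, $j=0,\dots,m-1$, are exactly the fixed internal rays emanating from $c$, giving $m$ of them. Each such ray, being forward invariant and contained in the Fatou set except for its landing point, must land at a fixed point of $f$ on $\partial\Omega_c$; since $f$ is postcritically finite, every periodic point on a Julia-set-type boundary is repelling (or the landing point is another critical point), and in either case it is a fixed point of $f$. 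Collecting landing points into a set $R$ and the critical points into $C$, one gets a graph $\TTT$ with $f(\TTT)=\TTT$ and $f|_\TTT$ isotopic to the identity rel $C\sqcup R$, since each closed ray is fixed pointwise up to homotopy. This simultaneously yields (1): edges run from a point of $C$ (the source end of a ray) to a point of $R$ (its landing point), and no two elements of $C$ can be joined directly because an internal ray of $\Omega_c$ cannot land at another critical point's center without passing through that basin — so the bipartite structure with parts $C,R$ follows; points that are simultaneously a critical point and a landing point of some other ray are handled by noting the incidence is still $C$-to-$R$ per edge.

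Part (2) is a counting statement: $|E| = \sum_{c\in C} m(c) = \sum_{c}(\deg(f,c)-1) = 2d-2$ by Riemann--Hurwitz, since the edges incident to $c$ are precisely its $m(c)$ fixed rays. Part (4) is immediate: a degree $d\ge 2$ rational map has at least two critical points (counted without multiplicity it could be one, but a single critical point would force $\deg(f,c)=2d-2+1$, impossible for $d\ge 3$; for $d=2$ there are two simple critical points), so $|C|\ge 2$ — I would phrase this carefully using the admissibility bound $m_i\le d-1$ which forces $n\ge 2$. Parts (3) and (5) come from the fixed-point count: $\Fix(f)$ has $d+1$ elements counted with multiplicity, and for a postcritically finite map all fixed points are simple; the elements of $C$ are superattracting fixed points and the elements of $R$ are repelling (or are themselves in $C$), so $C$ and $R$ are disjoint subsets of $\Fix(f)$, giving $|C|+|R|\le d+1$. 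Then (5), $|C|\le d$, follows either from (3) together with $|R|\ge 1$ (each critical point, having $m\ge 1$ rays, supplies at least one landing point, and at least one such landing point is not itself a critical point — this needs a small argument), or more robustly from the Holomorphic Fixed-Point Formula: if all $d+1$ fixed points were critical (superattracting, multiplier $0$), then $\sum 1/(1-f'(z)) = d+1 \neq 1$, contradiction, so $|C|\le d$.

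For part (6), the main obstacle, suppose $\gamma$ is a $2$-cycle in $\TTT$: two vertices $c\in C$, $r\in R$ joined by two distinct edges, i.e.\ two distinct fixed internal rays of $\Omega_c$ both landing at $r$. The union $\gamma = r_{t_1}\cup r_{t_2}$ is a Jordan curve (a closed leaf pair through $c$ and $r$), separating $\P^1$ into two disks $D_1, D_2$. I would argue each $D_i$ must contain a point of $C$ by a Riemann--Hurwitz / degree-counting argument on $\gamma$: since $f(\gamma)=\gamma$ with $f|_\gamma$ a homeomorphism isotopic to the identity, the curve $\gamma$ is a fixed curve, and by the standard analysis of how $f$ maps the complementary disks (cf.\ the treatment of fixed curves and the Holomorphic Fixed-Point Formula, or the Lefschetz-type index argument for the two fixed points $c,r$ on $\gamma$), a complementary disk $D_i$ containing no critical point would be mapped by $f$ properly to itself or to the other disk with degree forcing a contradiction with the absence of ramification — concretely, $f$ restricted to such a disk would be an unbranched self-cover of a disk, hence degree one, but then $f$ would be a homeomorphism near $D_i$ contradicting that $c$ is a critical point on its boundary with local degree $m+1\ge 2$ and the two rays bounding $D_i$ being swapped or fixed by the local power map $w\mapsto w^{m+1}$. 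The cleanest route is probably: the local picture at $c$ shows the $m+1$ sectors between consecutive fixed rays are permuted cyclically by $f$, so if only two rays bound a disk $D_i$ with no other critical point inside, the angular width forces $m+1=2$ hence $m=1$, but a single-multiplicity critical point has only one fixed ray, contradicting that two of its rays bound $\gamma$. So I expect (6) to reduce to this local sector-counting at $c$ combined with the global constraint that all rays bounding $D_i$ must be accounted for; the delicate part is ensuring no other edge of $\TTT$ sneaks into $D_i$ without a vertex, which follows from (1)--(2) and connectivity of each edge's two endpoints lying in $C\sqcup R$.
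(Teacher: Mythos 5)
Your overall route is the paper's: parts (1)--(5) are treated as immediate consequences of the ray structure, Riemann--Hurwitz, the disjointness of $C$ and $R$ inside the set of $d+1$ simple fixed points, and the Holomorphic Fixed-Point Formula, and all of that checks out. One small correction to your discussion of (1) and (3): a fixed internal ray of $\Omega_c$ lands on $\partial\Omega_c\subset J(f)$, while critical points are superattracting and lie in the Fatou set, so a landing point can never be another critical point; this is why $V=C\sqcup R$ is genuinely a disjoint union and why your parenthetical ``or the landing point is another critical point'' never occurs.

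The problem is in the one item the paper actually proves, (6), where your concluding local computation is wrong in two places. First, a critical point $c$ of multiplicity $m$ has $m$ fixed internal rays (angles $j/m$), hence $m$ complementary sectors, not $m+1$, and these sectors are not permuted cyclically: in the B\"ottcher coordinate $w\mapsto w^{m+1}$ a sector of normalized width $1/m$ is mapped onto one of width $(m+1)/m>1$, i.e.\ it wraps over the entire disk. Second, the deduction ``the angular width forces $m+1=2$'' does not follow from anything you set up, and your earlier variant (``$f$ would be a homeomorphism near $D_i$, contradicting that $c$ is critical'') is not a contradiction either, since a degree-one restriction of $f$ to a sector at a boundary critical point is perfectly possible ($w\mapsto w^2$ on a quarter-disk, say). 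The correct finish, once you know $f\colon D\to D$ has degree one because $D$ contains no critical point, is the angle comparison: the two distinct fixed rays bounding $D$ at $c$ have angles that are distinct multiples of $1/m$, so the sector of $D$ at $c$ has normalized width at least $1/m$; degree one forces $f$ to be injective on that sector near $c$, and since the local model multiplies angles by $m+1$, injectivity requires width at most $1/(m+1)<1/m$. That single inequality is the entire content of the paper's proof of (6), and it yields the contradiction for every $m\ge 1$ rather than by reduction to $m=1$.
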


\pf The invariance properties and the first five statements are clear. To prove the last, suppose distinct edges $e_\pm$ join a critical point $c$ of multiplicity $m$ to $r$ and $e_+ \union e_-$ bounds a Jordan domain $D$ whose interior does not meet  $C\union R$.  Then $f: D \to D$ maps by degree $1$.  But this is impossible, since the difference in angles between the rays comprising $e_+$ and $e_-$ must be at least $1/m$ which is larger than $1/(m+1)$. 
\qed

\subsection{From multigraphs to Tischler graphs}
\label{subsecn:mg_to_tg}
In this subsection, we show that if $F_\GGG$ is obtained by blowing up a connected planar multigraph $\GGG$, then the Tischler graph $\TTT_f$ of the rational map $f$ equivalent to $F_\GGG$ may be reconstructed from $\GGG$. 

Fix a topological realization of such a graph $\GGG$; we emphasize that multiple edges are drawn as disjoint.  We therefore allow {\em bigons} as faces of the cell structure of $S^2$ determined by $\GGG$.   For each face of $\GGG$, choose a point $p$ in its interior, and denote by $R_F$ the resulting collection of points (we will construct $F$ momentarily).  Put $C_F=V(\GGG)$.   Consider the identity map, as an augmented (``marked'') branched covering with marked set $C_F \union R_F$.   

Let $F$ be the result of blowing up this marked branched covering along the edges of $\GGG$, as in \cite{kmp-tan:blow}.   By {\em ibid.}, Proposition 2, the resulting combinatorial class is well-defined.  We claim that $F$ is combinatorially equivalent, as a marked branched map, to a rational map $f$, with $R_f$ corresponding to $R_F$ and $C_f$ to $C_F$.  In particular, $f$ has a repelling fixed point for each face of $\GGG$.  
Since we have assumed throughout that $\#C \geq 2$, the orbifold of $F$ is hyperbolic, so it is sufficient to rule out obstructions.  But this is clear, since no obstruction can meet an edge of $\GGG$, and each face contains only one element of $R_F$.  

In this paragraph, we construct a combinatorial candidate for the Tischler graph.  Consider the sphere $S^2$ with its cell structure induced from $\GGG$.  Let $D^2$ denote the closed unit disk, let $D$ be a face of $\GGG$, and let $\psi_D: D^2 \to D$ be an attaching map.  We may assume that $\psi_D(0)\in R_F$.  Consider the subdivision of $D$ obtained by joining the origin to the $0$-cells on $\bdry D^2$ via what we will suggestively call ``internal'' $1$-cells and mapping the result forward by $\psi_D$.  We obtain in this way a bipartite graph $\TTT_F$ whose edges are internal $1$-cells, and whose vertices are the disjoint union of $R_F \union C_F$.  

\begin{theorem}
\label{thm:mg_to_tg}
Suppose $\phi: (S^2, C_F \union R_F) \to (\rs, C_f \union R_f)$ is a combinatorial equivalence from $F$ to $f$,  Then $\phi(\TTT_F)$ is isotopic relative to $C_f \union R_f$ to the Tischler graph of $f$.  
\end{theorem}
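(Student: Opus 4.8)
The plan is to prove that the combinatorially-constructed candidate $\TTT_F$ maps, under the equivalence $\phi$, onto the genuine dynamical Tischler graph $\TTT_f$ of $f$, up to isotopy rel $C_f \union R_f$. The key point is that $\TTT_F$ already carries all the structure that characterizes a Tischler graph among invariant objects, so the work reduces to (i) checking that $\TTT_F$ is pulled back to itself by $F$ up to isotopy, exactly as $\TTT_f$ is by $f$, and (ii) using a rigidity/uniqueness statement for such invariant bipartite graphs to conclude that $\phi(\TTT_F)$ and $\TTT_f$ coincide up to isotopy.

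First I would verify that $\TTT_F$ is $F$-invariant in the strong sense relevant here: each internal $1$-cell $e$ of $\TTT_F$, joining the center $r\in R_F$ of a face $D$ of $\GGG$ to a vertex $v\in C_F=V(\GGG)$ on $\bdry D$, has a preimage under $F$ that is isotopic to $e$ rel $C_F\union R_F$, and moreover $F|_{\TTT_F}$ is isotopic to $\id_{\TTT_F}$ rel $C_F \union R_F$. This is a direct consequence of the blowing-up construction: near each vertex $v$ of valence $m(v)$, the local degree of $F$ is $m(v)+1$, and the blow-up is performed precisely along the edges of $\GGG$ incident to $v$; an elementary local picture (cf.\ \cite[Figure 3]{kmp-tan:blow}) shows that the internal rays emanating from $v$ into the incident faces are fixed, up to isotopy, by $F$, and that $F$ permutes the remaining preimages of internal cells as inessential arcs. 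Symmetrically, since $\phi$ conjugates $F$ to $f$ up to combinatorial equivalence, $\phi(\TTT_F)$ is an $f$-invariant bipartite graph with $\phi(\TTT_F)$ isotopic to $\id$ rel $C_f \union R_f$, having $2d-2$ edges, vertex partition $C_f \sqcup R_f$, and with each face of $\GGG$ carrying exactly one repelling fixed point of $f$ — matching items (1)–(5) of Lemma \ref{lemma:tischlergraph} for $\TTT_f$.

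Next I would invoke uniqueness. The true Tischler graph $\TTT_f$ is characterized intrinsically: it is the union of the fixed internal rays of $f$, and each critical point $c$ of multiplicity $m$ contributes exactly $m$ fixed internal rays landing at the $m$ fixed points among the $f$-images of nearby points — in particular the cyclic order of the internal edges at $c$, and which repelling fixed point they land at, are forced. Both $\TTT_f$ and $\phi(\TTT_F)$ are invariant graphs realizing the same bipartite incidence data with the same cyclic order at each vertex of $C_f$ (the latter determined by the planar embedding of $\GGG$ around each $v\in V(\GGG)$, which $\phi$ transports faithfully). Since $f$ is rational and postcritically finite with hyperbolic orbifold, an invariant arc system rel $P(f)$ that is isotopic to the identity under pullback is unique in its isotopy class up to the combinatorics of landing — this is the standard rigidity for invariant spiders/internal ray systems; alternatively one can argue directly via the B\"ottcher-type coordinates $\Phi$ on each immediate basin $\Omega_c$, which identify the fixed internal rays uniquely once the landing point in $R_f$ is fixed. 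Thus $\phi(\TTT_F)$ and $\TTT_f$ have isotopic edges, edge by edge, rel $C_f\union R_f$, and hence are isotopic as graphs.

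The main obstacle is the uniqueness step: making precise, and proving, that an $f$-invariant bipartite internal-edge system with the prescribed incidence and cyclic-order data and which is pullback-isotopic to the identity must actually be isotopic to the Tischler graph. One must rule out, for instance, the possibility that $\phi(\TTT_F)$ differs from $\TTT_f$ by a nontrivial "twist" supported near some face or some $2$-cycle. Here item (6) of Lemma \ref{lemma:tischlergraph} and \cite[Cor. 3--4]{kmp-tan:blow} (no obstruction and no invariant arc can cross an edge of $\GGG$, equivalently an internal cell) should be leveraged: any discrepancy would produce either an essential invariant curve or an invariant arc crossing the edge system, contradicting those corollaries together with the combinatorial rigidity furnished by Thurston's theorem (each face contains a unique repelling fixed point, so there is no room to reroute). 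Once this is in place, the theorem follows.
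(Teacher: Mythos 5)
Your overall framing is reasonable, but the proof as written has a genuine gap at exactly the step you yourself flag as ``the main obstacle'': the uniqueness/rigidity statement for $f$-invariant bipartite arc systems. You never prove that an $f$-invariant internal-edge system with the prescribed incidence and cyclic-order data, pullback-isotopic to the identity, is unique up to isotopy rel $C_f \union R_f$; you only assert that it ``should'' follow from Lemma \ref{lemma:tischlergraph}(6), the corollaries of \cite{kmp-tan:blow}, and ``combinatorial rigidity furnished by Thurston's theorem.'' None of these supplies the statement. The corollaries of \cite{kmp-tan:blow} forbid invariant arcs and obstructions from \emph{crossing} a blown-up edge; they say nothing about two invariant arcs with the same endpoints and the same local landing data differing by, say, a Dehn twist supported in an annulus disjoint from the edges of $\GGG$. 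Likewise, the B\"ottcher-coordinate remark only identifies the fixed internal rays as subsets of $\P^1$; the issue is not what $\TTT_f$ is, but why the purely topological object $\phi(\TTT_F)$, defined only up to isotopy, lands in the same isotopy class. In effect the uniqueness lemma you invoke \emph{is} the theorem, so the argument is circular as it stands.

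The paper closes this gap by a direct construction rather than an appeal to uniqueness. After normalizing so that $C_F=C_f$, $R_F=R_f$, $\phi\simeq\id$ rel $C\union R$, and $F$ agrees with $f$ (and $\TTT_F$ with $\TTT_f$) near $C$, the invariance $F(\TTT_F)=\TTT_F$ yields a preferred subgraph of $f^{-1}(\TTT_F)$ together with an isotopy $h_t$, $t\in[0,1]$, carrying $\TTT_F$ onto it. One then lifts this isotopy repeatedly under $f$ to obtain a family $h_t$, $t\in[0,\infty)$, and uses the uniform contraction of $f^{-1}$ away from $C$ to show the family converges as $t\to\infty$; the limit is an isotopy from $\TTT_F$ to the union of the fixed internal rays, i.e.\ to $\TTT_f$. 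If you wish to keep your ``uniqueness'' formulation you must actually prove that lemma, and its proof is essentially this same pullback-and-contract argument, so nothing is saved by the detour.
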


\pf  The ideas are by now standard; we merely sketch the main ingredients.  By conjugating $F$ with $\phi$ we may assume $C=C_F=C_f, R=R_F=R_f$, $\phi \simeq \id$ relative to $C \union R$, and $\TTT_F \subset \rs$; we replace $F$ with this conjugate, so that $F: \rs \to \rs$.  
By construction, each $c \in C_f$ is incident to $m(c)$ edges in $\TTT_F$ and in $\TTT_f$, where $m(c)$ is the multiplicity of $c$.    It follows that $F$ is homotopic relative to $C \union R$ to a map $F'$ which agrees with $f$ near $C$; we replace $F$ with $F'$ and denote the result by $F$ again.  Furthermore, we may assume $\TTT_F=\TTT_f$ near $C$.  Since $F(\TTT_F)=\TTT_F$, there is a preferred subgraph of $f^{-1}(\TTT_F)$ isotopic to $\TTT_F$ via a family of homeomorphisms $h_t, t \in [0,1]$.  

Inductively, we may lift the isotopy $h_t, t \in [0,1]$ under iterates of $f$ to a real one-parameter family $h_t, t \in [0,\infty)$.  Since $f^{-1}$ is uniformly contracting away from $C$, the family $h_t$ converges as $t \to \infty$, yielding an isotopy from $\TTT_F$ to $\TTT_f$.  
\qed

\subsection{From Tischler graphs to multigraphs} 
\label{subsecn:tg_to_mg}
Suppose $f$ is a critically fixed rational map which arises from blowing up a planar multigraph $\GGG$, and let $\TTT$ be its Tischler graph, as constructed in the preceding subsection.  Thinking of edges of $\GGG$ as black, and edges of $\TTT$ as red, we observe the following.  The union $\GGG \union \TTT$ gives a cell structure to the sphere in which each cell is a triangle bounded by two red edges and one black edge,   with the red edges incident to a common vertex in $R_f$, and the black edges incident to two distinct vertices in $C_f$.   Moreover, each black edge $e$ lies in the boundary of exactly two such triangles, which therefore form a quadrilateral.  

\begin{figure}
\begin{center}
\includegraphics[width=3in]{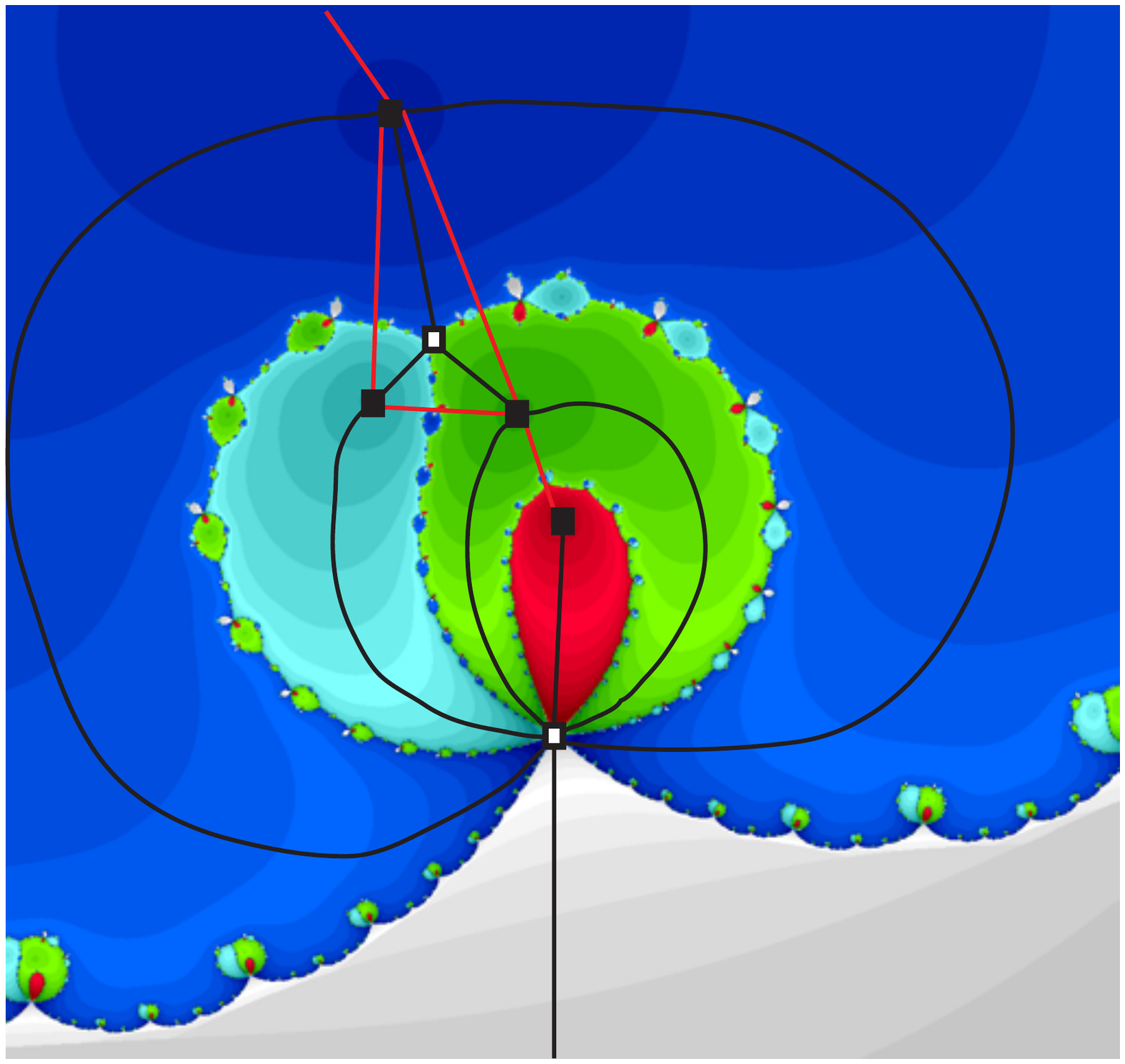}

{\sf Figure 1.  Detail of Example {\bf 33211a} of \S 11.  Infinity is a fixed critical point.  Shown in black is the Tischler graph $\TTT$ and in red the graph $\GGG$.}  
\end{center}
\end{figure}

This black edge $e$, which is a diagonal of this quadilateral, has two distinguished preimages: one each obtained by lifting the homotopy to either ``side'' formed by the union of two red edges.  By construction, each such  preimage $\tilde{e}$ of $e$ is homtopic to $e$ relative to $C(f)$.   

 This observation shows how $\GGG$ may be reconstructed from $\TTT$: the Tischler graph divides the sphere into quadrilaterals; the boundary of each  quadrilteral has precisely two vertices in $C_f$; the graph $\GGG$ is obtained from $\TTT$ by joining, for each quadrilateral,  this pair of vertices by a black edge.  To see this, simply note that each black edge has two preimages, each one obtained by lifting a homotopy joining this edge to one side of the quadrilateral in which it lies.  We conclude: 

\begin{theorem}
\label{thm:tg_to_mg}
Suppose $f$ is a critically fixed rational map with the property that its Tischler graph $\TTT_f$ is connected.  Then $f$ is equivalent to $f_\GGG$, where $\GGG$ is the planar multigraph obtained from $\TTT_f$.   
\end{theorem}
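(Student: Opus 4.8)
The plan is to reverse the discussion of \S\ref{subsecn:mg_to_tg} and \S\ref{subsecn:tg_to_mg}: from the connected graph $\TTT_f$ we extract a candidate planar multigraph $\GGG$, and then show that blowing up $\GGG$ reproduces $f$. First I would analyze the faces of the cell decomposition of $S^2$ induced by $\TTT_f$. By Lemma \ref{lemma:tischlergraph} the graph $\TTT_f$ is bipartite with vertex set $C\sqcup R$, so every face-boundary walk has even length, and by part (6) of that Lemma no face is a bigon, so every face-boundary walk has length at least $4$. Since $\TTT_f$ is connected, the Euler relation gives $\#(\mathrm{faces}) = 2 + |E(\TTT_f)| - (|C|+|R|) = 2d - (|C|+|R|)$, which together with $|C|+|R|\le d+1$ forces $\#(\mathrm{faces})\ge d-1$; on the other hand $4\,\#(\mathrm{faces})\le \sum_{\mathrm{faces}}(\text{boundary length}) = 2|E(\TTT_f)| = 4d-4$, so $\#(\mathrm{faces})\le d-1$. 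Hence $\#(\mathrm{faces}) = d-1$, $|C|+|R| = d+1$, and every face is a quadrilateral; a short argument using Lemma \ref{lemma:tischlergraph}(6) (a repeated vertex or edge on a length-$4$ boundary walk would produce a $2$-cycle bounding a disk disjoint from $C$) then shows each such quadrilateral $Q$ has four distinct vertices, alternately two in $C$ and two in $R$.

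For each quadrilateral $Q$ choose a diagonal arc $e_Q\subset\overline{Q}$ joining its two $C$-vertices, with the $e_Q$ pairwise disjoint, and let $\GGG$ be the planar embedded graph with vertex set $C$ and edges $\{e_Q\}$. There are no loops, and parallel edges occur exactly when two quadrilaterals share the same pair of $C$-vertices, so $\GGG$ is a planar multigraph with $|C|$ vertices and $d-1$ edges. The diagonals at a fixed $c\in C$ correspond bijectively to the sectors at $c$ cut out by the edges of $\TTT_f$, hence to the $m_c$ fixed rays at $c$, so $\mathrm{deg}_\GGG(c) = m_c$; thus $\GGG$ has exactly the vertex multiplicities of the critical points of $f$, and $f$ and $f_\GGG$ will have the same branch data. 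Around each $r\in R$ the diagonals of the quadrilaterals incident to $r$ close up into a loop $\lambda_r$ of $\GGG$ bounding a disk whose only $\TTT_f$-vertex is $r$; these disks are disjoint and exhaust $S^2$, so the faces of $\GGG$ biject with $R$ and the relation $|C|-(d-1)+|R| = 2$ shows $\GGG$ is connected. By Theorem \ref{thm:blowupgraph}, blowing up $\GGG$ yields a critically fixed rational map $f_\GGG$.

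Next I would check that the constructions of \S\ref{subsecn:mg_to_tg} and \S\ref{subsecn:tg_to_mg} are mutually inverse here. Running the construction of \S\ref{subsecn:mg_to_tg} on $\GGG$ — choosing as the marked interior point of the face of $\GGG$ around $r$ the vertex $r$ itself, and joining it to the $0$-cells on that face's boundary — returns precisely $\TTT_f$, since those $0$-cells are the $C$-vertices of the quadrilaterals incident to $r$ and the joining arcs are the edges of $\TTT_f$ at $r$. By Theorem \ref{thm:mg_to_tg}, the Tischler graph of $f_\GGG$ is then isotopic, relative to $C\sqcup R$, to $\TTT_f$, and both $f$ and $f_\GGG$ act as the identity on their Tischler graphs up to isotopy relative to $C\sqcup R$. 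It remains to promote this to $f\simeq f_\GGG$. Isotoping relative to $C\sqcup R$ we may assume $f(\TTT_f)=\TTT_f$ and that $f$ has the standard local form near $C$ (superattracting, fixing each incident ray) and near $R$ (repelling, fixing each incident ray), and likewise for $f_\GGG$ on $\TTT_{f_\GGG}=\TTT_f$. Since $P(f)=C\subset\TTT_f$, the restriction of $f$ to the interior of each quadrilateral is an \emph{unbranched} covering onto a union of faces of $\TTT_f$ and the open edges between them, and similarly for $f_\GGG$; the combinatorial type of such a covering is forced by invariance of $\TTT_f$, by $f|_{\TTT_f}\simeq\id$, and by the total degree $d$ over each quadrilateral, so $f$ and $f_\GGG$ agree on a neighborhood of $\TTT_f$ up to isotopy; on the complementary disks, which contain no postcritical points, any two unbranched coverings with the same boundary behavior are isotopic, and splicing gives $f\simeq f_\GGG$. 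Alternatively, since $f$ and $f_\GGG$ have the same branch data, Theorem \ref{theorem:aretwists} writes $f\simeq h\circ f_\GGG$ with $h$ fixing $C$ pointwise, and the coincidence of Tischler graphs is used to show $h$ is isotopic relative to $C$ to the identity.

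\textbf{Main obstacle.} The crux is this last step: showing that, when $\TTT_f$ is connected, the isotopy class of $\TTT_f$ together with its standard local dynamics is a complete invariant of the combinatorial class — equivalently, that the twisting homeomorphism relating $f$ to $f_\GGG$ is trivial. Steps one through three are bookkeeping with the Euler characteristic, Lemma \ref{lemma:tischlergraph}, and the constructions of the two preceding subsections; the surgery and rigidity input needed for the final step — normalizing $f$ near $\TTT_f$ and recognizing the complementary pieces as forced — is where connectedness of $\TTT_f$ is genuinely used and where the real work lies.
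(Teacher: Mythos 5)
Your combinatorial groundwork is correct and actually supplies something the paper's own discussion omits: the paper derives the quadrilateral structure of the Tischler graph only under the hypothesis that $f$ already equals some $f_\GGG$ (\S\ref{subsecn:tg_to_mg} opens with that assumption), which for the theorem as stated would be circular. Your Euler-characteristic count --- $F \ge d-1$ from $|C|+|R|\le d+1$, and $F\le d-1$ from bipartiteness together with the no-empty-bigon content of Lemma \ref{lemma:tischlergraph}(6) --- establishes intrinsically that every face of a connected Tischler graph is a quadrilateral with two $C$-corners and two $R$-corners, that $|C|+|R|=d+1$, and that the diagonal graph $\GGG$ is connected with valence $m_c$ at each $c$. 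That is a genuine and needed supplement to the paper's argument.

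Where you diverge is the final identification $f\simeq f_\GGG$, which you correctly flag as the crux but leave open. The paper's (terse) mechanism is different from both of your proposed routes: each diagonal $e_Q$ is homotopic rel endpoints to either of the two sides of $Q$, and each side is a union of two fixed internal rays, hence an $f$-invariant arc on which $f$ is isotopic to the identity rel endpoints; lifting the homotopy from $e_Q$ onto a side therefore produces a preimage of $e_Q$ homotopic to $e_Q$ rel $C(f)$, and the two sides give two distinct such preimages. This per-edge statement is exactly the signature of an arc that has been blown up once, and it is what the paper uses --- rather than a global normalization of $f$ near $\TTT_f$ or an appeal to Theorem \ref{theorem:aretwists} --- to recognize $f$ as the result of blowing up the diagonals. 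Your first route (normalize near $\TTT_f$ and compare unbranched coverings of the complementary pieces) is in the same spirit and can be made to work, but the degree bookkeeping over each quadrilateral is precisely what the two-preimages observation packages. Your second route is the one I would abandon: Theorem \ref{theorem:aretwists} only yields $f\simeq h\circ f_\GGG$ with $h$ a pure mapping class, and proving $h$ trivial from coincidence of Tischler graphs is essentially the original problem restated --- controlling such twists is exactly what the paper says it cannot do in general. So: keep your paragraphs one and two, and close the argument by exhibiting, for each edge of $\GGG$, the two self-homotopic preimages coming from the two sides of its quadrilateral, then invoking the naturality of the blowing-up surgery of \S\ref{subsecn:blow}.
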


\subsection{Every Tischler polynomial is a blown-up star}
\label{subsecn:star}

As a special case, we have 

\begin{corollary}
\label{cor:stars}
Suppose $f$ is a a polynomial of degree $d$ each of whose critical points is also a fixed-point.  Then $f$ is equivalent to $F_\GGG$, where $\GGG$ is a ``star'' graph with $d-1$ distinct edges incident to a common vertex.  
\end{corollary}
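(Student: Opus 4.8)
The plan is to apply Theorem~\ref{thm:tg_to_mg}, so the only real work is to identify the Tischler graph of a critically fixed polynomial and to verify that it is connected and is a star. First I would set up coordinates: conjugate so that the point at infinity is the totally-ramified fixed critical point of the degree-$d$ polynomial $f$, with local degree $d$ there. Its immediate basin $\Omega_\infty$ is the basin of infinity, which is completely invariant and whose complement (the filled Julia set) is connected precisely because every other critical point is also a fixed point, hence lies in the filled Julia set and has a bounded orbit. The B\"ottcher coordinate $\Phi$ at infinity conjugates $f$ to $w\mapsto w^d$ on a neighborhood of $\infty$, and because all critical points other than $\infty$ are (fixed, hence) non-escaping, $\Phi$ extends to a conformal isomorphism from $\Omega_\infty$ to the exterior of the closed unit disk. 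The fixed external rays are then exactly the rays of angle $j/(d-1)$, $j=0,\dots,d-2$, and these are the edges of the Tischler graph lying in $\Omega_\infty$.

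Next I would handle the bounded critical points. Each finite critical point $c$ has multiplicity $m_c$ and is a fixed point, so by the discussion of \S\ref{secn:tischler} its immediate basin $\Omega_c$ contributes $m_c$ fixed internal rays, all landing at $c$. Since $\sum_c m_c$ over the finite critical points is $(2d-2)-(d-1)=d-1$ (the point at infinity absorbs multiplicity $d-1$), these together with the $d-1$ external rays give $|E|=2d-2$ edges, matching Lemma~\ref{lemma:tischlergraph}(2). The key topological point is that each fixed external ray in $\Omega_\infty$ lands at a repelling fixed point on the Julia set; I would argue that the landing points of the $d-1$ fixed external rays are precisely the finite critical points, with the ray of angle $j/(d-1)$ landing at whichever critical point has an internal ray meeting it — equivalently, the external and internal fixed rays pair up to form the edges of $\TTT$, so that $\TTT$ is the union of $d-1$ arcs, each running from $\infty$ to a finite critical point. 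This already shows $\TTT$ is connected (everything meets $\infty$) and that $\TTT$ is a star centered at $\infty$ with $d-1$ edges; multiple edges to the same finite critical point $c$ occur exactly when $m_c>1$.

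Then Theorem~\ref{thm:tg_to_mg} applies: since $\TTT_f$ is connected, $f$ is equivalent to $f_\GGG$ where $\GGG$ is the planar multigraph obtained from $\TTT_f$ by the quadrilateral construction. Finally I would unwind what $\GGG$ is: the Tischler star $\TTT$ divides the plane into $d-1$ ``sectors'' at infinity, each bounded by two consecutive external rays; carrying out the dual/reconstruction recipe of \S\ref{subsecn:tg_to_mg} shows that $\GGG$ is itself a star — with one vertex for each finite critical point and one central vertex — with $d-1$ edges total, the multiplicity of the edge to $c$ being $m_c$. To match the statement exactly (``$d-1$ distinct edges incident to a common vertex''), note that when all finite critical points are simple this is a genuine star with $d-1$ leaves; in general it is a multigraph star, and the weighted picture is dual to Tischler's bipartite tree. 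I would close by remarking that, alternatively, one can bypass Theorem~\ref{thm:tg_to_mg} and invoke the Berstein--Levy Theorem~\ref{thm:berstein-levy} together with Theorem~\ref{thm:injective} to pin down the combinatorial class of $f_\GGG$ among critically fixed polynomials.

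The main obstacle I expect is the landing/pairing statement: showing that each fixed external ray of $f$ lands (it does, being eventually periodic on a locally connected-free argument is not available in general, but for a polynomial with connected Julia set a fixed external ray always lands at a repelling or parabolic fixed point, and a postcritically finite polynomial has no parabolic points), and then identifying the landing point as a finite critical point so that the external and internal fixed rays fit together into a single arc. Equivalently, one must verify that no fixed external ray lands at a non-critical repelling fixed point — this follows because such a fixed point would have to be the landing point of an external ray of some angle fixed by multiplication by $d$ on $\R/\Z$, i.e. angle $0$, and more carefully by counting: there are $d-1$ fixed external rays and, via the internal-ray count above, exactly $d-1$ internal ray-ends at finite critical points, and the Tischler graph is known to be a graph on $C\sqcup R$ with $R=\{\infty\}$ here, forcing the pairing. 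Once that combinatorial bookkeeping is in place the rest is formal.
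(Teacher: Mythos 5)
Your overall strategy --- compute the Tischler graph of a critically fixed polynomial, verify it is connected, and apply Theorem~\ref{thm:tg_to_mg} --- is exactly the route the paper intends (the corollary appears in \S\ref{subsecn:star} as a special case of \S\ref{subsecn:tg_to_mg}). The initial bookkeeping is also correct: there are $d-1$ fixed external rays of angles $j/(d-1)$, and $\sum_c m_c=d-1$ fixed internal rays in the bounded basins, giving $2d-2$ edges in all.

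The gap is your identification of where these rays land. You assert that the fixed external rays land at the finite critical points and that external and internal rays ``pair up'' into $d-1$ arcs from $\infty$ to the $c_i$, so that $\TTT$ is a star on $C$ alone with ``$R=\{\infty\}$''. This cannot be right: the finite critical points of a critically fixed polynomial are superattracting fixed points lying in the interior of the filled Julia set, whereas external rays land on $J(f)=\partial\Omega_\infty$; indeed you yourself note that a fixed external ray of a postcritically finite polynomial lands at a \emph{repelling} fixed point. Likewise each fixed internal ray of $\Omega_{c_i}$ runs from $c_i$ to a repelling fixed point on $\partial\Omega_{c_i}$. So $\TTT$ is genuinely bipartite between $C=\{\infty,c_1,\dots,c_k\}$ and the set $R$ of repelling fixed points receiving fixed rays, as Lemma~\ref{lemma:tischlergraph}(1) states; your picture halves the edge count from $2d-2$ to $d-1$, erases $R$ entirely, and misplaces $\infty$ (which lies in $C$, not $R$). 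What actually has to be proved --- and this is the substance of Tischler's theorem, not formal bookkeeping --- is that (i) every repelling fixed point receiving a fixed internal ray of a bounded basin also receives a fixed external ray, so every component of $\TTT$ contains $\infty$ and $\TTT$ is connected; and (ii) around each $r\in R$ the fixed rays alternate between external rays and internal rays of bounded basins, so that each of the $d-1$ quadrilateral faces of $\TTT$ has $\infty$ as one of its two $C$-vertices. Only then does the reconstruction of \S\ref{subsecn:tg_to_mg} insert one edge from $\infty$ to a finite critical point per quadrilateral, producing the star with $d-1$ edges (the edge to $c_i$ having multiplicity $m_i$). Your proposed fallback is also not a proof as stated: Berstein--Levy shows $F_\GGG$ is realized by a complex polynomial, and Theorem~\ref{theorem:aretwists} shows $f$ is a \emph{twist} of $F_\GGG$, but neither shows the twist is trivial, which is precisely what the Tischler-graph argument is for.
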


\section{Subdivision rules}

Cannon, Floyd, and Parry \cite{cfp:fsr} have formulated the notion of a {\em finite subdivision rule}.  In our setting, these correspond to Thurston maps $f: (S^2,P) \to (S^2, P)$ which preserve some additional structure: there exist cell structures $X_0, X_1$ on $S^2$ such that $X_1$ refines $X_0$ and the restriction of $f$ to each cell of $X_1$ is a homeomorphism.  

If $f$ is a rational map equivalent to $F_\GGG$ for some graph $\GGG$, then its the Tischler graph $\TTT$ forms the 1-skeleton of a cell structure $X_0$ on the sphere.  This structure may be lifted under $f$ to obtain a structure cell structure  $X_1$ such that $f$ becomes the subdivision map of a finite subdivision rule; see Figure 5.  

\section{Examples} 
\label{secn:examples} 

Critically fixed polynomials are already classified by their Tischler graphs; equivalently, by the corresponding star graph.  We therefore focus exclusively on the non-polynomial cases.  
After classifying by degree, we classify first by the corresponding partition of $2d-2$; the exclusion of polynomials implies that we need consider only those partitions of $2d-2$ whose largest part is at most $d-2$.  
Next we classify by abstract graphs, and then by planar multigraph.   

In degrees 2 and 3, the only examples are conjugate to polynomials, as is easily verified.  The results given below for degree 4 and 5  were obtained by using the software package Maple to find all rational maps realizing the given partition, computing the number of conjugacy classes, and then comparing this count with the count of planar multigraphs.  Below, by a ``sticker'' we mean an edge of a graph with a vertex of valence 1. 
The assertions for degrees 4 and 5 below follow from solving the equations arising from the critical orbit portraits as in \cite{kmp:census}. 

Before turning to the data, we pause to make mention of a few special subfamilies.  

\subsection*{Newton's method} 
By considering the difference $f(z)-N_p(z)$, it is easy to see that any critically fixed rational map $f$ of degree $d$ with a repelling fixed-point at infinity and $d$ distinct fixed finite critical points $c_1, \ldots, c_d$ is equal to the Newton's method $N_p(z)$ applied to the polynomial $p(z)=(z-c_1)\cdots(z-c_d)$.   So if $\GGG$  is any connected multigraph that is a tree and has no multiple edges, then the rational map $f_\GGG$ is conjugate to a Newton's method.  Such examples are very special among Newton methods. 

\subsection*{Belyi maps} 
When $\Crit(f)=3$, any such map is a blown-up triangle.  Since $\#V(f)=3$, the map $f$ is a so-called {\em Belyi map}.  It seems likely that there should be a closed-form formula for $f$ in terms of the local degrees at points in $C(f)$.  
 
\newpage

\subsection*{Degree 4} 

\noindent{$\mathbf{[2,2,2]}$} There is a unique such example, obtained by blowing up the edges of a graph whose underlying set is a triangle; it is conjugate to $z \mapsto z^3(z-2)/(1-2z)$. McMullen \cite[Prop. 1.2]{ctm:iterative} uses this to give a generally convergent iterative algorithm for approximating the (distinct) roots of the general cubic polynomial.  Curiously, it also arises as one of the two maps comprising the correspondence on moduli space induced by the generic cubic rational function; see \cite{bekp} and \cite{lodge:thesis}.  Even more curiously, up to pre- and post-composition with M\"obius transformations, it appears to be the unique non-bicritical rational map whose fibers are conformally rigid; this observation is due to A. Saenz.  To see this, simply notice that as $w \to c \in C$, the fiber above $w$ approaches a configuration which has nearly a symmetry of order 3, so that the $j$-invariant of this fiber is a bounded holomorphic function of $w$.  
\gap

\noindent{$\mathbf{[2,2,1,1]}$} There is a unique such example, obtained by blowing up a planar segment of length 3; it is conjugate to a Newton method.  Up to conjugacy it is given by 
\[ z \mapsto -{\frac {{z}^{3} \left( 2\,z+3\,zc-3-4\,c \right) }{z+c}}\]
where $c=-3/8\pm \sqrt{5}/8$.

\subsection*{Degree 5}\ \\

\noindent{$\mathbf{[3,3,2]}$} There is a unique such example, obtained by blowing up a triangle with a single doubled edge. It is given by 
\begin{equation}
\tag{5.1}
 \hskip 0.5in z \mapsto -{\frac {{z}^{4} \left( 3\,z-5 \right) }{5\,z-3}}.
 \end{equation}
\gap

\noindent{$\mathbf{[3,3,1,1]}$} As an abstract graph, there is a unique such example: a segment of length three with its central edge doubled.  However, as a planar graph, there are two possibilities: (i) a planar segment with a doubled central edge, and (ii) a bigon with a sticker on both the outside and inside. 

They are given by
\[ z\mapsto -{\frac {{z}^{4} \left( 3\,z+4\,zc-4-5\,c \right) }{z+c}}\]
where in case (i) we have 
\begin{equation}
\tag{5.2} c=-1/2\pm \sqrt{5}/10
\end{equation}

and where in case (ii) we have 
\begin{equation}
\tag{5.3}
c=-3/10\pm \sqrt{21}/10.  
\end{equation}
Galois conjugate pairs turn out to yield M\"obius conjugate maps.  
\gap

\noindent{$\mathbf{[3,2,2,1]}$} There are two graphs realizing this portrait: 
(i) a segment of length 3 with an end segment doubled, yielding 
\begin{equation}
\tag{5.4}  z \mapsto \frac{z^3(16.39759477z^2-43.99398693z+33.32932462)}{6z-.267067538}, 
\end{equation} 
and (ii) a triangle with a sticker, yielding 
\begin{equation}
\tag{5.5}  z \mapsto \frac{z^3(1.02035014Iz^2-(3-2.55087534I)z+6.+1.70058356I)}{6z-3.+.17005836I}.
\end{equation}

\gap

\noindent{$\mathbf{[2,2,2,2]}$} There is a unique such example, obtained by blowing up the edges of a square.  Up to conjugacy it is given by 
\begin{equation}
\tag{5.6}
 z \mapsto -\frac{z^3(-\frac{3}{5}z^2+3z-3)}{-3z^2+6z-\frac{12}{5}}.
 \end{equation} 
\gap

\noindent{$\mathbf{[3,2,1,1,1]}$} There seems to be a unique such example: a letter ``Y'' with vertical segment of length 2, and no doubled edges.  It is conjugate to a Newton's method.  Up to conjugacy it is given by the formula 
\begin{equation}
\tag{5.7} 
z \mapsto \frac{z^3(0.00391776z^2-0.06348335z+.41144821)}{z-.64811739}.
\end{equation} 
  Finding this map algebraically is formidable: with one normalization, one is left with a system of equations which reduces to solving a sextic univariate polynomial; it can be shown that certain pairs of roots yield conjugate maps, while other pairs yield the same map. 
\gap

\noindent{$\mathbf{[2,2,2,1,1]}$} The graph is a planar segment of length 4, with no doubled edges. 
There is a unique such example, given by the formula 
\begin{equation}
\tag{5.8} 
{\small z \mapsto {\frac {{z}^{3} \left( -\frac{1}{20}\,{z}^{2}-\frac{1}{4}\,z+1 \right) }{{z}^{2}-\frac{1}{4}\,
z-\frac{1}{20}}}.}
\end{equation} 
Shown below are the corresponding Julia sets of the maps (5.1)-(5.8); the colors correspond to the basins of attraction. 
\begin{center}
\includegraphics[width=5in]{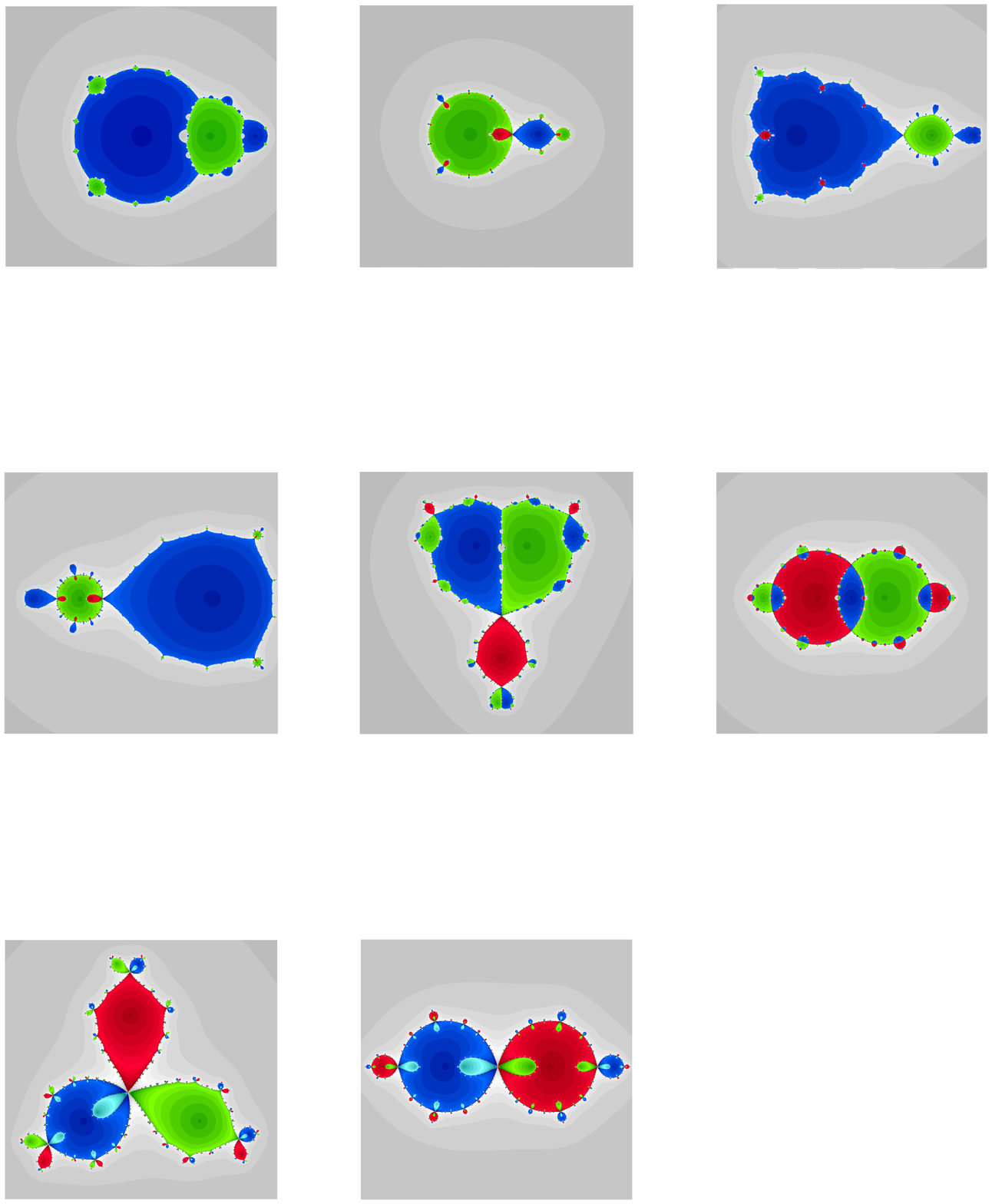}
{\sf Figure 2.  From upper-left to lower-right: Julia sets of the maps (5.1)-(5.8).  Pixels are colored according to their basin of attraction.}  
\end{center}

\newpage

\subsection*{Degree 6} 

Given the complexity encountered in the enumeration of quintics via naive algebraic methods, we take a different approach to the enumeration of sextics. 

Our first task is to sensibly enumerate the corresponding partitions.  Within maps of a fixed degree, one naive way to do this is to note that given any positive integer $N$, the set of partitions of $N$ is totally ordered by regarding the parts as digits in a fixed base $B$ larger than $N$ and computing the resulting integer.  We obtain using e.g. $B=10$: 

\begin{center} 
\noindent{$\mathbf{421111}$}

\noindent{$\mathbf{331111}$}

\noindent{$\mathbf{322111}$}

\noindent{$\mathbf{222211}$}

\noindent{$\mathbf{43111}$}

\noindent{$\mathbf{42211}$}

\noindent{$\mathbf{33211}$}

\noindent{$\mathbf{32221}$}

\noindent{$\mathbf{22222}$}

\noindent{$\mathbf{4411}$}

\noindent{$\mathbf{4321}$}

\noindent{$\mathbf{4222}$}

\noindent{$\mathbf{3331}$}

\noindent{$\mathbf{3322}$}

\noindent{$\mathbf{442}$}

\noindent{$\mathbf{433}$}
\end{center}
Note that the first four will correspond to Newton methods, and the last two to Belyi maps.  

Instead of systematically tabulating data, we discuss an example.  
\gap

The partition $\mathbf{33211}$ corresponds to three abstract isomorphism and four planar isomorphism classes.  The four planar multigraphs are shown below, along with a corresponding wreath recursion.  In each, the $i$th generator surrounds the $i$th vertex, but further dependence of the wreath recursion on the precise choice of the generators and connecting paths has been suppressed.   For the first graph, we give also the numerical approximation given by Bartholdi's program, the approximate locations of the attractors, and an image of the Julia set (Figure 3); detail is shown in Figure 1.  The first two tables give the coefficients of the numerator and of the denominator.  

\newpage

\subsection*{33211a}
\[\begin{tikzpicture}[scale=1]
\node(p1) at (-1,.5) {$\bullet$};
\node(p2) at (1,.5) {$\bullet$};
\node(p3) at (0,-1.2) {$\bullet$};
\node(p4) at (-2,1) {$\bullet$};
\node(p5) at (2,0) {$\bullet$};

\draw (-1,.5) node [above]{4} -- (1,.5) node [above]{1} -- (0,-1.2) node [below] {3} -- cycle;
\draw (1,.5) -- (2,0) node [right] {2};
\draw (-1,.5) -- (-2,1) node [left]{5};
\node[right] at (4,0) {
      \begin{minipage}{.4\textwidth}
\begin{align*}
\Phi(a) & = \langle abc, 1, b^{-1}, c^{-1}, 1, 1 \rangle (1432)\\
\Phi(b) & = \langle 1, b, 1, 1, 1, 1 \rangle (12)\\
\Phi(c) & = \langle 1, 1, c, 1, 1, 1 \rangle (135)\\
\Phi(d) & = \langle 1, 1, 1, d, 1, 1 \rangle (1546)\\
\Phi(e) & = \langle 1, 1, 1, 1, 1, e \rangle (16)\\\\
\end{align*}
      \end{minipage}
   };
\end{tikzpicture}\]

\[ \begin{array}{cc} 
\mbox{\bf numerator} & \mbox{\bf denominator} \\ \hline 
%
\begin{array}{c|r|r}
\mbox{\bf degree} & \mathbf{\Real} & \mathbf{\Imag} \\ \hline 
0 & -.2823382388 & .0873666659 \\ \hline 
1 &  -.6106212003& -.2210949962\\ \hline
2 & -1.5258285009& 1.6871931450\\ \hline 
3 & 3.3728946411& -2.2256057710\\ \hline 
4 & -.5227487239 & 3.1529307220\\ \hline
5 & -8.5737682757& .3796365169 \\  \hline
6 & -.3915110282& -1.2571815540\\\hline
\end{array}
& 
\begin{array}{c|r|r}
\mbox{\bf degree} & \mathbf{\Real} & \mathbf{\Imag} \\ \hline 
0 & -1.0000000000 & 0 \\ \hline 
1 & -3.6399623589 & .5053341577 \\   \hline 
2 & 1.8652782469 & -2.2777114530\\   \hline 
3 &  -2.2515718241 & 7.7088079910 \\   \hline 
4 & -9.2611252563 & .6459068671 \\   \hline 
5 & 0 & 0 \\    \hline 
6 & 0 & 0 \\   \hline 
\end{array}
\end{array}
\]
\[ 
\begin{array}{c|r|r}  
\mbox{\bf vertex} & \mathbf{\Real} & \mathbf{\Imag}  \\  \hline 
 p_1 & 0.26444 & -0.046607\\  \hline 
p_2 & 0.52144& -0.66036 \\   \hline 
p_3 & -0.59233& 0.023373\\   \hline 
p_4 & -0.47002& 1.7073\\   \hline 
p_5 & \infty  & \infty \\  \hline 
\end{array}
\] 

\begin{center}
\includegraphics[width=2.5in]{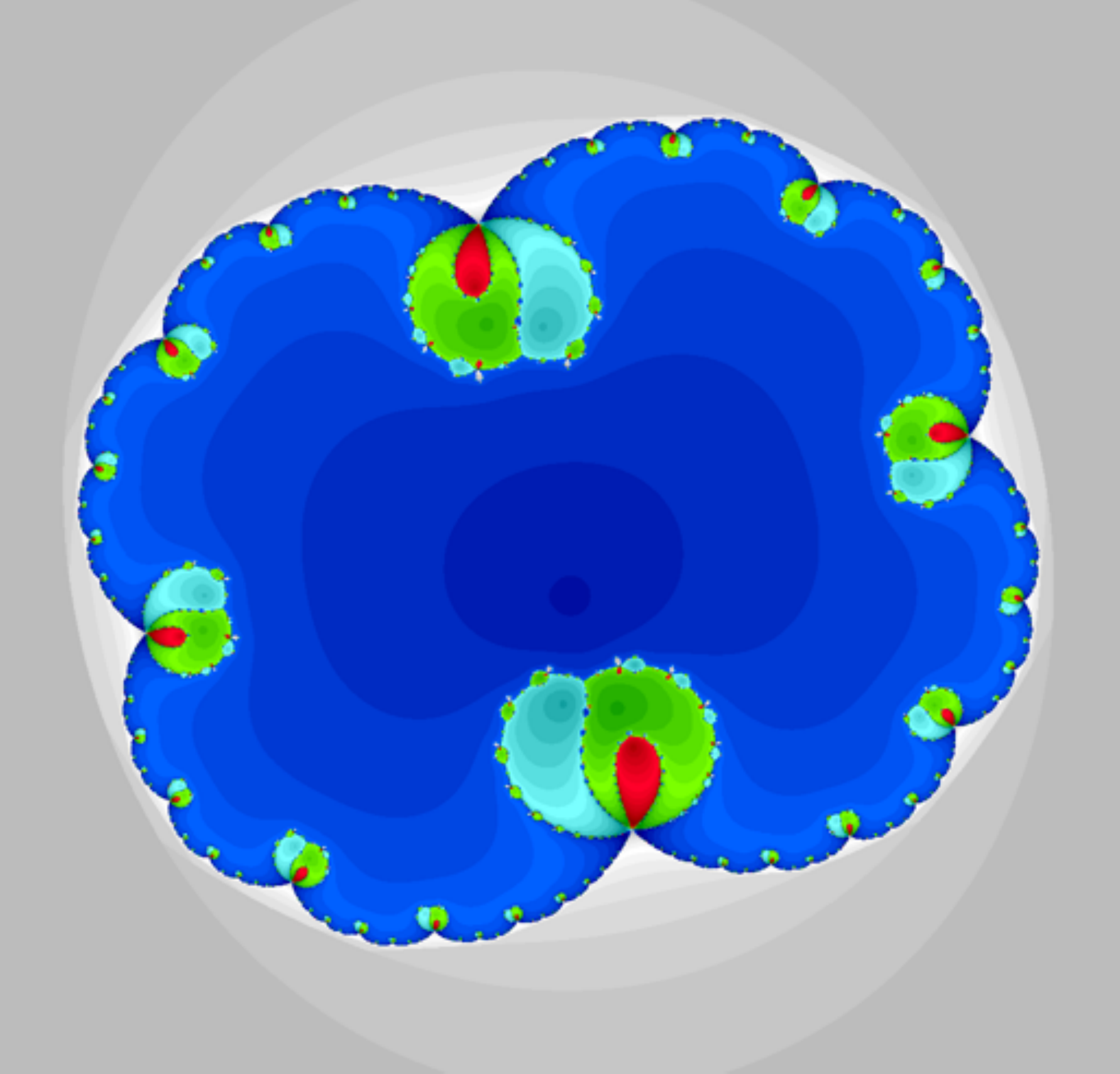}

{\sf Figure 3.  The closures of the five immediate basins meet at a repelling fixed-point, located near the bottom of the image, and three (light blue, green, dark blue) also meet at another repelling fixed-point, a bit higher up in the image.  Detail is given in Figure 1.}  
\end{center}

\newpage
\subsection*{33211b}
\[\begin{tikzpicture}[scale=1]
\node(p1) at (-1,-.5) {$\bullet$};
\node(p2) at (1,-.5) {$\bullet$};
\node(p3) at (0,1.2) {$\bullet$};
\node(p4) at (-2,-1.5) {$\bullet$};
\node(p5) at (0,.1) {$\bullet$};

\draw (-1,-.5) node [left]{4} -- (1,-.5) node [right]{2} -- (0,1.2) node [above] {1} -- cycle;
\draw (0,.1) node [right]{3} -- (0,1.2);
\draw (-1,-.5) -- (-2,-1.5) node [left]{5};
\node[right] at (4,0) {
      \begin{minipage}{.4\textwidth}
\begin{align*}
\Phi(a) & = \langle abc, 1, b^{-1}, c^{-1}, 1, 1 \rangle (1432)\\
\Phi(b) & = \langle 1, b, 1, 1, 1, 1 \rangle (125)\\
\Phi(c) & = \langle 1, 1, c, 1, 1, 1 \rangle (53)\\
\Phi(d) & = \langle 1, 1, 1, d, 1, 1 \rangle (1546)\\
\Phi(e) & = \langle 1, 1, 1, 1, 1, e \rangle (16)\\\\
\end{align*}
      \end{minipage}
   };
\end{tikzpicture}\]

\subsection*{33211c}
\[\begin{tikzpicture}
\node(p1) at (1.5,1.5) {$\bullet$};
\node(p2) at (0,3) {$\bullet$};
\node(p3) at (-1.5, 1.5) {$\bullet$};
\node(p4) at (-3,0) {$\bullet$};
\node(p5) at (-4.5,-1.5) {$\bullet$};
\draw (1.5,1.5) node [right]{2} -- (0,3);
\draw (-1.5,1.5) node [above]{3}-- (-3,0)node [above]{4} (-4.5,-1.5) node[above]{5};
\draw (0,3) node [above]{1} to [bend left=14] (-1.5,1.5) to [bend left=14] (0,3);
\draw (-1.5,1.5) -- (-4.5,-1.5);
   \node[right] at (4,0) {
      \begin{minipage}{.4\textwidth}
\begin{align*}
\Phi(a) & = \langle ab, 1, 1, b^{-1}, 1, 1 \rangle (1423)\\
\Phi(b) & = \langle 1, b, 1, 1, 1, 1\rangle (132)\\
\Phi(c) & = \langle 1, 1, 1, c, 1, 1 \rangle (145)\\
\Phi(d) & = \langle 1, 1, 1, 1, d, 1 \rangle (156)\\
\Phi(e) & = \langle 1, 1, 1, 1, 1, e \rangle (16)\\\\
\end{align*}
      \end{minipage}
   };
\end{tikzpicture}\]

\subsection*{33211d}

\[\begin{tikzpicture}
\node(p1) at (0,-2) {$\bullet$};
\node(p2) at (1.5,-1.5) {$\bullet$};
\node(p3) at (0, 0) {$\bullet$};
\node(p4) at (-1.5,-1.5) {$\bullet$};
\node(p5) at (-3,-3) {$\bullet$};
\draw (0,-2) node [left]{3} -- (0,0);
\draw (0,0) node [above]{1}-- (-1.5,-1.5)node [right]{4} (-3,-3) node[below]{5};
\draw (1.5,-1.5) node [above]{2} -- (0,0);
\draw (-1.5,-1.5) to [bend left=14] (-3,-3) to [bend left=14] (-1.5,-1.5);
   \node[right] at (4,0) {
      \begin{minipage}{.4\textwidth}
\begin{align*}
\Phi(a) & = \langle abc, 1, b^{-1}, c^{-1}, 1, 1 \rangle (1432)\\
\Phi(b) & = \langle 1, b, 1, 1, 1, 1\rangle (12)\\
\Phi(c) & = \langle 1, 1, c, 1, 1, 1 \rangle (13)\\
\Phi(d) & = \langle 1, 1, 1, d, 1, 1 \rangle (1456)\\
\Phi(e) & = \langle 1, 1, 1, 1, e, 1 \rangle (165)\\\\
\end{align*}
      \end{minipage}
   };
\end{tikzpicture}\]

Reflecting on the fact that wreath recursions depend on choices, it is perhaps remarkable that the preceding examples have such a simple form.  Though perhaps hard to define ``simple'' precisely, indeed, this is not an accident.  In \cite{np:monodromy},  an algorithm is given which takes, as input, a connected planar multigraph $\GGG$, together with certain additional data, and returns as output a wreath recursion for the corresponding Thurston map $f_\GGG$.  This algorithm was effectively implementable by hand, resulting in the efficient calculation of the wreath recursions for all quartic, quintic, and sextic examples.  

In the following section, we mention below a few key features of this algorithm.  Suppose $\GGG$ is a planar multigraph with $n=\#V(G)$ vertices, and $f=f_\GGG$ is the corresponding Thurston map. 

\section{Computation of wreath recursions} 
\label{secn:wr_computation}
In outline, here is the algorithm which was employed for the computation of a wreath recursion associated with the maps $f_\GGG$.  

\begin{enumerate}
\item Begin with a topological realization of $\GGG$ (that is, drawing multiple edges as parallel), and chooses a spanning tree $T$ for the planar dual $\GGG^*$.  Choose a basepoint vertex $b$ for $T$. 

\item Choose a set of arcs $r_i, i=1, \ldots, n$ from $b$ to the elements of $V$ as in \S 2.8 so that in addition the following hold: (i) no arc $r_i$ crosses an edge of $\GGG$ more than once; (ii) no arc $r_i$ crosses an edge incident to its terminal vertex.  

That this is possible follows easily by the greedy algorithm.  Initialization: let $F$ be the face of $\GGG$ containing $b$; join $b$ to each vertex on $\bdry F$.  Induction: suppose there exists a face $F$ not all of whose vertices have been visited.  There exists a closest such face to $b$, as measured by the edge-path length in $T$.  Choose one such face $F$, run from $b$ to $F$ along edges of $T$, and then run to the unique vertex on $\bdry F$ which is not yet visited and which is first in the counterclockwise ordering of these vertices, starting from the vertex immediately to the right of the traversed edge-path in $T$ as it enters $F$.   

Label these arcs $r_i$ in the usual counterclockwise cyclic fashion, and label their terminal points $v_i$.  

\item Next, label the remaining preimages of the basepoint $b_2, \ldots, b_d$ as follows.

The map $f$ is obtained by ``blowing up'' the edges $e_j$ to disks $D_j$, where $f: D_j \to S^2-e_j$ is a homeomorphism.  Thus each $D_j$ contains a unique element $b_j \in f^{-1}(b)$.  We may assume, for convenience of representation, that $b_j$ is drawn as a midpoint of $e_j$.  We may identify $f^{-1}(b) - \{b\}$ with the set of edges of $\GGG$.  Set $b_1=b$.  
For each $v_i, i=1, \ldots, n$, do the following. Examine the neighbors of $v_i$.  Find the neighbor with smallest index, $j$, for which there exists a nonempty set $E$ of unlabelled edges $e$ joining $v_i$ and $v_j$; if no such neighbor exists, proceed to $v_{i+1}$.  Otherwise, the set $E \union \{r_i\}$ has a total order, given by the counterclockwise ordering in which they are incident to $v_i$, with $r_i$ as the least element.  Label the elements of $E$ consecutively, with increasing indices on the labels  $b_k$'s.

The proof of the following theorem uses the way in which the arcs $r_i$ were chosen, but not the way in which the labels of edges (preimages of $b$) were assigned.  To describe the result, we imagine that the subset of the arcs $r_i$ which join the basepoint $b$ to vertices $w$ on the boundary of the face containing $b$ are ``labelled'' by the symbol $1$, and for the purposes of describing the monodromy, we imagine that these arcs are new ``edges'' incident to such vertices $w$.  See Figures 4 and 8 for a clarifying example.  

\begin{theorem}
\label{thm:simple_monodromy}
The monodromy induced by $g_i$ is the pure cycle consisting of the indices of the edge labels incident to $v_i$, with the counterclockwise cyclic order.
\end{theorem}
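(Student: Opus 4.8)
The plan is to read off the monodromy directly from the explicit blow-up description of $f=f_\GGG$, exploiting the combinatorial control over lifts of paths that properties (i) and (ii) of the chosen arcs $r_i$ provide. First I would recall the relevant local structure. Blowing up the edges of $\GGG$ presents the domain sphere as $\Delta\cup\bigcup_e D_e$, where $\Delta$ is $S^2$ cut along all of $\GGG$ (a disjoint union of closed faces) and $D_e$ is the disk inserted along the edge $e$, with $f|_{D_e}\colon D_e\to S^2\setminus e$ a homeomorphism and $f|_\Delta$ the quotient map re-gluing the cuts. Accordingly $f^{-1}(b)$ is canonically identified with the basepoint $b=b_1$ (in the identity copy of the face $F$ containing $b$) together with the centers $b_e\in D_e$ of the inserted disks, one for each edge of $\GGG$ --- precisely the identification used by the algorithm. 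The basic local fact is that each vertex $v_i$ is a fixed critical point of $f$ of local degree $1+m_i$ (its valence plus one) whose only critical preimage is itself, every other preimage being unramified --- here one uses that all critical points of $f_\GGG$ are fixed. Consequently the cycle type of the monodromy $\sigma(g_i)$ of the peripheral generator $g_i$ is forced: the partition of $d$ over the single critical value $v_i$ is $[1+m_i,1,\dots,1]$, so $\sigma(g_i)$ is a single cycle of length $1+m_i$ fixing all remaining sheets, i.e.\ a pure cycle of the asserted length (the $m_i$ incident edges together with one extra ``fake-edge'' slot, carrying label $1$ when $v_i\in\partial F$).

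Second, I would determine which sheets the cycle moves. Write $g_i=r_i*\ell_i*\overline{r}_i$, where $\ell_i$ is a small positively oriented loop around $v_i$ based at the terminal point $q_i$ of $r_i$. In the local model $w=z^{1+m_i}$ at $v_i$, lifting $\ell_i$ cyclically rotates, by one step in the counterclockwise order, the $1+m_i$ preimages of $q_i$ that cluster at $v_i$; and the lift of $g_i$ from a given sheet is nontrivial exactly when the lift of $r_i$ from that sheet ends at one of these clustered points. So the problem reduces to locating, for each preimage of $b$, the endpoint of its $r_i$-lift, and this is exactly what properties (i) and (ii) control. (a) Since $r_i$ never crosses an edge $e$ incident to $v_i$ (property (ii)), $r_i\subset S^2\setminus e$, so the $r_i$-lift starting at $b_e$ stays inside $D_e$ and ends at the clustered preimage of $q_i$ at the $v_i$-corner of $D_e$; thus each edge at $v_i$ contributes one sheet to the cycle. (b) The $r_i$-lift starting at $b_1$ is $r_i$ itself, routed through the chain of inserted disks dictated by its edge crossings; chasing these crossings (each edge crossed at most once, by property (i)) shows the lift terminates at a clustered preimage of $q_i$ precisely when $r_i$ approaches $v_i$ from within the face $F$, i.e.\ when $v_i\in\partial F$ --- this is the meaning of the ``fake edge'' labelled $1$. (c) For an edge $e'$ not incident to $v_i$, the same crossing analysis (again using property (i)) shows the $r_i$-lift from $b_{e'}$ does not reach a clustered point, so $b_{e'}$ is fixed by $\sigma(g_i)$.

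Third, I would pin down the cyclic order. Around $v_i$ in the domain the $1+m_i$ clustered local sheets occur in a definite counterclockwise cyclic order: the $v_i$-corners of the disks $D_e$ for the edges $e$ at $v_i$, interleaved with the identity copies of the sectors cut out between consecutive edges (one such sector containing $q_i$ when $v_i\in\partial F$). Since $\ell_i$ winds once positively around the degree-$(1+m_i)$ point $v_i$, $\sigma(g_i)$ carries each clustered sheet to the next one counterclockwise. Transporting this order back to the labels through the $r_i$-lifts of (a)--(c) --- and using property (ii) so that near $v_i$ the arc $r_i$ lies inside a single sector and does not perturb the cyclic arrangement of the incident edges --- produces exactly the counterclockwise cyclic order of the edge-labels incident to $v_i$, with the label $1$ inserted in its sector when $v_i\in\partial F$. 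Combining this with (a)--(c) proves the theorem.

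The technical heart, and the main obstacle, is the crossing analysis in step two, parts (b) and (c): formulating in a clean, coordinate-free way the rule by which the lift of a path transitions between the identity copies of faces and the inserted disks $D_e$ each time it crosses an edge of $\GGG$, and then following the (possibly long) chain of crossings made by $r_i$ to confirm that the lift from $b_1$ ends clustered at $v_i$ if and only if $v_i\in\partial F$, while the lifts from the $b_{e'}$ with $e'$ not at $v_i$ do not. Checking that properties (i) and (ii) are precisely the hypotheses that make this chase land on the asserted sheet is where the real work lies; by contrast the cycle-type count of step one and the cyclic-order assertion of step three are routine once the local model and the crossing rule are in hand.
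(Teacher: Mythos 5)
Your overall framework---decomposing the domain of $f_\GGG$ into the identity copies of the faces together with the inserted disks $D_e$, reducing the monodromy to locating the endpoints of the $r_i$-lifts, and reading the cyclic order from the local model $w\mapsto w^{1+m_i}$ at $v_i$---is the right one, and it is essentially what the paper does: the paper offers no formal proof beyond the worked example of \S\ref{secn:wr_computation} (Figures 5--8), so your steps one, (a), (b) and three are a reasonable formalization of it. The genuine gap is part (c) of your second step, and it is not a removable technicality: it makes your conclusion contradict your own cycle-length count from step one whenever $v_i$ does not lie on the boundary of the face $F$ containing $b$. The correct crossing rule is that the lift of a path enters $D_{e'}$ when the path crosses $e'$ and the lift is currently in an identity copy, but \emph{exits} $D_{e'}$ into an identity copy when the path crosses $e'$ and the lift is currently inside $D_{e'}$. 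So if $e^{(1)},\dots,e^{(k)}$ are the edges crossed by $r_i$ in order (each exactly once by (i), none incident to $v_i$ by (ii)), the $r_i$-lift from $b_{e^{(k)}}$ remains in $D_{e^{(k)}}$ until the last crossing, exits there into the identity copy, crosses no further edges, and terminates at the clustered preimage $q_i$ itself. Hence $b_{e^{(k)}}$ \emph{is} moved by $\sigma(g_i)$ even though $e^{(k)}$ is not incident to $v_i$; it is this sheet, not $b_1$, that fills the extra slot of the $(1+m_i)$-cycle when $v_i\notin\partial F$. The paper's own data exhibits this: in example {\bf 33211b} the vertex $v_3$ is interior to the triangle, its unique incident edge carries label $3$, and yet $\sigma(g_3)=(53)$, where $5$ labels the triangle edge joining $v_2$ and $v_4$ that $r_3$ must cross to reach $v_3$---a direct counterexample to your claim (c).

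To repair the argument you need only push your crossing analysis one step further: the fixed sheets of $\sigma(g_i)$ are the $b_{e'}$ with $e'$ neither incident to $v_i$ nor equal to the last edge crossed by $r_i$ (their lifts end at the simple preimage of $q_i$ inside $D_{e^{(1)}}$, $D_{e^{(j+1)}}$, or $D_{e'}$ as the case may be), together with $b_1$ when $v_i\notin\partial F$; the cycle then consists of the labels of the edges at $v_i$ plus either $1$ or the label of $e^{(k)}$, inserted into the counterclockwise order at the sector from which $r_i$ terminally approaches $v_i$. Note that the theorem's stated convention (arcs to vertices on $\partial F$ are fake edges labelled $1$) likewise only describes the boundary case, so a complete proof must either restrict to that case or record, as above, which existing label occupies the extra slot for interior vertices.
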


\item Next, choose connecting arcs as follows.  Set $\lambda_1$ to be the constant path at $b_1=b$.  Given $j>1$, the edge $e_j$ is incident to two vertices $v_k, v_\ell$.  Let $i=\max\{k, \ell\}$, and choose $\lambda_j$ so that it follows $r_i$ to $v_i$, loops around $v_i$ counterclockwise in the direction of $g_i$ until it reaches $e_j$, and then follows $e_j$ to $b_j$.  

\end{enumerate}

\[
                \begin{tikzpicture}[scale = 1]
                \node [draw, circle, fill=gray!8]  (v1) at (0,4) {$v_1$};
                \node [draw, circle, fill=gray!8] (v2) at (0,0){$v_2$};
                \node [draw, circle, fill=gray!8] (v3) at (2,-3.5){$v_3$};
                \node [draw, circle, fill=gray!8] (v4) at (-2,-3.5) {$v_4$};
                \node (b1) at (4, -3) {$\blacksquare$};
                
                \draw (b1) node [right]{~$b$};
                \draw (v1) -- node[right]{$e_2$}  (v2);
                \draw (v2) -- node[left]{$e_5$} (v4);
                \draw (v4) -- node [below]{$e_6$}(v3);
                \draw (v2) to [bend right = 10] node[left]{$e_3$} (v3);
                \draw (v2) to [bend left = 10] node [right]{$e_4$}(v3);
                \draw [blue, ->] (b1) to [bend right = 15] node[right]{$g_1$}(0.495, 3.505) arc (315: 655: .7);
                \draw [blue, ->] (b1) to [bend right = 30] node [below]{$g_2$} (0.7, 0) arc(0: 345:.7cm);
                \draw [blue, ->] (b1) to [bend left = 5] node [above]{$g_3$}(2.7, -3.5) arc (0: 345: .7cm);
                \draw [blue, ->] (b1) to [bend left = 45] node [below]{$g_4$} (-1.505, -3.995) arc (315: 655: .7cm);
                \end{tikzpicture}
                \]
\begin{center}{\sf Figure 4.  Choosing the arcs $r_1, \ldots, r_4$ and the preimage (edge) labels $e_2, \ldots, e_6$.} 
\end{center}

\subsection*{A worked example}  Consider the multigraph, connecting arcs, and edge-labels as chosen in Figure 4.  
Figure 5 shows the preimages of the edges and basepoints under the corresponding branched covering, labelled by their images, and with the generators (but not their preimages) superimposed.  \\

                          \[  
                          \begin{tikzpicture}  [scale=1.3]
                            \node[draw, circle, fill=gray!8] (p1) at (0,4) {$v_1$};
                            \node [draw, circle, fill=gray!8](p2) at (0,0){$v_2$};
                            \node [draw, circle, fill=gray!8] (p3) at (2,-3.5){$v_3$};
                            \node [draw, circle, fill=gray!8](p4) at (-2,-3.5) {$v_4$};
                            
                            \node(b1) at (4,-3) {$\blacksquare$};
                            \node(b2) at (0,2.7) {$\blacksquare$};
                            \node(b5) at (-1.2,-2) {$\blacksquare$};
                            \node(b6) at (-.5,-3.7) {$\blacksquare$};
                            \node(b3) at (.6, -2.2) {$\blacksquare$};
                            \node(b4) at (1.7,-1.85) {$\blacksquare$};
                            
                            \draw (4,-3) node [right]{$~b_1$};
                            \draw (0,2.7) node [right]{$~b_2$};
                            \draw (-1.2,-2) node [right]{~$b_5$};
                            \draw (-.5,-3.7) node [right]{$~b_6$};
                            \draw (.6,-2.2) node [right]{$~b_3$};
                            \draw (1.7,-1.85) node [right]{$~b_4$};

                            \node (p5) at (-.3, 1.8){$\bullet$};
                            \node (p6) at (.3, 1.8){$\bullet$};
                            \draw (p2) to [bend left = 3] node[left]{$e_4$}  (-.3, 1.8) node [left]{$v_3$};
                            \draw (p2) to [bend right = 3] node[right]{$e_3$}  (-.3, 1.8);
                            \draw (-.3,1.8) -- node[above]{$e_6$}  (.3, 1.8) node[right]{$v_4$} -- node[right]{$e_5$} (p2);
                            \draw (p1) to [bend left=40] node [right]{$e_2$} (p2);
                            \draw (p2) to [bend left = 40] node [left]{$e_2$} (p1);
                            
                            \draw (p2) to [bend left=12] node[right]{$e_5$}(p4);
                            \draw (p4) to [bend left=50] node[left]{$e_5$} (p2);
                            \node (p7) at (-1.7, -1.6){$\bullet$};
                            \node (p8) at (-.7, -1.3){$\bullet$};
                            \draw (p2) to [bend right = 15] node [above]{$e_3~$} (-1.7, - 1.6);
                            \draw (p2) to [bend right= 11] node[below]{$~e_4$} (-1.7, -1.6) node [right]{$v_3$};
                            \draw (-1.7, -1.6) to [bend right= 13]  node[right]{$e_6$}(p4);
                            \draw (p2) --node[left]{$e_2$} (-.7, -1.3) node [below]{$v_1$};
                            
                            \draw (p4) to [bend left=10] node [above]{$e_6$}(p3);
                            \draw (p3) to [bend left=50] node [below] {$e_6$}(p4);
                            \node (p9) at (0, -4.1){$\bullet$};
                            \node (p10) at (0, -3.5){$\bullet$};
                            \draw (p4) to [bend right=12] node [above]{$e_5$} (0,-4.1) node [below]{$v_2$};
                            \draw (0,-4.1) to [bend right=11] node[above]{$e_4$} (p3);
                            \draw (0, -4.1) to [bend right = 16] node [below] {$e_3$} (p3);
                            \draw (0,-4.1) -- node[right]{$e_2$} (0,-3.5) node [right]{$v_1$};
                            
                            \draw (p3) to [bend left=45] node [left]{$e_3$} (p2);
                            \draw (p3) to [bend right = 11] node [left]{$e_3$}(p2);
                            \draw (p3) to [bend right=17] node [right]{$e_4$} (p2);
                            \node (p11) at (.8, -1.75){$\bullet$};
                            \node (p12) at (.3, -1.5){$\bullet$};
                            \draw (p2) to [bend right=3] node[right]{$e_5$} (.8, - 1.75) node[right]{$v_4$};
                            \draw (.8, -1.75) to [bend right = 3] node [right]{$e_6$} (p3);
                            \draw (p2) -- node [left]{$e_2$} (.3, -1.5) node [below]{$v_1$};
                            
                            \draw (p3) to [bend right=70] node[right]{$e_4$} (p2);
                            \node(p13)at (1.1, -1){$\bullet$};
                            \node(p14)at (1.9, -1.3) {$\bullet$};
                            \draw (p2) -- node[right]{$e_2$} (1.1,-1) node [right]{$v_1$};
                            \draw (p2) to [bend left=20] node [right]{$e_5$} (1.9, -1.3) node [right]{$v_4$};
                            \draw (1.9, -1.3) to [bend left=20] node [left]{$e_6$} (p3);
                            
                            \draw [blue, ->] (b1) to [bend right = 35] node[right]{$g_1$}(.433, 3.75) arc (330: 680: .5);;
                            \draw [blue, ->] (b1) to [bend right = 40] node [below]{$g_2$} (.4698, .171)arc(20: 360:.5cm);
                            \draw [blue, ->] (b1) to [bend left = 5] node [above]{$g_3$}(2.48296, -3.37059) arc (15: 360: .5cm);
                            \draw [blue, ->] (b1) to [bend left = 60] node [below]{$g_4$} (-1.829, -3.9698) arc (290: 630: .5cm);
                            \end{tikzpicture}
                          \]
\begin{center}{\sf Figure 5.  Preimages of edges and of the basepoint $b$.}\end{center}  

Consider the monodromy induced by $g_4$.  The local pictures, gleaned from the previous figure, are as follows.  
Near the branch point $v_4$ circled by $g_4$, we see a cycle:

        \[
        \begin{tikzpicture}[scale = 1]
        
        \node [draw, circle, fill = gray!8] (v4) at (0,0) {$v_4$};

        
        \node (b1) at (-2,-2) {$\blacksquare$};
        \node (b5) at (-.4, 2) {$\blacksquare$};
        \node (b6) at (2.4, 0) {$\blacksquare$};
        \node at (-2,-2) [right] {$~b_1$};
        \node at (-.4,2) [right] {$~b_5$};
        \node at (2.4,0) [right] {$~b_6$};
        
        \node (e51) at (0,4.6) {};
        \node (v3) at (-1.4, 2.8){$\bullet$};
        \node (e52) at (-3,4){};
        \node (e3) at (-2, 4.6){};
        \node (e4) at (-1.6, 4.4){};

        
        \draw (v4) to [bend left = 5] node [left] {$e_6$} (-1.4, 2.8) node [left] {$v_3$};
        \draw (v4) to [bend left = 20]  node [left] {$e_5$} (e52);
        \draw (v4) to [bend right = 10] node [right] {$e_5$}(e51);
        \draw (-1.4, 2.8) to [bend left = 6] node [left] {$e_3$} (e3);
        \draw (-1.4, 2.8) to [bend left = 1] node [right] {$e_4$} (e4);
        
        \node (e61) at (4,1) {};
        \node (e62) at (3, -3) {};
        \node (v2) at (4.4,-1.6) {$\bullet$};
        \node (v1) at (4.8,0) {$\bullet$};
        \node (e50) at (6, -2) {};

        
        \draw (v4) to [bend left = 15] node [above] {$e_6$} (e61);
        \draw (v4) to [bend right = 5]  node [below] {$e_5$} (4.4, -1.6) node [below] {$v_2$};
        \draw (4.4, -1.6) to [bend right = 1] (e50);
        \draw (4.4, -1.6) -- node [right]{$e_2$} (4.8,0) node [right] {$v_1$};
        \draw (v4) to [bend right = 20] node [below] {$e_6$} (e62);

        
        \draw [blue, ->] (-1.6,-1.8) -- node [right] {$~\tilde{g}_4[b_1]$} (-.3, -.5196) arc (240 : 350 : .6 cm) -- (2.1, -.1);
        \draw [red, ->] (2.1, .1) -- node [above, near start] {$~\tilde{g}_4[b_6]$} (.590885, .104189) arc (10 : 90 : .6 cm) -- (-.25, 1.8);
        \draw [green, ->] (-.47, 1.8) -- (-.205212, .563816) arc (110 : 220 : .6cm) -- node [left] {~$\tilde{g}_4[b_5]~$}(-1.85, -1.7);
        
        \end{tikzpicture}
        \]
\begin{center}{\sf Figure 6.  A cycle in the monodromy induced by $g_4$.}\end{center}
\gap
            
For elements $b_j$ in $f^{-1}(b)$ which correspond to edges $e_j$ not adjacent to $v_4$, we see that the interior of $D_j$ contains a preimage copy of $v_4$, and thus the lift $f^{-1}(g_4)[b_j]$ of $g_4$ based at $b_j$  circles this copy of $v_i$ and returns to $b_j$ within $D_j$ so that $j^{g_4}=j$:

\resizebox{!}{152pt}{
            \begin{tikzpicture}[scale = .9]
            
            \node [draw, circle, fill = gray!8] (v2) at (-2,-2) {$v_2$};
            

            \node (v11) at (-4,2) {};
            \node (v12) at (0,2) {};
            
            \node (v3) at (-2.9,1) {$\bullet$};
            \node (v4) at (-1.1,1) {$\bullet$};

            
            \node (b) at (-1.9, 2.5) {$\blacksquare$};
            \node at (-1.9,2.5) [right] {$~b_2$};

            
            \draw (v2) to [bend left = 20] node [left] {$e_2$} (v11);
            \draw (v2) to [bend right = 20] node [right] {$e_2$} (v12);
            \draw (v2) to [bend left = 5] node [left] {$e_4$} (-2.9,1);
            \draw (v2) to [bend right = 5] node [right] {$e_3$} (-2.9,1);
            \draw (v2) -- (-1.1,1) node [right] {$v_4$};
            \draw (-2.9,1) node [left] {$v_3$} -- node [above] {$e_6$} (-1.1,1);

            
            \draw [blue, ->] (-1.85, 2.2) -- (-1.4, 1.4) arc (135 : 480: .5cm) -- node [right] {$\tilde{g}_4[b_2]$} (-1.75, 2.3);

            
            \node [draw, circle, fill = gray!8] (vvv2) at (2.5,2.5) {$v_2$};
            \node [draw, circle, fill = gray!8] (vvv3) at (5.5,-2.5) {$v_3$};
            
            
            \node (vvv1) at (2.6, .5) {$\bullet$};
            \node (vvv4) at (4.5, .2) {$\bullet$};
            
            
            \draw (vvv2) --  node [right] {$e_2$} (2.6,.5) node [right] {$v_1$};
            \draw (vvv2) to [bend left = 6] node [left]{$e_5$} (4.5,.2) node [right] {$v_4$} to [bend left = 6] node [left] {$e_6$} (vvv3);

            \node (bbb3) at (3.5, -1) {$\blacksquare$};
            \node at (3.5,-1)  [right] {$~b_3$};
            
            
            \draw (vvv2) to [bend left = 40] node [right] {$e_3$} (vvv3);
            \draw (vvv2) to [bend right = 65]  node [left] {$e_3$} (vvv3);
            
            
            \draw [blue, ->] (3.75, -.8) -- (4.25, -.23302) arc (240 : 585: .5 cm) -- node [near start, left] {$\tilde{g}_4[b_3]$} (3.65, -.7);
            
            \node [draw, circle, fill = gray!8] (vv2) at (6.5,2.5) {$v_2$};
            \node [draw, circle, fill = gray!8] (vv3) at (9.5,-2.5) {$v_3$};
            
            
            \node (vv1) at (8.3, 1) {$\bullet$};
            \node (vv4) at (10.5, 1.3) {$\bullet$};
            \node (ex) at (11.5, 1.5) {};
            
            
            \draw (vv2) --  node [right] {$e_2$} (8.3, 1) node [right] {$v_1$};
            \draw (vv2) to [bend left = 35] node [below]{$e_5$} (vv4) node [right] {$v_4$} to [bend left = 35] node [left] {$e_6$} (vv3);

            \node (bb3) at (9.3, -.2) {$\blacksquare$};
            \node at (9.3, -.2) [right] {$~b_4$};

            
            \draw (vv2) to [bend left = 50] (ex) node [right] {$e_4$} to [bend left = 50] (vv3);
            \draw (vv2) to [bend left = 10]  node [left] {$e_4$} (vv3);
            
            
            \draw [blue, ->] (9.5, -.1) -- (10.25, .867) arc (240 : 585 : .5cm) -- node [left] {$\tilde{g}_4[b_4]$} (9.4, 0);
            
            \end{tikzpicture}}
\begin{center}{\sf Figure 7.  Fixed-points in the monodromy induced by $g_4$.}\end{center} 
         
\newpage   
Thus the monodromy induced by $g_4$ is given by the pure cycle $(165)$.   The calculation of the remainder of the monodromy is similar, yielding:

            \[\begin{tikzpicture}[scale = .7]
            \node [draw, circle, fill=gray!8]  (v1) at (0,4) {$v_1$};
            \node [draw, circle, fill=gray!8] (v2) at (0,0){$v_2$};
            \node [draw, circle, fill=gray!8] (v3) at (2,-3.5){$v_3$};
            \node [draw, circle, fill=gray!8] (v4) at (-2,-3.5) {$v_4$};
            \node (b1) at (4, -3) {$\blacksquare$};
            
            \draw (0,4) circle  (.7cm);
            \draw (0,0) circle  (.7cm);
            \draw (2,-3.5) circle (.7cm);
            \draw (-2, -3.5) circle (.7cm);
            
            \node at (0,4.65) [above] {$\sigma(g_1) = (12)$};
            \node at (-.6,.5) [left] {$\sigma(g_2) = (12534)$};
            \node at (3, -4.3) [right] {$\sigma(g_3) =  (1436)$};
            \node at (-2.7, -3.2) [left] {$\sigma(g_4) =  (165)$};

            \draw (b1) node [right]{~$b$};
                \draw (v1) -- node[right]{$e_2$}  (v2);
                \draw (v2) -- node[left]{$e_5$} (v4);
                \draw (v4) -- node [below]{$e_6$}(v3);
                \draw (v2) to [bend right = 10] node[left]{$e_3$} (v3);
                \draw (v2) to [bend left = 10] node [right]{$e_4$}(v3);
                \draw [blue] (b1) to [bend right = 15] node[right]{$e_1$}(v1);
                \draw [blue] (b1) to [bend right = 30] node [below]{$e_1$} (v2);
                \draw [blue] (b1) to [bend left = 5] node [above]{$e_1$}(v3);
                \draw [blue] (b1) to [bend left = 45] node [below]{$e_1$} (v4);
            \end{tikzpicture}\]
\begin{center}{\sf Figure 8.  The monodromy of $f$.}\end{center}

\newpage

Figure 9 shows the result of our algorithm for constructing connecting paths.  
\begin{center}
\resizebox{!}{360pt}{
        \begin{tikzpicture}  [scale=1.5]
        \node[draw, circle, fill=gray!8] (p1) at (0,4) {$v_1$};
        \node [draw, circle, fill=gray!8](p2) at (0,0){$v_2$};
        \node [draw, circle, fill=gray!8] (p3) at (2,-3.5){$v_3$};
        \node [draw, circle, fill=gray!8](p4) at (-2,-3.5) {$v_4$};
        
        \node(b1) at (4,-3) {$\blacksquare$};
        \node(b2) at (0,2.7) {$\blacksquare$};
        \node(b5) at (-1.2,-2) {$\blacksquare$};
        \node(b6) at (-.5,-3.7) {$\blacksquare$};
        \node(b3) at (.6, -2.2) {$\blacksquare$};
        \node(b4) at (1.7,-1.85) {$\blacksquare$};
        
        \draw (4,-3) node [right]{$~b_1$};
        \draw (0,2.7) node [right]{$~b_2$};
        \draw (-1.2,-2) node [right]{~$b_5$};
        \draw (-.5,-3.7) node [right]{$~b_6$};
        \draw (.6,-2.2) node [right]{$~b_3$};
        \draw (1.7,-1.85) node [right]{$~b_4$};

        \node (p5) at (-.3, 1.8){$\bullet$};
        \node (p6) at (.3, 1.8){$\bullet$};
        \draw (p2) to [bend left = 3] node[left]{$e_4$}  (-.3, 1.8) node [left]{$v_3$};
        \draw (p2) to [bend right = 3] node[right]{$e_3$}  (-.3, 1.8);
        \draw (-.3,1.8) -- node[above]{$e_6$}  (.3, 1.8) node[right]{$v_4$} -- node[right]{$e_5$} (p2);
        \draw (p1) to [bend left=40] node [right]{$e_2$} (p2);
        \draw (p2) to [bend left = 40] node [left]{$e_2$} (p1);
        
        \draw (p2) to [bend left=12] node[right]{$e_5$}(p4);
        \draw (p4) to [bend left=50] node[left]{$e_5$} (p2);
        \node (p7) at (-1.7, -1.6){$\bullet$};
        \node (p8) at (-.7, -1.3){$\bullet$};
        \draw (p2) to [bend right = 15] node [above]{$e_3~$} (-1.7, - 1.6);
        \draw (p2) to [bend right= 11] node[below]{$~e_4$} (-1.7, -1.6) node [right]{$v_3$};
        \draw (-1.7, -1.6) to [bend right= 13]  node[right]{$e_6$}(p4);
        \draw (p2) --node[left]{$e_2$} (-.7, -1.3) node [below]{$v_1$};
        
        \draw (p4) to [bend left=10] node [above]{$e_6$}(p3);
        \draw (p3) to [bend left=50] node [below] {$e_6$}(p4);
        \node (p9) at (0, -4.1){$\bullet$};
        \node (p10) at (0, -3.5){$\bullet$};
        \draw (p4) to [bend right=12] node [above]{$e_5$} (0,-4.1) node [below]{$v_2$};
        \draw (0,-4.1) to [bend right=11] node[above]{$e_4$} (p3);
        \draw (0, -4.1) to [bend right = 16] node [below] {$e_3$} (p3);
        \draw (0,-4.1) -- node[right]{$e_2$} (0,-3.5) node [right]{$v_1$};
        
        \draw (p3) to [bend left=45] node [left]{$e_3$} (p2);
        \draw (p3) to [bend right = 11] node [left]{$e_3$}(p2);
        \draw (p3) to [bend right=17] node [right]{$e_4$} (p2);
        \node (p11) at (.8, -1.75){$\bullet$};
        \node (p12) at (.3, -1.5){$\bullet$};
        \draw (p2) to [bend right=3] node[right]{$e_5$} (.8, - 1.75) node[right]{$v_4$};
        \draw (.8, -1.75) to [bend right = 3] node [right]{$e_6$} (p3);
        \draw (p2) -- node [left]{$e_2$} (.3, -1.5) node [below]{$v_1$};
        
        \draw (p3) to [bend right=70] node[right]{$e_4$} (p2);
        \node(p13)at (1.1, -1){$\bullet$};
        \node(p14)at (1.9, -1.3) {$\bullet$};
        \draw (p2) -- node[right]{$e_2$} (1.1,-1) node [right]{$v_1$};
        \draw (p2) to [bend left=20] node [right]{$e_5$} (1.9, -1.3) node [right]{$v_4$};
        \draw (1.9, -1.3) to [bend left=20] node [left]{$e_6$} (p3);
        
        
        \draw[red, very thick] (b1) to [bend right = 50] node[right]{$\lambda_2$} (.3,.1);
        \draw[red,->, very thick] (.3,.1) arc (18.43:90:.3cm) -- (b2);

        \draw[red,->, very thick] (b1) to [out=330,in=300,looseness=8] node [right]{$\lambda_1$} (b1);
        \draw[red, very thick] (b1) to [bend left = 5] node [below] {$\lambda_3$} (2.3, -3.4);
        \draw[red,->, very thick] (2.3, -3.4) arc (18.43:143:.3cm) -- (b3);
        
        \draw[red, very thick] (b1) to [bend left = 1] node [above] {$\lambda_4$} (2.4, -3.3);
        \draw[red,->, very thick] (2.4, -3.3) arc (26.57: 90: .45) -- (b4);
        
        \draw[red, very thick] (b1) to [bend left = 60] node [below] {$\lambda_5$} (-2.05, -3.85);
        \draw[red,->, very thick] (-2.05, -3.85) arc (261.87: 420 : .354cm) -- (b5);
        
        \draw[red, very thick] (b1) to [bend left = 55] node [above] {$\lambda_6$} (-1.8, -3.95);
        \draw[red,->, very thick] (-1.8, -3.95) arc (296.57: 350 : .45cm) -- (b6);

        \end{tikzpicture}
        }

{\sf Figure 9.  Choosing connecting paths.}
        \end{center}

\newpage
Since the disks $D_j$ are disjoint from the postcritical set, the connecting paths $\lambda_j$ may take any route to $b_j$ once it has entered $D_j$.  So we may represent the connecting paths by drawing them on the same codomain plane as the graph $\GGG$ .  See Figure 10.  \\


\begin{center}
\resizebox{!}{300pt}{
        \begin{tikzpicture}  [scale=1.25]
        \node[draw, circle, fill=gray!8] (v1) at (0,4) {$v_1$};
        \node [draw, circle, fill=gray!8](v2) at (0,0){$v_2$};
        \node [draw, circle, fill=gray!8](v3) at (2,-3.5){$v_3$};
        \node [draw, circle, fill=gray!8](v4) at (-2,-3.5) {$v_4$};
        
        \node(b1) at (4,-3) {$\blacksquare$};
        \node(b2) at (0,2.5) {$\blacksquare$};
        \node(b5) at (-1.31,-2.3) {$\blacksquare$};
        \node(b6) at (-.5,-3.5) {$\blacksquare$};
        \node(b3) at (1.01, -2.2) {$\blacksquare$};
        \node(b4) at (1.4,-2) {$\blacksquare$};
        
        \node (bb2) at (.076, 2.5){};
        \node (bb3) at (1.09, -2.2){};
        \node(bb4) at (1.5, -2){};
        \node (bb5) at (-1.25, -2.3){};
        \node (bb6) at (-.6, -3.56){};
        
        \draw (4,-3) node [right]{$~b_1$};
        \draw (0,2.5) node [right]{$~b_2$};
        \draw (-1.31,-2.3) node [right]{~$b_5$};
        \draw (-.5,-3.5) node [below]{$b_6~$};
        \draw (1.01,-2.2) node [left]{$b_3~$};
        \draw (1.4,-2) node [right]{$~b_4$};

        \draw (v1) -- node[left]{$e_2$} (v2);

        \draw (v2) -- node[left]{$e_5$}(v4);
        
        \draw (v4) -- node[above]{$e_6$}(v3);
        
        \draw (v2) to [bend right = 10] node [left]{$e_3$}(v3);
        
        \draw (v2) to [bend left = 10] node [right]{$e_4$}(v3);
        
        \draw[red, very thick] (b1) to [bend right = 50] node[right]{$\lambda_2$} (.3795,.1265);
        \draw[->, red, very thick] (.3795,.1265) arc (18.43:78:.4cm) -- (bb2);
        
        \draw[red, very thick] (b1) to [bend left = 5] node [below] {$\lambda_3$} (2.3795, -3.4265);
        \draw[->, red, very thick] (2.3795, -3.4265) arc (18.43:118:.4cm) to [bend left = 3] (bb3);
        
        \draw[red, very thick] (b1) to [bend left = 1] node [above] {$\lambda_4$} (2.4, -3.3);
        \draw[->, red, very thick] (2.4, -3.3) arc (26.57: 100: .45) to [bend right=3] (bb4);
        
        \draw[red, very thick] (b1) to [bend left = 60] node [below] {$\lambda_5$} (-2.05, -3.85);
        \draw[->, red, very thick] (-2.05, -3.85) arc (261.87: 410 : .354cm) -- (bb5);
        
        \draw[red, very thick] (b1) to [bend left = 55] node [above] {$\lambda_6$} (-1.8, -3.95);
        \draw[->, red, very thick] (-1.8, -3.95) arc (296.57: 357 : .45cm) -- (bb6);

        \end{tikzpicture}}

{\sf Figure 10.  Representation of the connecting paths via paths in the codomain.}  
        \end{center}

We now compute the corresponding wreath recursion $\Phi$ for this example, choice of generators, labels of preimages, and choice of connecting paths.   For brevity, we  give only the computation of the wreath position $g_2|_5$ and informally describe several helpful observations. \\

Let us compute $g_2|_5$.  Note first that $5^{g_2}=3$.   We express the loop $[\lambda_5 * \tilde{g}_2[b_5] * \overline{\lambda}_3]$ as a word in the generators.   Here, $\tilde{g}_1[b_5]$ is the lift under $f$ of a loop at $b$ representing $g_1$ starting at $b_5 \in f^{-1}(b)$.

\begin{center}
\resizebox{!}{340pt}{
        \begin{tikzpicture}  [scale=1.5]
        \node[draw, circle, fill=gray!8] (v1) at (0,4) {$v_1$};
        \node [draw, circle, fill=gray!8](v2) at (0,0){$v_2$};
        \node [draw, circle, fill=gray!8](v3) at (2,-3.5){$v_3$};
        \node [draw, circle, fill=gray!8](v4) at (-2,-3.5) {$v_4$};
        
        \node(b1) at (4,-3) {$\blacksquare$};
        \node(b2) at (0,2.5) {$\blacksquare$};
        \node(b5) at (-1.31,-2.3) {$\blacksquare$};
        \node(b6) at (-.5,-3.5) {$\blacksquare$};
        \node(b3) at (1.01, -2.2) {$\blacksquare$};
        \node(b4) at (1.4,-2) {$\blacksquare$};
        
        \node (bb2) at (.076, 2.5){};
        \node (bb3) at (1.09, -2.2){};
        \node(bb4) at (1.5, -2){};
        \node (bb5) at (-1.25, -2.3){};
        \node (bb6) at (-.6, -3.56){};
        
        \draw (4,-3) node [right]{$~b_1$};
        \draw (0,2.5) node [right]{$~b_2$};
        \draw (-1.31,-2.3) node [right]{~$b_5$};
        \draw (-.5,-3.5) node [below]{$b_6~$};
        \draw (1.01,-2.2) node [left]{$b_3~$};
        \draw (1.4,-2) node [right]{$~b_4$};
        
        \draw (v1) -- node[left]{$e_2$} (v2);
        \draw (v2) -- node[left]{$e_5$}(v4);
        \draw (v4) -- node[above]{$e_6$}(v3);
        \draw (v2) to [bend right = 10] node [left]{$e_3$}(v3);
        \draw (v2) to [bend left = 10] node [right]{$e_4$}(v3);
        
        \draw [very thin] (-2, -3.73) arc (270:420:.23cm) -- (-.115, -0.19918584287042088875565632927318) arc (240: 290 : .23cm) to [bend right = 10] (1.857365, -3.317365) arc (130 : 0 : .23cm) to [bend right = 5] (4, -3) to [bend left = 60] (-2, -3.73) -- cycle [fill = darkgray, fill opacity=0.4];
        
        \draw [very thick, ->, red] (b1) to [bend left = 57] node [below] {$\lambda_5$}(-2, -3.73) arc (270:420:.23cm) -- (b5);
        \draw [very thick, ->, blue] (b5) -- node [left]{$\tilde{g}_{2_{b_5}}$}(-.115, -0.19918584287042088875565632927318) arc (240: 290 : .23cm) to [bend right = 5] (b3);
        \draw [very thick, ->, red] (b3) to [bend right = 4] (1.857365, -3.317365) arc (130 : 0 : .23cm) to [bend right = 5] node [above]{$\lambda_3$} (b1);
        \end{tikzpicture}}

{\sf Figure 11.  Computation of $g_2|_5$.}  
        \end{center}

By examining the enclosed region (shaded in Figure 11), we see that $\lambda_5 * \tilde{g}_{2_{b_5}} * \overline{\lambda}_{5^{\sigma(g_2)}}$ is homotopic to $g_3^{-1}$ and thus conclude $g_2|_5 = g_3^{-1}$.\\

Performing this computation for each wreath position $g_i |_j$, $1 \leq i \leq 4, 1 \leq j \leq 6$, we notice several striking patterns.  First, if $j$ is fixed by the monodromy $\sigma(g_i)$ then $g_i|_j$ is trivial.  The intuition is latent in the procedure we have described: if $v_k$ is the vertex of higher degree attached to the edge $e_j$, then $\lambda_j$ follows $g_k$ to $e_j$.  Since $j$ is fixed by the monodromy, the lift $\tilde{g}_i[b_j]$ stays within the disk obtained by blowing up $e_j$ and thus $\overline{\lambda}_{j^{g_i}} = \overline{\lambda}_j$ traverses $\lambda_j$ backwards, yielding a path homotopic to the constant map at $b$.  Together with our computation $g_2|_5 = g_3^{-1}$, this gives us 10 of the 24 wreath positions:
        \begin{align*}
        \Phi(g_1) &  = \langle ~~, ~~, ~1, ~1, ~1, ~1 \rangle (12)\\
        \Phi(g_2) &  = \langle ~~, ~~, ~~, ~~, ~g_3^{-1}, ~1\rangle (12534)\\
        \Phi(g_3) &  = \langle ~~, ~1, ~~, ~~, ~1, ~~  \rangle (1436)\\
        \Phi(g_4) &  = \langle ~~, ~1, ~1, ~1, ~~, ~~ \rangle (165)
        \end{align*}
The second observation we make is that if $j':=j^{g_i}\neq j$, if edges $e_j, e_{j'}$  are not parallel, and if the index of $v_i$ (at which both $e_j, e_{j'}$ are both incident) is largest among the three vertices comprised by the endpoints of $e_j$ and $e_j'$, then  the wreath position $g_i|_j$ is trivial.  In this case $\lambda_i$ follows $g_i$ to the edge $e_j$ and runs up $e_j$ to $b_j$.  Then the lift $\tilde{g}_i[b_j]$ travels back down $e_j$ to $g_i$, follows $g_i$ to $e_{j^{g_i}}$ and runs up $e_{j^{g_i}}$ to $b_{j^{g_i}}$.  Then since $v_i$ is the vertex of higher index attached to $e_{j^{g_i}}$, $\overline{\lambda}_{j^{g_i}}$ goes back down $e_{j^{g_i}}$ to $g_i$ and follows $g_i$ out backwards to $b$.  Thus $g_i|_j$ encircles no elements of $V$ and is therefore homotopic to the constant map.  This gives us five more wreath positions.
        \begin{align*}
        \Phi(g_1) &  = \langle ~~, ~~, ~1, ~1, ~1, ~1 \rangle (12)\\
        \Phi(g_2) &  = \langle ~1, ~~, ~~, ~~, ~g_3^{-1}, ~1\rangle (12534)\\
        \Phi(g_3) &  = \langle ~1, ~1, ~~, ~1, ~1, ~~  \rangle (1436)\\
        \Phi(g_4) &  = \langle ~1, ~1, ~1, ~1, ~~, ~1 \rangle (165)
        \end{align*}
Now we fill in the rest of the tabl1, encouraging the reader to check our work using the processes described above.
        \begin{align*}
        \Phi(g_1) &  = \langle ~g_1, ~1, ~1, ~1, ~1, ~1 \rangle (12)\\
        \Phi(g_2) &  = \langle ~1, ~g_2g_3, ~1, ~1, ~g_3^{-1}, ~1\rangle (12534)\\
        \Phi(g_3) &  = \langle ~1, ~1, ~g_3, ~1, ~1, ~1  \rangle (1436)\\
        \Phi(g_4) &  = \langle ~1, ~1, ~1, ~1, ~g_4, ~1 \rangle (165)\\\\
        \end{align*}

\newcommand{\etalchar}[1]{$^{#1}$}
\def\cprime{$'$}

%

\end{document}